
\documentclass[11pt,twoside,a4paper]{amsart}

\usepackage{amssymb}
\usepackage{vmargin}
\usepackage{amscd}
\usepackage{stmaryrd}
\usepackage{mathrsfs}
\usepackage[all]{xy}
\usepackage{xr}
\usepackage{hyperref}
\hypersetup{colorlinks,
linkcolor=black,
citecolor=black,
urlcolor=blue}


\setmargins{32mm}{20mm}{14.6cm}{22cm}{1cm}{1cm}{1cm}{1cm}

 \setlength{\unitlength}{0.8cm}

\setcounter{tocdepth}{3}

\newcommand\da{\!\downarrow\!}

\newcommand\la{\leftarrow}

\newcommand\id{\mathrm{id}}

\newcommand\ten{\otimes}

\newcommand\eps{\epsilon}

\renewcommand\H{\mathrm{H}}
\newcommand\z{\mathrm{Z}}

\newcommand\N{\mathbb{N}}
\newcommand\Z{\mathbb{Z}}
\newcommand\Q{\mathbb{Q}}

\newcommand\R{\mathbb{R}}

\newcommand\bG{\mathbb{G}}

\newcommand\bL{\mathbb{L}}

\newcommand\C{\mathcal{C}}

\newcommand\cA{\mathcal{A}}
\newcommand\cB{\mathcal{B}}

\newcommand\cD{\mathcal{D}}

\newcommand\cI{\mathcal{I}}

\newcommand\cK{\mathcal{K}}

\newcommand\cP{\mathcal{P}}

\newcommand\cS{\mathcal{S}}

\newcommand\cW{\mathcal{W}}

\newcommand\sE{\mathscr{E}}

\newcommand\sO{\mathscr{O}}
\newcommand\sP{\mathscr{P}}

\renewcommand\L{\Lambda}

\newcommand\g{\mathfrak{g}}

\newcommand\cHom{\mathcal{H}\!\mathit{om}}

\newcommand\Ho{\mathrm{Ho}}

\newcommand\Alg{\mathrm{Alg}}

\newcommand\comm{\mathrm{com}}

\newcommand\Mod{\mathrm{Mod}}

\newcommand\Hom{\mathrm{Hom}}
\newcommand\Map{\mathrm{Map}}
\newcommand\map{\mathrm{map}}

\newcommand\HHom{\underline{\mathrm{Hom}}}
\newcommand\EEnd{\underline{\mathrm{End}}}

\newcommand\Ext{\mathrm{Ext}}
\newcommand\EExt{\mathbb{E}\mathrm{xt}}
\newcommand\End{\mathrm{End}}

\newcommand\Der{\mathrm{Der}}

\newcommand\Iso{\mathrm{Iso}}

\newcommand\cone{\mathrm{cone}}
\newcommand\cocone{\mathrm{cocone}}
\newcommand\dg{\mathrm{dg}}
\newcommand\per{\mathrm{per}}

\newcommand\Ob{\mathrm{Ob}}

\newcommand\Spec{\mathrm{Spec}\,}

\newcommand\Set{\mathrm{Set}}

\newcommand\Cat{\mathrm{Cat}}

\newcommand\Mor{\mathrm{Mor}\,}

\newcommand\Aff{\mathrm{Aff}}

\newcommand\<{\langle}
\renewcommand\>{\rangle}
\newcommand\Lim{\varprojlim}
\newcommand\LLim{\varinjlim}

\newcommand\ho{\mathrm{ho}\!}
\newcommand\into{\hookrightarrow}
\newcommand\onto{\twoheadrightarrow}

\newcommand\xra{\xrightarrow}
\newcommand\xla{\xleftarrow}

\newcommand\pr{\mathrm{pr}}

\newcommand\bt{\bullet}
\newcommand\by{\times}

\newcommand\ddef{\mathrm{Def}}

\newcommand\Perf{\mathrm{Perf}}

\newcommand\GL{\mathrm{GL}}

\newcommand\Mat{\mathrm{Mat}}
\newcommand\et{\acute{\mathrm{e}}\mathrm{t}}

\newcommand\Tot{\mathrm{Tot}\,}

\newcommand\pd{\partial}



\newcommand\rig{\mathrm{rig}}

\newcommand\op{\mathrm{opp}}

\newcommand\co{\colon\thinspace}

\newcommand\oR{\mathbf{R}}

\newcommand\oL{\mathbf{L}}

\newcommand\uleft\underleftarrow
\newcommand\uline\underline
\newcommand\uright\underrightarrow

\newtheorem{theorem}{Theorem}[section]
\newtheorem{proposition}[theorem]{Proposition}
\newtheorem{corollary}[theorem]{Corollary}

\newtheorem{lemma}[theorem]{Lemma}
\newtheorem*{theorem*}{Theorem}
\newtheorem*{proposition*}{Proposition}
\newtheorem*{corollary*}{Corollary}
\newtheorem*{lemma*}{Lemma}
\newtheorem*{conjecture*}{Conjecture}

\theoremstyle{definition}
\newtheorem{definition}[theorem]{Definition}
\newtheorem*{definition*}{Definition}

\newtheorem*{notation*}{Notation}

\theoremstyle{remark}
\newtheorem{example}[theorem]{Example}

\newtheorem{remark}[theorem]{Remark}
\newtheorem{remarks}[theorem]{Remarks}

\newtheorem*{example*}{Example}
\newtheorem*{examples*}{Examples}
\newtheorem*{remark*}{Remark}
\newtheorem*{remarks*}{Remarks}
\newtheorem*{exercise*}{Exercise}
\newtheorem*{property*}{Property}
\newtheorem*{properties*}{Properties}

\externaldocument[DStein-]{DStein}
\externaldocument[smallet-]{smallet}
\externaldocument[DQDG-]{DQDG}
\externaldocument[ddt1-]{ddt1hyperref}
\externaldocument[stacks2-]{stacks2}
\externaldocument[stacksintro-]{stacksintro}
\externaldocument[poisson-]{poisson}
\externaldocument{NCpoisson}

\sloppy
\begin{document}

\begin{abstract}
 We introduce a formalism for derived moduli functors on differential graded associative algebras, which leads to non-commutative enhancements of derived moduli stacks and naturally gives rise to structures such as Hall algebras. Descent arguments are not available in the non-commutative context, so we establish new methods for constructing various kinds of atlases. The formalism  permits the development of the theory of shifted bi-symplectic and shifted double Poisson structures in the companion paper \cite{NCpoisson}. 
\end{abstract}

\title{Non-commutative derived moduli prestacks}
\author{J.P.Pridham}

\maketitle
\section*{Introduction}

The basic building blocks for derived algebraic geometry in characteristic $0$ are  differential graded-commutative  algebras (CDGAs) $A_0 \xla{\delta} A_1 \xla{\delta} \ldots$ (where we use homological grading) concentrated in non-negative chain degrees. In particular, derived affine schemes form the opposite category to these, localised at weak equivalences, and derived moduli problems give rise to functors from such CDGAs to  $\infty$-groupoids, or equivalently to simplicial sets.

This paper is concerned with non-commutative enhancements of derived moduli functors, which tend to exist when the moduli problems are linear in nature. Specifically, we look at functors $F$ on   differential graded associative algebras (DGAAs) concentrated in non-negative chain degrees,
which restrict to the usual derived moduli functors $F^{\comm}$ when restricted to CDGAs, and to the non-commutative deformation functors of \cite{EfimovLuntsOrlov1} when restricted to Artinian local DGAAs. The main motivating examples are given by moduli of perfect complexes and related constructions: given any DGAA (or even dg category) $A$ , we can consider the functor which sends a  DGAA $B$ to the $\infty$-groupoid of perfect $A \otimes B$-complexes. Related constructions are given by considering the $\infty$-groupoid of Morita morphisms from $A$ to $B$, or the $\infty$-groupoid of DGAA maps from $A$ to $B$ modulo inner automorphisms.

From a non-commutative  moduli functor $F$ and a finite  DGAA $M$, we can also form a new moduli functor $\Pi_MF:=F(M\circ -)$. The obvious examples to consider for $M$ are the matrix algebras $M_n = \Mat_n$, which lead to representation spaces given by  the commutative restrictions $(\Pi_{M_n}F)^{\comm}$. In particular, this means that the usual process of studying an associative algebra by looking at 
the associated representation spaces becomes a two-stage process,  with our non-commutative moduli functors featuring as an intermediate step. Another example of this form is the functor $F((\begin{smallmatrix} \Z & \Z\\ 0 & \Z \end{smallmatrix})\ten -)$ parametrising extensions of elements of $F$, which leads to the data required to recover Hall algebras. 

A significant difference between commutative and non-commutative settings is that there is no good notion of descent in the non-commutative setting, because coproducts are not preserved by pullbacks. This might seem at first sight to be a serious stumbling block, but sheaves and descent are only of secondary importance in derived algebraic geometry, where the conditions for existence of cotangent complexes with good obstruction theory are essentially independent of sheaf conditions. 
We therefore just concentrate on  presheaves, and look for atlases consisting of a nested union of  affine objects which map  surjectively to our presheaf, without any form of descent. For moduli of projective modules or of perfect complexes, we show how to construct such atlases by parametrising idempotents. For more general well-behaved derived NC prestacks $F$, we prove a weaker result showing that every point $x \in F(A)$ of the prestack is dominated by a form of non-commutative Lie algebroid \'etale over $F$, since this is all we need  to develop a theory of shifted double Poisson structures in  \cite{NCpoisson}.

The structure of the paper is as follows.

In \S \ref{NCmodulisn}, we introduce NC prestacks and their derived analogues, together with cotangent complexes, \'etale morphisms and submersive morphisms (the NC analogue of smooth morphisms). We also explain how each such prestack has  associated representation spaces, look at how they can recover Hall algebras, and explain the relations with various other non-commutative enhancements of algebraic geometry. 

In \S \ref{NCstacksn}, we then introduce non-commutative analogues of higher and derived Deligne--Mumford and Artin stacks, which can be thought of as Kan complexes of non-commutative affine schemes, following the philosophy of \cite{stacks2}. The main results are Proposition \ref{projgeomprop} and Corollary \ref{Perfgeomcor}, and their derived analogues  Propositions \ref{dprojgeomprop} and \ref{dPerfgeomcor}, showing that the moduli functors of projective modules and of perfect complexes are derived NC Artin $\infty$-prestacks. We also observe in Remark \ref{Perfnicermk} that moduli of perfect complexes over any locally proper dg category  have well-behaved cotangent complexes.

Since constructing Artin atlases is much harder work without descent, \S \ref{stackysn} instead shows how to approximate derived NC prestacks by stacky DGAAs, which act as a derived non-commutative analogue of Lie algebroids. A stacky DGAA is a bidifferential bigraded associative algebra, where we think of one grading as stacky and the other as derived, and only define equivalences relative to the derived structure. Every derived NC prestack $F$ naturally extends to a functor $D_*F$ on stacky DGAAs, and when $F$ has a well-behaved cotangent complex, we concentrate on the rigid points  $(D_*F)_{\rig} \subset D_*F$, i.e. the points corresponding to suitably \'etale maps from stacky DGAAs. The main results are    Corollary \ref{etsitecor} and Proposition \ref{replaceprop},  showing that most invariants of $F$ can be recovered from $(D_*F)_{\rig}$; in \cite{NCpoisson}, we use these to extend the definition of shifted bisymplectic and double Poisson structures naturally to all such derived NC prestacks.

\subsubsection*{Notation}

For a chain (resp. cochain) complex $M$, we write $M_{[i]}$ (resp. $M^{[j]}$) for the complex $(M_{[i]})_m= M_{i+m}$ (resp. $(M^{[j]})^m = M^{j+m}$). We often work with double complexes, in the form of cochain chain complexes, in which case $M_{[i]}^{[j]}$ is the double complex $(M_{[i]}^{[j]})^n_m= M_{i+m}^{j+n}$. When we have a single grading and need to compare chain and cochain complexes, we silently  make use of the equivalence  $u$ from chain complexes to cochain complexes given by $(uV)^i := V_{-i}$, and refer to this as rewriting the chain complex as a cochain complex (or vice versa). On suspensions, this has the effect that $u(V_{[n]}) = (uV)^{[-n]}$. We also occasionally write $M[i]:=M^{[i]} =M_{[-i]}$ when there is only one grading. 

For chain complexes, by default we denote differentials by $\delta$. When we work with cochain chain complexes, the cochain differential is usually denoted by $\pd$. We use the subscripts and superscripts $\bt$ to emphasise that chain and cochain complexes incorporate differentials, with $\#$ used instead when we are working  with the underlying graded objects.

Given $A$-modules $M,N$ in chain complexes, we write $\HHom_A(M,N)$ for the cochain complex given by
\[
 \HHom_A(M,N)^i= \Hom_{A_{\#}}(M_{\#},N_{\#[-i]}),
\]
with differential $\delta f= \delta_N \circ f \pm f \circ \delta_M$,
where $V_{\#}$ denotes the graded vector space underlying a chain complex $V$.

We write $s\Set$ for the category of simplicial sets, and $\map$ for derived mapping spaces, i.e. the right-derived functor of simplicial $\Hom$. (For dg categories, $\map$ corresponds via the Dold--Kan correspondence to the truncation of $\oR\HHom$.)

\tableofcontents

\section{NC moduli functors}\label{NCmodulisn}

\subsection{The setup}\label{setupsn}


\begin{definition}\label{algdef}
 For a  commutative ring $R$, we write $\Alg(R)$ for the category of associative  $R$-algebras $A$.

We denote the opposite category to  $\Alg(R)$  by $\Aff^{nc}(R)$. Given $A \in \Alg(R)$, we denote the corresponding object of  $\Aff^{nc}(R)$ by $\Spec^{nc} A$.
\end{definition}

\begin{definition}\label{dgalgdef}
For a  CDGA $R= ( \ldots \xra{\delta}R_1 \xra{\delta} R_0) $, we write $dg\Alg(R)$ for the category of associative  $R$-algebras $A_{\bt}= (\ldots \xra{\delta} A_1 \xra{\delta} A_0 \xra{\delta} \ldots)  $ in  chain complexes. We write $dg_+\Alg(R)$ for the full subcategory consisting of objects $A$ concentrated in non-negative chain degrees.  

We denote the opposite category to  $dg\Alg(R)$ (resp. $dg_+\Alg(R)$) by $DG\Aff^{nc}(R)$ (resp. $DG^+\Aff^{nc}(R)$). Given $A \in dg_+\Alg(R)$, we denote the corresponding object of  $DG^+\Aff^{nc}(R)$ by $\Spec^{nc} A$.
\end{definition}

We think of $DG^+\Aff^{nc}(R)$ as the category of non-commutative derived affine schemes over $R$, with equivalences given by quasi-isomorphism.

There is a model structure on $dg_+\Alg(R)$ (resp. $dg\Alg(R)$) in which weak equivalences are quasi-isomorphisms and fibrations are surjections in strictly positive degrees (resp. surjections). The model structures come from the projective model structure on chain complexes of abelian groups by applying \cite[Theorem 11.3.2]{Hirschhorn} to the free-forgetful adjunction. The inclusion functor $dg_+\Alg(R) \to dg\Alg(R)$ is then left Quillen, with right adjoint given by good truncation. In this model structure, objects of $dg_+\Alg(R)$ which are freely generated as graded associative $R_{\#}$-algebras (forgetting the differential)  are cofibrant, as are their retracts. 

\begin{remark}
 
Note that we do not need  any restrictions on the characteristic of $R$, since the free functor from chain complexes of abelian groups to $R$-algebras is given by the tensor functor $V \mapsto R\ten_{\Z}T_{\Z}(V)$, which sends trivial cofibrations to quasi-isomorphisms. 
This is in marked contrast to the situation for commutative algebras (or indeed algebras over any symmetric operad), where differential graded algebras do not have a natural model structure.

\end{remark}

The following  notion gives the classical truncation of a derived NC affine scheme:
\begin{definition}\label{pi0def}
 Define $\pi^0 \co DG^+\Aff^{nc}(R) \to \Aff^{nc}(R)$ by $\pi^0\Spec^{nc}A := \Spec^{nc}\H_0A$.
\end{definition}

The reason for this notation is that $dg_+\Alg(R)$ is Quillen equivalent, via Dold--Kan normalisation and the Eilenberg--Zilber shuffle product, to the natural model category of simplicial $R$-algebras. Thus objects $\Spec^{nc}A \in DG^+\Aff^{nc}(R)$ correspond to cosimplicial diagrams in $ \Aff^{nc}(R)$, with $\pi^0\Spec^{nc}A$ being the limit of the cosimplicial diagram (whereas path components $\pi_0$ denote the colimit of a simplicial diagram).

\begin{definition}\label{Aedef}
 Given $A \in dg\Alg(R)$, we denote by $A^{\op}$ the $R$-DGAA with the same underlying complex as $A$ but the opposite multiplication, so $a^{\op}b^{\op}:= (-1)^{\bar{a}\bar{b}} (ba)^{\op}$. We then write $A^e:=A\ten_RA^{\op}$ and  $A^{\oL,e}:= A\ten_R^{\oL}A^{\op}$, which we may regard as an object of the homotopy category of $dg\Alg(R)$, so $A^{e}$-modules (resp. $A^{\oL,e}$-modules) are $A$-bimodules for which the left and right $R$-module structures are strictly  compatible (resp. compatible up to coherent homotopy).
\end{definition}
Note that if $A$ and $R$ are both concentrated in non-negative chain degrees, with the underlying graded algebra $A_{\#}$ flat over $R_{\#}$, then we may simply take $A^{\oL,e}= A^e$. In particular, this holds whenever $A$ is cofibrant in $dg\Alg(R)$. 

\begin{definition}
 Given $A \in dg\Alg(R)$, we denote by $dg\Mod(A)$ the category of right $A$-modules in chain complexes. If $A \in dg_+\Alg(R)$, we let $dg_+\Mod(A) \subset dg\Mod(A)$ be the subcategory of objects concentrated in non-negative chain degrees.
\end{definition}

Applying \cite[Theorem 11.3.2]{Hirschhorn} as before, there is a projective model on $dg_+\Mod(A)$ (resp. $dg\Mod(A)$) in which weak equivalences are quasi-isomorphisms and fibrations are surjections in strictly positive degrees (resp. surjections). 

\begin{definition}\label{projmoddef}
For $A \in dg\Alg(R)$, we say that a module $M \in dg_+\Mod(A)$ is homotopy projective if as an object of the homotopy category $\Ho(dg_+\Alg(A))$ it is a direct summand of a  direct sum of copies of $A$, without shifts.
\end{definition}

\begin{lemma}\label{projmodlemma}
 For $C \in dg_+\Alg(R)$, a morphism $f \co M \to N$ in $dg_+\Mod(C )$ has the homotopy left lifting property with respect to all surjections if and only if its cone is homotopy projective.
\end{lemma}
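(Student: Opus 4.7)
The plan is to use the cofibre sequence structure of $dg_+\Mod(C)$. In the stable homotopy category $\Ho(dg_+\Mod(C))$, the map $f$ fits in an exact triangle $M \xra{f} N \xra{\iota} \cone(f) \xra{\partial} M[1]$, and for any $X \in dg_+\Mod(C)$, applying $\oR\HHom_C(-, X)$ yields a fibre sequence $\map_C(\cone(f),X) \to \map_C(N,X) \to \map_C(M,X)$. Given a surjection $p \co E \onto B$ with kernel $K$, a diagram chase using the fibre sequence $\map_C(-,K) \to \map_C(-,E) \to \map_C(-,B)$ identifies the obstruction for a homotopy lift of a square $(g \co M \to E,\ h \co N \to B)$ with a class in $\Ext^1_C(\cone(f), K)$. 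Thus the homotopy LLP of $f$ against all surjections is equivalent to the vanishing of $\Ext^1_C(\cone(f), K)$ for every $K \in dg_+\Mod(C)$, since every such $K$ arises as $\ker p$ for the split surjection $p \co K \oplus B \onto B$.

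For $(\Leftarrow)$: a homotopy projective $\cone(f)$ is a $\Ho$-retract of some $C^{(I)}$, and since $\Ext^i_C(C, K) = H_{-i}(K) = 0$ for $i \geq 1$ and $K \in dg_+\Mod(C)$ (as $K$ is concentrated in non-negative degrees), the same vanishing passes to direct sums and retracts. Hence $\Ext^{\geq 1}_C(\cone(f), K) = 0$; in particular the $\Ext^1$-obstruction vanishes, and the lift exists.

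For $(\Rightarrow)$: from the LLP we obtain $\Ext^1_C(\cone(f), K) = 0$ for every $K$. Using short exact sequences within $dg_+\Mod(C)$ and the associated long exact $\Ext$-sequences, extend this to $\Ext^{\geq 1}_C(\cone(f), K) = 0$ for all $K$. To exhibit $\cone(f)$ as a homotopy retract of a free module without shifts, choose a set $S$ of generators of $H_0(\cone(f))$ (as an $H_0(C)$-module) together with a map $C^{(S)} \to \cone(f)$; after suitable cofibrant replacement this is a surjection in $\Ho$, and the Ext-vanishing then lifts $\id_{\cone(f)}$ to a section, realising $\cone(f)$ as a homotopy retract of $C^{(S)}$.

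The main obstacle is the last step of $(\Rightarrow)$: showing that the Ext-vanishing yields an explicit homotopy retraction onto $\cone(f)$ from a direct sum of copies of $C$ \emph{without shifts}. The subtlety is that $\cone(f)$ can have nonzero components in arbitrary chain degrees, while ``no shifts'' forces all generators to lie in degree~$0$; the argument must leverage the non-negativity of the chain complexes (so that $K[i] \in dg_+\Mod(C)$ for $i \geq 0$) together with the full vanishing $\Ext^{\geq 1}_C(\cone(f),-) = 0$ to reduce the problem to degree $0$, where standard module-theoretic projectivity for $H_0(\cone(f))$ over $H_0(C)$ suffices.
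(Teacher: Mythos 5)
Your reduction of the homotopy LLP to vanishing of $\Ext^1_C(\cone(f),K)$ can be made to work, but the justification you give is incorrect. For a split surjection $p\co K\oplus B\onto B$ the lifting problem is trivially solvable by composing with the section, so the obstruction associated to such a square is automatically zero regardless of whether $\Ext^1_C(\cone(f),K)$ vanishes; split surjections therefore detect nothing. To extract $\Ext^1_C(\cone(f),K)=0$ from the LLP you need a non-split witness: given $\xi\in\Ext^1_C(\cone(f),K)$, represent it by a chain map $Q\to K_{[-1]}$ out of a cofibrant model $Q$ of $\cone(f)$ (note $K_{[-1]}\in dg_+\Mod(C)$), and take the degreewise split but not chain-split surjection $\cocone(\xi)\onto Q$ with kernel $K$; the connecting map sends $\id_Q\in\Ext^0_C(\cone(f),Q)$ to $\xi$, and surjectivity of $\Ext^0_C(\cone(f),\cocone(\xi))\to\Ext^0_C(\cone(f),Q)$ then forces $\xi=0$.

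The more serious gap is the one you flag yourself in $(\Rightarrow)$, and your sketch does not close it. A map $C^{(S)}\to\cone(f)$ built from generators of $\H_0(\cone(f))$ need not be a surjection, nor even surjective on homology, because $\cone(f)$ can have higher homology (and generically will, whenever $C$ does); so ``after suitable cofibrant replacement this is a surjection in $\Ho$'' is not something you can arrange. Nor does projectivity of $\H_0(\cone(f))$ over $\H_0(C)$ settle the matter: the claim concerns $\cone(f)$ as an object of $\Ho(dg_+\Mod(C))$, not its $\H_0$. The paper's proof sidesteps $\Ext$-vanishing entirely. Take $S$ to be a set of generators of the degree-zero module $\cone(f)_0$ itself (not of $\H_0$), giving $\bigoplus_S C\to\cone(f)$ surjective in degree $0$, and then adjoin a contractible summand to form the surjection $\bigoplus_S C\oplus\cone(\cone(f)_{>0})_{[1]}\onto\cone(f)$, whose source is still quasi-isomorphic to $\bigoplus_S C$. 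Applying the LLP of $\cone(f)$ (in the form: $\Ext^0_C(\cone(f),-)$ sends surjections to surjections) to this specific surjection produces a homotopy section, exhibiting $\cone(f)$ as a retract of $\bigoplus_S C$ in $\Ho(dg_+\Mod(C))$. This concrete construction of a surjection from an object quasi-isomorphic to $\bigoplus_S C$ onto $\cone(f)$ is the step your argument is missing.
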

\begin{proof}
This is a standard argument.
By considering homotopy fibre products, it follows that  the morphism $f$ has the homotopy left lifting property with respect to all surjections if and only if its cone does. In other words, this says that for all surjections $K \onto L$  in $dg_+\Mod(C)$, the map 
\[
 \Ext^0_C(\cone(f),K)\to \Ext^0_C(\cone(f),L) 
\]
is surjective. 

This property is satisfied whenever $\cone(f)$ is a direct sum of copies of $C$, and indeed whenever $\cone(f)$ is a retract of such. Conversely, given $\cone(f)$ with the properties above, choosing a set $S$ of generators for $\cone(f)_0$ gives a map $\bigoplus_{s \in S}A \to \cone(f)$ which is surjective in degree $0$. This then leads to a surjection 
\[
\bigoplus_{s \in S}A \oplus\cone(\cone(f)_{>0})_{[1]} \to \cone(f)
\]
from an object of  $dg_+\Mod(C)$ quasi-isomorphic to $\bigoplus_{s \in S}A$, which must admit a homotopy section by hypothesis.
\end{proof}

\subsection{Smooth and \'etale morphisms}

\begin{definition}\label{Omegadef}
 Given  a morphism $f \co C \to A$ in $dg\Alg(R)$, we define the $A^{\oL,e}$-module $\Omega^1_{A/C}$ in complexes to be the kernel of the multiplication map $A\ten_C A \to A$, and we  denote its differential (inherited from $A$) by $\delta$. 

We refer to the left-derived version  $\oL\Omega^1_{A/C}$ of this as the cotangent complex, which is unique up to quasi-isomorphism, and is given by 
\[
 \cocone(A\ten^{\oL}_CA \to A),
\]
again regarded as an $A^{\oL,e}$-module.
\end{definition}

\begin{remarks}\label{cotremarks}
Note that if $C \to A$ is a morphism in $dg_+\Alg(R)$, then  $\oL\Omega^1_{A/C}$   is quasi-isomorphic to 
 $A\ten_{\tilde{A}}\Omega^1_{\tilde{A}/C}\ten_{\tilde{A}}A$ for any factorisation $C \to \tilde{A} \to A$ with $\tilde{A} \to A$ a quasi-isomorphism and $\tilde{A}_{\#}$ flat as a left or right $C_{\#}$-module; in particular this applies if $\tilde{A}$ is a cofibrant replacement of $A$ over $C$. 

Observe that for an $A$-bimodule $M$, an $A$-bilinear map $\Omega^1_{A/C} \to M$ is essentially the same thing as an $C$-bilinear derivation $A\to M$, via the universal derivation $d \co A \to \Omega^1_{A/C}$ given by $da=a\ten 1 - 1\ten a$. 

\end{remarks}

\begin{lemma}\label{cotexact}
 Given morphisms $C  \to B \to A$ in $dg\Alg(R)$, there is a natural exact triangle 
\[
A\ten_B^{\oL}\oL\Omega^1_{B/C}\ten_B^{\oL}A \to \oL\Omega^1_{A/C} \to \oL\Omega^1_{A/B} \to 
\]
of $A^{\oL,e}$-modules.
\end{lemma}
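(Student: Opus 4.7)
The plan is to deduce the triangle from the defining presentation $\oL\Omega^1_{A/C} \simeq \cocone(A \ten^{\oL}_C A \to A)$ of Definition \ref{Omegadef} together with a straightforward comparison of exact triangles; this is the non-commutative analog of the classical Jacobi--Zariski transitivity sequence.

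First, I would apply the exact functor $A \ten^{\oL}_B - \ten^{\oL}_B A$ to the defining exact triangle $\oL\Omega^1_{B/C} \to B \ten^{\oL}_C B \to B \to$ of $B^{\oL,e}$-modules. Using the identifications $A \ten^{\oL}_B (B \ten^{\oL}_C B) \ten^{\oL}_B A \simeq A \ten^{\oL}_C A$ and $A \ten^{\oL}_B B \ten^{\oL}_B A \simeq A \ten^{\oL}_B A$, this yields an exact triangle
\[
A \ten^{\oL}_B \oL\Omega^1_{B/C} \ten^{\oL}_B A \to A \ten^{\oL}_C A \to A \ten^{\oL}_B A \to
\]
of $A^{\oL,e}$-modules. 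Next, I would consider the morphism of defining exact triangles
\[
\begin{CD}
\oL\Omega^1_{A/C} @>>> A \ten^{\oL}_C A @>>> A \\
@VVV @VVV @V=VV \\
\oL\Omega^1_{A/B} @>>> A \ten^{\oL}_B A @>>> A
\end{CD}
\]
induced by the natural surjection $A \ten^{\oL}_C A \to A \ten^{\oL}_B A$ and the identity on $A$. By the octahedral axiom, the cocones of the three vertical arrows fit into an exact triangle; since the cocone of the identity vanishes, this gives an equivalence
\[
\cocone(\oL\Omega^1_{A/C} \to \oL\Omega^1_{A/B}) \simeq \cocone(A \ten^{\oL}_C A \to A \ten^{\oL}_B A) \simeq A \ten^{\oL}_B \oL\Omega^1_{B/C} \ten^{\oL}_B A,
\]
which is precisely the stated exact triangle.

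The only nontrivial point will be to ensure that every construction genuinely respects the $A^{\oL,e}$-module structures. I expect this to be handled by choosing a compatible cofibrant tower $C \to \tilde{B} \to \tilde{A}$, taking $\tilde{B}$ cofibrant over $C$ and $\tilde{A}$ cofibrant over $\tilde{B}$ (hence also cofibrant over $C$, since cofibrations compose). With such choices, the underlying graded objects are projective, every derived tensor reduces to an ordinary tensor product of complexes of honest bimodules, and every cocone can be realised by a strict kernel; so the whole argument can be conducted at the bimodule level and then transferred back to the derived category.
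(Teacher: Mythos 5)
Your proposal is correct and follows essentially the same route as the paper's proof: both identify $A\ten_B^{\oL}\oL\Omega^1_{B/C}\ten_B^{\oL}A$ and $\cocone(\oL\Omega^1_{A/C}\to\oL\Omega^1_{A/B})$ with the common expression $\cocone(A\ten^{\oL}_CA \to A\ten^{\oL}_BA)$, which is exactly what the paper states tersely as ``substituting in the definitions.'' You have simply made the two reductions explicit (applying $A\ten^{\oL}_B-\ten^{\oL}_B A$ to the defining triangle for $\oL\Omega^1_{B/C}$, and the octahedral/$3\times3$ argument for the second), and added the remark about choosing a compatible cofibrant tower $C\to\tilde B\to\tilde A$ to make everything strict, which is a reasonable way to justify bimodule functoriality.
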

\begin{proof}
Substituting in the definitions, we see that 
\[
 A\ten_B^{\oL}\oL\Omega^1_{B/C}\ten_B^{\oL}A \simeq  \cocone(A\ten^{\oL}_CA \to A\ten^{\oL}_BA),
\]
and that $\cocone(\oL\Omega^1_{A/C} \to \oL\Omega^1_{A/B} )$ reduces to the same expression.
\end{proof}

\begin{lemma}\label{sq0lemma}
Any  surjection $C \to D$ in $dg_+\Alg(R)$ with square-zero kernel $I$ (i.e. $0 = I\cdot I \subset C$)  is quasi-isomorphic  to a diagram of the form
\[
\pr_1 \co D'\by_{\eta, D'\oplus I_{[-1]},0}^hD' \to D', 
\]
for some derivation $\eta \co D' \to I_{[-1]}$, where $\by^h$ denotes the homotopy fibre product.
\end{lemma}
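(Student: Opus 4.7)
The plan is to realise the extension $C \to D$ as a twisting of the trivial square-zero extension, and then to identify that twisting with the stated homotopy fibre product.

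First, take a cofibrant replacement $D' \to D$ in $dg_+\Alg(R)$, and form the pullback $\tilde{C} := C\by_D D'$, which remains a strict square-zero extension of $D'$ by $I$ and is quasi-isomorphic to $C$. It therefore suffices to show that $\tilde{C}\to D'$ is quasi-isomorphic to the homotopy fibre product of the lemma, since then pre-composition with $D' \to D$ connects this to the original surjection.

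Next, because $D'$ is cofibrant, its underlying graded algebra $D'_{\#}$ is a retract of a tensor algebra $T_{R_{\#}}(V)$ on some graded free $R_{\#}$-module $V$. Lifting a generating set of $V$ along the surjection $\tilde{C}_{\#} \onto D'_{\#}$ gives a graded algebra section $s \co D'_{\#} \to \tilde{C}_{\#}$, identifying $\tilde{C}_{\#}$ with the square-zero extension $D'_{\#} \oplus I_{\#}$ as a graded algebra (the multiplication being determined by the $D'$-bimodule structure on $I$, which is well defined since $I\cdot I = 0$). Under this identification, the differential $\delta_{\tilde{C}}$ need not agree with $\delta_{D'} \oplus \delta_I$; the discrepancy $\delta_{\tilde{C}}\circ s - s\circ \delta_{D'}$ lands in the ideal $I$, has degree $-1$, and is a graded derivation because $s$ is an algebra map and $I$ is square-zero. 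Rewritten as a degree-zero derivation, this is precisely the datum $\eta \co D' \to I_{[-1]}$ we want.

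Finally, I would identify $\tilde{C}$ with the homotopy fibre product by recognising the trivial square-zero extension $D' \oplus I_{[-1]}$ as an algebra in $dg_+\Alg(R)$, for which $(\id,\eta)$ and $(\id,0)$ are both algebra maps from $D'$ (the former because $\eta$ is a derivation). Using the explicit path-object model for the homotopy fibre product along these two sections of the projection $D' \oplus I_{[-1]} \to D'$, one obtains an algebra with underlying graded module $D'_{\#}\oplus I_{\#}$, the square-zero multiplication, and the twisted differential $(a,i)\mapsto (\delta a, \delta i \pm \eta(a))$, which matches $\tilde C$. The main obstacle is really just the bookkeeping of this last step: getting the path-object for a square-zero extension right in $dg_+\Alg(R)$ (as opposed to in chain complexes or CDGAs) and checking that the two maps $\eta$, $0$ indeed give algebra maps whose homotopy equaliser recovers the twisted differential with the correct sign.
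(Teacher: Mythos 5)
Your strategy is correct and gives a more hands-on proof than the one the paper cites, but the final step, which you yourself flag, is where the real content lies and where your description is not yet accurate. Steps (1)--(3) are fine: $\tilde C:=C\by_D D'$ is a square-zero extension of $D'$ by $I$ quasi-isomorphic to $C$; since $D'_\#$ is projective (being a retract of a tensor algebra) a graded section $s\co D'_\#\to\tilde C_\#$ exists; and the discrepancy $\eta_0:=\delta_{\tilde C}\circ s-s\circ\delta_{D'}$ lands in $I$, is a graded derivation over $s$ because $s$ is multiplicative and $I^2=0$, and is a chain map, hence defines the derivation $\eta\co D'\to I_{[-1]}$. This is genuinely different from the paper's argument (inferred from the proof of Lemma~\ref{obs} and the reference it follows), which avoids choosing a section altogether: one puts the evident square-zero DGAA structure on $\cone(I\to C)$, notes $\cone(I\to C)\to D$ is a quasi-isomorphism, and observes that $(c,i)\mapsto(\bar c,i)$ is a \emph{surjection} $\cone(I\to C)\onto D\oplus I_{[-1]}$ whose strict fibre product with $(\id,0)\co D\to D\oplus I_{[-1]}$ is literally $C$; surjectivity makes this a homotopy pullback, and the stated form follows on replacing both legs by a common model $D'$. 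Your version buys explicitness of $\eta$; the cone version buys not needing the section.

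The gap is in step (4). A genuine path object for the algebra $D'\oplus I_{[-1]}$ (a factorisation $D'\oplus I_{[-1]}\to P\onto(D'\oplus I_{[-1]})\by(D'\oplus I_{[-1]})$) has underlying graded module much larger than $D'_\#\oplus I_\#$ --- it must surject onto $D'_\#\by D'_\#$ --- so ``the explicit path-object model\ldots one obtains an algebra with underlying graded module $D'_\#\oplus I_\#$'' does not hold as written. What does work, and is likely what you intend by ``along these two sections of the projection,'' is to factor \emph{one leg} $D'\xra{(\id,0)}D'\oplus I_{[-1]}$ as a weak equivalence followed by a fibration by adjoining a contractible piece \emph{inside the square-zero ideal}: set $D'':=D'\oplus W$ with $W:=\cocone(\id_{I_{[-1]}})\cong I_{[-1]}\oplus I$ acyclic, so $D''\to D'$ is a quasi-isomorphism of DGAAs and $D''\onto D'\oplus I_{[-1]}$ is a surjection extending $(\id,0)$. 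The strict pullback of this surjection against $(\id,\eta)$ then has underlying graded module $D'_\#\oplus I_\#$, the square-zero multiplication, and differential $(a,i)\mapsto(\delta a,\,\delta i+\eta(a))$, matching your $\tilde C$. Once this replacement is spelled out (and it does work in $dg_+\Alg(R)$ precisely because $W$ sits in the square-zero ideal, so the algebra structure is unaffected), your proof is complete; equivalently, replacing the \emph{other} leg by $\cone(I\to\tilde C)$ gives the paper's cone argument and removes the need for the explicit path-object bookkeeping.
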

\begin{proof}
 This follows by adapting the argument of for instance \cite[Proposition 1.17
 ]{drep} to the setting of DGAAs.
\end{proof}

\begin{lemma}\label{QIMlemma}
 A morphism $f \co A \to B$ in  $dg_+\Alg(R)$ is a quasi-isomorphism if and only if
\begin{enumerate}
 \item the map  $\H_0f \co \H_0A \to \H_0B$ is an isomorphism, and
\item the relative cotangent complex $\oL\Omega^1_{B/A}\ten^{\oL}_{B^{\oL,e}}(\H_0B\ten_{\H_0R}\H_0B)$ is acyclic.
\end{enumerate}
\end{lemma}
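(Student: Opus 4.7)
The forward direction is immediate: if $f$ is a quasi-isomorphism then $B\ten^{\oL}_A B \to B$ is also a quasi-isomorphism, so $\oL\Omega^1_{B/A} = \cocone(B\ten^{\oL}_A B \to B)$ is acyclic and (2) follows. For the converse, the plan is to apply a derived Nakayama lemma twice.

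The Nakayama statement I intend to use reads: for a connective DGA $S$ and a connective $S$-module $N$, vanishing of $N\ten^{\oL}_S \pi_0 S$ implies $N\simeq 0$. I will prove this via the convergent Tor spectral sequence $E^2_{p,q} = \Tor^{\H_* S}_p(\H_* N,\pi_0 S)_q \Rightarrow \H_{p+q}(N\ten^{\oL}_S \pi_0 S)$: if $n = \min\{i : \H_i N\neq 0\}$ then $\H_* N$ is concentrated in internal degrees $\geq n$, which forces $E^2_{r,n-r+1} = 0$ for $r\geq 2$, and hence $E^{\infty}_{0,n} = E^2_{0,n} = \H_n N\neq 0$, contradicting the vanishing of the abutment.

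First I will apply this with $S = B^{\oL,e}$ and $N = \oL\Omega^1_{B/A}$. K\"unneth identifies $\pi_0 B^{\oL,e}$ with $\H_0 B\ten_{\H_0 R}\H_0 B$, and $\oL\Omega^1_{B/A}$ is connective, being the homotopy fibre of $\mu\co B\ten^{\oL}_A B \to B$, a map which is surjective on $\H_0$ (since $b \ten 1 \mapsto b$). Condition (2) is precisely $N\ten^{\oL}_S \pi_0 S\simeq 0$, so Nakayama yields $\oL\Omega^1_{B/A}\simeq 0$; equivalently $\mu$ is a quasi-isomorphism of $B$-bimodules.

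For the second application, I will view $f$ as a morphism of left $A$-modules and set $N := \cone(f)$, a connective left $A$-module with $\H_0 N = \coker(\H_0 f) = 0$ by (1). Applying $B\ten^{\oL}_A -$ to the triangle $A \xra{f} B \to N$ gives $B \to B\ten^{\oL}_A B \to B\ten^{\oL}_A N$; the first arrow sends $b\mapsto b\ten 1_B$ and its composite with $\mu$ is $\id_B$, so $\mu$ being a q.i.\ forces the first arrow to be a q.i.\ as well, whence $B\ten^{\oL}_A N\simeq 0$. Tensoring with $\H_0 B$ over $B$ on the left and using $\H_0 B = \H_0 A$ yields $\H_0 A\ten^{\oL}_A N\simeq 0$, and a second application of Nakayama (now with $S = A$ and $\pi_0 S = \H_0 A$) concludes $N\simeq 0$, so $f$ is a quasi-isomorphism. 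The main obstacle is the clean setup of the Nakayama lemma and the connectivity argument controlling incoming differentials in the Tor spectral sequence; everything else reduces to routine manipulation of cofibre sequences and derived tensor products.
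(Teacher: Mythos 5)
Your proof is correct, and it takes a genuinely different route from the paper's. The paper argues via the functor of points: it considers the transformation $f^*\co\map(B,-)\to\map(A,-)$, does induction up the Postnikov tower of the test object $C$ using Lemma~\ref{sq0lemma} to reduce each stage to a square-zero extension, identifies the homotopy fibres with $\Ext$-groups against the cotangent complex, and concludes $f^*$ is a natural equivalence (hence $f$ is a weak equivalence by Yoneda). You instead argue directly on complexes: a connective derived Nakayama lemma (which you prove by a standard Tor spectral sequence) applied first with $S=B^{\oL,e}$ to upgrade hypothesis (2) to the unconditional acyclicity $\oL\Omega^1_{B/A}\simeq 0$, i.e.\ $\mu\co B\ten^{\oL}_A B\to B$ a quasi-isomorphism; then the split triangle $B\to B\ten^{\oL}_A B\to B\ten^{\oL}_A\cone(f)$ forces $B\ten^{\oL}_A\cone(f)\simeq 0$, and reducing along $B\onto\H_0B=\H_0A$ followed by a second Nakayama (now with $S=A$) kills $\cone(f)$. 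I checked the key points: $\oL\Omega^1_{B/A}$ is connective because $\mu$ is surjective on $\H_0$; the internal-degree bound on $\Tor^{\H_*S}_r(\H_*N,\pi_0 S)_q$ does make the incoming differentials vanish and gives $E^\infty_{0,n}=\H_nN$; and the associativity/base-change manipulations passing from $B\ten^{\oL}_A\cone(f)\simeq 0$ to $\H_0A\ten^{\oL}_A\cone(f)\simeq 0$ are fine. Your argument is shorter and more elementary for this particular lemma; the paper's obstruction-theoretic induction on Postnikov towers is chosen because essentially the same machinery is reused verbatim to prove the harder finiteness statement Lemma~\ref{hlfplemma}, whereas your Nakayama-style argument would not transport so directly to that setting.
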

\begin{proof}
 For the ``only if'' direction, observe that a quasi-isomorphism automatically induces an isomorphism on $\H_0$,  and that it makes the relative cotangent complex $\oL\Omega^1_{B/A} $ acyclic because the spaces of derived derivations must agree.

For the ``if'' direction, consider the transformation $f^* \co \map_{dg_+\Alg(R)}(B,-) \to \map_{dg_+\Alg(R)}(A,-)$ of derived mapping spaces. Since  $\H_0f$ is an isomorphism, the transformation $f^*$ is a weak equivalence when evaluated on any $C \in \Alg(\H_0R)$.  For arbitrary $C \in dg\Alg_+(R)$, we proceed by induction on the Postnikov tower of $C$, i.e. the system of quotients $P_nC :=C/\tau_{>n}C $ by good truncations. The $n=0$ case follows because $P_0C \simeq \H_0C$. 

For the inductive step, we use the fact that the  map $P_nC \to P_{n-1}C$ is homotopy equivalent to  a surjection  with square-zero kernel $(\H_nC)_{[-n]}$. Lemma \ref{sq0lemma} thus  expresses $\map_{dg_+\Alg(R)}(B,P_nC)$ as the homotopy fibre product of a diagram
\[
\xymatrix@R=0ex{
\map_{dg_+\Alg(R)}(B,P_{n-1}C)\ar[dr]^-{\eta} \\
 &\map_{dg_+\Alg(R)}(B,H_0C \oplus (\H_nC)_{[-n-1]}),\\
\map_{dg_+\Alg(R)}(B,H_0C)\ar[ur]^-{0}
}
\]
and similarly for $A$. 

Over a fixed map $g \co B \to \H_0C$, the homotopy fibre of $ \map_{dg_+\Alg(R)}(B,H_0C \oplus I) \to \map_{dg_+\Alg(R)}(B,H_0C)$ has $i$th homotopy group 
\[
 \Ext^{-i}_{B^{\oL,e}}(\oL\Omega^1_B, I)= \Ext^{-i}_{(\H_0B\ten_{\H_0R}\H_0B) }( \oL\Omega^1_{B}\ten^{\oL}_{B^{\oL,e}}(\H_0B\ten_{\H_0R}\H_0B),I),
\]
which is isomorphic to the corresponding expression for $A$, by Lemma \ref{cotexact} and acyclicity of $ \oL\Omega^1_{B/A}\ten^{\oL}_{B^{\oL,e}}(\H_0B\ten_{\H_0R}\H_0B)$. Substituted in the homotopy fibre product above and combined with the inductive hypothesis, this yields the required equivalence $ \map_{dg_+\Alg(R)}(B,P_nC) \simeq  \map_{dg_+\Alg(R)}(A,P_nC) $. 

Passing to the homotopy limit over $n$ then gives $\map_{dg_+\Alg(R)}(B,C) \simeq  \map_{dg_+\Alg(R)}(A,C)$, which implies that $f \co A \to B$ is a weak equivalence.
\end{proof}


\begin{definition}
We say that a morphism $f \co A \to B$ in $\Alg(\H_0R)$ is l.f.p. if for any filtered system $\{C_i\}_i$ in $\Alg(H_0R)$ with colimit $C$, the morphism
\[
 \LLim_i\Hom_{\Alg(H_0R)}(B,C_i) \to  \LLim_i \Hom_{\Alg(H_0R)}(A,C_i)\by^h_{\Hom_{\Alg(H_0R)}(A,C)}\Hom_{\Alg(RH_0)}(B,C)
\]
of $\Hom$-sets is an isomorphism.

 We say that a morphism $f \co A \to B$ in $dg\Alg(R)$ is homotopy l.f.p. if for any filtered system $\{C_i\}_i$ in $dg\Alg(R)$ with colimit $C$, the morphism
\[
 \LLim_i\map_{dg\Alg(R)}(B,C_i) \to  \LLim_i \map_{dg\Alg(R)}(A,C_i)\by^h_{\map_{dg\Alg(R)}(A,C)}\map_{dg\Alg(R)}(B,C)
\]
of derived mapping spaces is a weak equivalence. 
\end{definition}

In particular, this means that if a morphism $f$ in $dg_+\Alg(R)$ is  homotopy l.f.p., then $\H_0f$ is l.f.p., but beware that l.f.p. morphisms in $\Alg(R)$ are not homotopy  l.f.p. in general.

\begin{lemma}\label{hlfplemma}
 A morphism $A \to B$ in  $dg\Alg_+(R)$ is homotopy l.f.p. if and only if
\begin{enumerate}
 \item the morphism $\H_0A \to \H_0B$ in $\Alg(\H_0R)$ is l.f.p., and 
\item the relative cotangent complex 
 $\oL\Omega^1_{B/A}\ten^{\oL}_{B^{\oL,e}}(\H_0B\ten_{H_0R}\H_0B)$ is perfect as an $\H_0B\ten_{H_0R}\H_0B$-module. 
\end{enumerate}
\end{lemma}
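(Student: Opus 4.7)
The plan is to follow the pattern established in Lemma \ref{QIMlemma}, proving the two directions by reducing to statements about cotangent complexes via Postnikov towers.

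For the ``only if'' direction, I would first restrict the defining property of homotopy l.f.p.\ morphisms to filtered systems $\{C_i\}$ lying in the subcategory $\Alg(\H_0R) \subset dg\Alg(R)$. On such systems, the derived mapping spaces  $\map_{dg\Alg(R)}(-,C_i)$ are homotopy equivalent to the $\Hom$-sets out of $\H_0$, because $C_i$ has no higher homotopy; this forces condition (1). For condition (2), I would apply the  homotopy l.f.p.\ condition to filtered systems of the form $\{\H_0B \oplus M_i\}$ where $M_i$ are $(\H_0B \otimes_{\H_0R}\H_0B)$-modules viewed as square-zero extensions. The homotopy fibres of $\map_{dg\Alg(R)}(-,\H_0B\oplus M_i) \to \map_{dg\Alg(R)}(-,\H_0B)$ over a fixed map compute $\Ext^{-*}_{B^{\oL,e}}(\oL\Omega^1_{B/A},M_i)$, as in Lemma \ref{QIMlemma}; comparing the commutation of these $\Ext$-groups with filtered colimits in $M$ gives that $\oL\Omega^1_{B/A}\ten^{\oL}_{B^{\oL,e}}(\H_0B\ten_{\H_0R}\H_0B)$ is perfect, since perfectness of a module $P$ over a ring $S$ is equivalent to $\Ext^*_S(P,-)$ commuting with filtered colimits.

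For the ``if'' direction, I would run the Postnikov induction of Lemma \ref{QIMlemma}, but now uniformly in a filtered system $\{C_i\}$ with colimit $C$. Write $P_n C_i = C_i/\tau_{>n}C_i$; since Postnikov truncation commutes with filtered colimits, $P_n C = \LLim_i P_n C_i$. The base case $n=0$ lands in $\Alg(\H_0R)$ and is handled directly by condition (1), combined with the fact that $\map_{dg\Alg(R)}(B,D) \simeq \Hom_{\Alg(\H_0R)}(\H_0B,D)$ for $D \in \Alg(\H_0R)$. For the inductive step, I would use Lemma \ref{sq0lemma} to express  $\map_{dg\Alg(R)}(B,P_n C_i)$ as a homotopy fibre product over $\map_{dg\Alg(R)}(B, \H_0 C_i\oplus (\H_n C_i)_{[-n-1]})$, and likewise for $A$. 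Filtered colimits commute with finite homotopy limits, so it suffices to check that each of the three terms in the fibre product behaves correctly. The inductive hypothesis handles the $P_{n-1}$ and $\H_0$ terms; the third term, a space of derived derivations, has  homotopy groups $\Ext^{-i}$ of the relative cotangent complex into $(\H_n C_i)_{[-n-1]}$, and here perfectness of $\oL\Omega^1_{B/A}\ten^{\oL}_{B^{\oL,e}}(\H_0B\ten_{\H_0R}\H_0B)$  ensures these $\Ext$-groups commute with the filtered colimit.

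Finally I would pass to the homotopy limit over $n$ to obtain the desired equivalence on $\map_{dg\Alg(R)}(B,C)$ versus $\map_{dg\Alg(R)}(A,C)$. The main obstacle will be the bookkeeping  for the inductive step, specifically ensuring that the finite homotopy limits in Lemma \ref{sq0lemma} commute with the filtered colimits and with taking homotopy fibres over varying basepoints; this is essentially formal once one observes that the relevant basepoint spaces $\map_{dg\Alg(R)}(B,\H_0C_i)$ are already handled by condition (1). The perfectness hypothesis is precisely  what makes the derivation fibre commute with filtered colimits, mirroring the standard commutative story.
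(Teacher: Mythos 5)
Your induction on the Postnikov tower follows the paper's strategy fairly closely, and your treatment of the inductive step (and the ``only if'' direction, which the paper waves away) is sound. However, there is a genuine gap at the final stage.

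Having shown that $\LLim_i\map(B,P_nC(i))_{P_nf_i}\to\map(B,P_nC)_{P_nf}$ is a weak equivalence for each fixed $n$, you say ``Finally I would pass to the homotopy limit over $n$'' as though this were routine. It is not: you need to commute the filtered colimit in $i$ past the countable homotopy limit over the Postnikov tower, and filtered colimits do \emph{not} in general commute with infinite homotopy limits (e.g.\ because of $\Lim^1$ terms). This is exactly where the paper does real work, and it is a separate application of the perfectness hypothesis from the one you make in the inductive step. Because $L:=\oL\Omega^1_{B/A}\ten^{\oL}_{B^{\oL,e}}(\H_0B\ten_{\H_0R}\H_0B)$ is perfect it has \emph{finite projective dimension} $d$, and the exact sequences relating $\pi_k\map(B,P_nC)_{P_nf}$ and $\pi_k\map(B,P_{n-1}C)_{P_{n-1}f}$ via $\Ext^{n-k}$ and $\Ext^{n-k+1}$ of $L$ show these maps are isomorphisms for $n>k+d$. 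Hence the tower of homotopy groups stabilises, $\Lim^1$ vanishes, and $\pi_k\map(B,C)_f\cong\pi_k\map(B,P_{k+d+1}C)_{P_{k+d+1}f}$, which reduces the whole problem to a single fixed finite stage where your inductive step applies. Without this stabilisation argument the proof does not close; the ``main obstacle'' you flag (basepoints and finite homotopy limits) is the easier issue, and the interchange of $\LLim_i$ with $\ho\Lim_n$ is the one you have omitted.
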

\begin{proof}
 The ``only if'' direction is immediate; we prove the ``if'' direction by a similar induction to that used in Lemma \ref{QIMlemma}. Since $\oL\Omega^1_{B/A} \ten^{\oL}_{B^{\oL,e}}(\H_0B\ten_{H_0R}\H_0B)$ is perfect, it has finite projective dimension, $k$ say.  

For any  filtered system $\{C(i)\}$ in  $dg_+\Alg(R)$ with $C:= \LLim_{i \in I} C(i)$, fix an element  $f_l$ of the derived mapping space $\map(A,C_l)$ for some $l \in I$; denote its images in $\map(A,C(i))$ (for $i>l$) and in $\map(A,C)$ by $f_i$ and $f$ respectively. Writing $\map(B,C)_f$ (resp. $\map(B,C(i))_{f_i}$) for the homotopy fibre of $\map(B,C) \to \map(A,C)$ over $f$ (resp.  $\map(B,C(i)) \to \map(A,C(i))$ over $f_i$, the desired statement amounts to showing that the natural map
\[
 \LLim_{i \ge l} \map(B,C(i))_{f_i} \to \map(B,C)_f
\]
is a weak equivalence.

For the Postnikov tower $P_n$ introduced in Lemma \ref{QIMlemma},  denote by $P_nf$ (resp. $P_nf_i$)  the image of $f$ (resp. $f_i$) in $ \map(A,P_nC)$ (resp. $ \map(A,P_nC(i))$). For $n=0$, the first hypothesis is equivalent to saying that the morphism
\[
  \LLim_i\map(A,P_0C(i))_{P_0f_i} \to  \map(A,P_0C)_{P_0f}
\]
of homotopy fibres is a weak  equivalence.
 Applying 
Lemma \ref{sq0lemma} and taking homotopy fibres  gives the expression
\[
 \map(B,P_nC)_{P_nf} \simeq  \map(B,P_{n-1}C)_{P_{n-1}f}\by^h_{\eta, \map(B,H_0C \oplus (\H_nC)_{[-n-1]})_{P_0f},0 }\map(B,H_0C)_{P_0f},
\]
and similarly for each $(C(i),f_i)$. Since the homotopy fibres of $\map(B,H_0C \oplus (\H_nC)_{[-n-1]})_{P_0f} \to \map(B,H_0C) $ are governed by the relative cotangent complex, which is perfect,  they commute with filtered colimits in $C$.  Applied inductively, this shows that the morphisms
\[
  \LLim_i\map(A,P_nC(i))_{P_nf_i} \to  \map(A,P_nC)_{P_nf}
\]
are weak equivalences for all $n$.

It remains to show that we can pass the filtered colimit in $i$ past  the homotopy limit in $n$. To do this, we look at the behaviour of the homotopy groups at each stage in the induction. Let $d$ be the projective dimension of   $L:= \oL\Omega^1_{B/A}\ten^{\oL}_{B^{\oL,e}}(\H_0B\ten_{H_0R}\H_0B)$, which is finite because the complex is perfect. We have  exact sequences
\begin{align*}
 \Ext^{n-k}_{\H_0B\ten_{H_0R}\H_0B}(L, \H_nC) \to &\pi_k\map(B,P_nC)_{P_nf}\\
 &\to \pi_k\map(B,P_{n-1}C)_{P_{n-1}f} \to \Ext^{n-k+1}_{\H_0B\ten_{H_0R}\H_0B}(L, \H_nC),
\end{align*}
from which we deduce that the maps $\pi_k \map(B,P_nC)_{P_nf} \to \pi_k\map(B,P_{n-1}C)_{P_{n-1}f}$ are isomorphisms for all $n>k+d$, and similarly for $C(i)$. 

In particular, this means that the derived limits $\Lim^1$ of these homotopy groups vanish, and hence that
\[
 \pi_k\map(B,C)_{f} \cong \Lim_n  \pi_k \map(B,P_nC)_{P_nf} \cong   \pi_k\map(B,P_{k+d+1}C)_{P_{k+d+1}f},
\]
and similarly for $C(i)$.  Substituting the result for $P_{k+d+1}C$ above, we therefore conclude that the morphisms
\[
  \LLim_{i \ge l} \pi_k\map(B,C(i))_{f_i} \to \pi_k\map(B,C)_f
\]
 are all isomorphisms, as required.
\end{proof}

\begin{definition}\label{smoothetdef} 
We say that a  morphism $f \co A \to B$ in $\Alg(\H_0R)$ is  formally submersive (resp. formally \'etale) if  for any surjection $C \to D$ in $\Alg(R)$ with nilpotent kernel, the morphism
\[
 \Hom_{\Alg(R)}(B,C) \to \Hom_{\Alg(R)}(B,D)\by_{\Hom_{\Alg(R)}(A,D)}\Hom_{\Alg(R)}(A,C)
\]
of $\Hom$-sets is surjective  (resp. an isomorphism). We then say that $f$ is submersive (resp.  \'etale) it is  also l.f.p.

 We say that a  morphism $f \co A \to B$ in $dg_+\Alg(R)$ is homotopy formally submersive (resp. homotopy formally \'etale) if  for any surjection $C \to D$ in $dg_+\Alg(R)$ with nilpotent kernel, the morphism
\[
 \map_{dg_+\Alg(R)}(B,C) \to \map_{dg_+\Alg(R)}(B,D)\by^h_{\map_{dg_+\Alg(R)}(A,D)}\map_{dg_+\Alg(R)}(A,C)
\]
of mapping spaces is surjective on $\pi_0$ (resp. a weak equivalence). We   then say that $f$ is homotopy  submersive (resp. homotopy \'etale) it is  also homotopy l.f.p. 
\end{definition}

\begin{remark}
 The concepts of submersiveness and \'etaleness are chosen so that if a morphism $A \to B$ of DGAAs is homotopy submersive (resp. homotopy \'etale), then the morphism $A/([A,A]) \to B/([B,B])$ of abelianisations is a homotopy smooth (resp. homotopy \'etale) morphism of CDGAs. The reason for our use of ``submersive'' instead of ``smooth'' is to avoid a terminological conflict with the notion of smoothness from  \cite{KontsevichSoibelmanAinftyalgcats}, which was chosen to ensure that the forgetful functor from CDGAs to DGAAs preserves smoothness; every homotopy l.f.p. DGAA is smooth in that sense, since its cotangent complex is perfect as a bimodule.
\end{remark}

\begin{lemma}\label{smoothetlemma}
 A morphism $f \co A \to B$ in $dg_+\Alg(R)$ is homotopy \'etale (resp, homotopy submersive) if $f$ is homotopy l.f.p. and the $B^{\oL,e}$-module
$\oL\Omega^1_{B/A}$ is acyclic (resp. homotopy projective).
\end{lemma}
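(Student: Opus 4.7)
The plan is to reduce the lifting property in Definition \ref{smoothetdef} to the case of a surjection with square-zero kernel, and then to identify the homotopy fibres of the resulting map $\phi \co \map(B, C) \to \map(B, D) \by^h_{\map(A, D)} \map(A, C)$ via the cotangent exact triangle of Lemma \ref{cotexact}. Once these homotopy fibres are expressed in terms of $\oR\HHom_{B^{\oL,e}}(\oL\Omega^1_{B/A}, -)$, both conclusions reduce to pure Ext-vanishing statements.

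For the reduction, I would filter a nilpotent kernel $I\subset C$ by its powers $I \supseteq I^2 \supseteq \cdots$, so that each intermediate surjection $C/I^n \to C/I^{n-1}$ has square-zero kernel $I^{n-1}/I^n$; induction on the nilpotency index then reduces to the case $I^2=0$, where $I$ carries a $D$-bimodule structure. Lemma \ref{sq0lemma} presents such a surjection (up to quasi-isomorphism) as the first projection from a homotopy fibre product $D'\by^h_{\eta, D'\oplus I_{[-1]}, 0}D' \to D'$, and applying $\map_{dg_+\Alg(R)}(B,-)$ and $\map_{dg_+\Alg(R)}(A,-)$ yields a description of $\phi$ as a map of homotopy limits. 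Combined with the identification in Remarks \ref{cotremarks} of derivations into trivial square-zero extensions with maps from the cotangent complex, the homotopy fibre of $\phi$ over a compatible pair $(g\co B\to D, h\co A\to C)$ becomes either empty (when an obstruction class is nonzero) or a torsor for the homotopy fibre of
\[
\oR\HHom_{B^{\oL,e}}(\oL\Omega^1_{B/R}, g_*I) \to \oR\HHom_{A^{\oL,e}}(\oL\Omega^1_{A/R}, g_*I).
\]
By base-change adjunction along $f$ and the cotangent triangle of Lemma \ref{cotexact}, this fibre is canonically $\oR\HHom_{B^{\oL,e}}(\oL\Omega^1_{B/A}, g_*I)$, with the obstruction to non-emptiness lying in $\Ext^1_{B^{\oL,e}}(\oL\Omega^1_{B/A}, g_*I)$ and the torsor structure controlled by $\tau_{\ge 0}\oR\HHom_{B^{\oL,e}}(\oL\Omega^1_{B/A}, g_*I)$.

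The two conclusions then follow from the hypothesis. If $\oL\Omega^1_{B/A}$ is acyclic then every Ext group with coefficients in $g_*I$ vanishes, so the fibre of $\phi$ is contractible over every compatible pair and $\phi$ is a weak equivalence, giving that $f$ is homotopy \'etale. If instead $\oL\Omega^1_{B/A}$ is homotopy projective, then by Definition \ref{projmoddef} it is a retract of $\bigoplus B^{\oL,e}$, so $\oR\HHom_{B^{\oL,e}}(\oL\Omega^1_{B/A}, g_*I)$ is a retract of a product of copies of $g_*I$; because $g_*I$ is concentrated in non-negative chain degrees this retract has trivial positive cohomology, so $\Ext^i_{B^{\oL,e}}(\oL\Omega^1_{B/A}, g_*I)=0$ for $i>0$. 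The obstruction thus vanishes, the fibre is non-empty over every compatible pair, and $\phi$ is surjective on $\pi_0$, so $f$ is homotopy submersive. The main technical obstacle is the square-zero step, where the homotopy fibre of $\phi$ must be identified with the relative derived $\Hom$ complex via a careful combination of Lemma \ref{sq0lemma}, base-change adjunction, and the cotangent triangle; the rest is a routine application of standard Ext-vanishing for acyclic or homotopy projective modules.
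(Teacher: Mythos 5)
Your proposal is correct and follows essentially the same route as the paper's own proof: reduce by nilpotence to a square-zero extension, use Lemma \ref{sq0lemma} to reduce further to split square-zero extensions $D\oplus M\to D$, identify the resulting homotopy fibre with derived Hom out of the relative cotangent complex, and conclude from the hypotheses. The only cosmetic difference is that the paper closes the argument by citing Lemma \ref{projmodlemma} (which encodes the lifting property for homotopy projective cones), whereas you unpack the same vanishing directly via the retract-of-free description of homotopy projectivity and the fact that the test modules $g_*I$ lie in non-negative chain degrees; the two are equivalent and the paper's Lemma \ref{projmodlemma} is itself proved by exactly that Ext-vanishing observation.
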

\begin{proof}
This is a standard obstruction-theoretic argument. By induction, it suffices to consider surjections $C \to D$ with square-zero kernel in Definition \ref{smoothetdef}. Applying Lemma \ref{sq0lemma} then reduces the question to square-zero extensions of the form $D \oplus M \to D$. The statement now follows from Lemma \ref{projmodlemma}  because for $f \in \map_{dg_+\Alg(R)}(A,D)$, we have
\[
 \map_{dg_+\Alg(R)}(A,D \oplus M)\by^h_{\map_{dg_+\Alg(R)}(A,D)}\{f\} \simeq \map_{dg_+\Mod(A^{\oL,e})}(\oL\Omega^1_A,f_*M), 
\]
and similarly for $B$.
\end{proof}

\begin{definition}\label{NCprestdef}
 The model category $\Aff^{nc}(R)^{\wedge}$ of  NC prestacks  is defined to be the category of simplicial set-valued functors on $\Alg(R)$, equipped with the  projective model structure 

 The model category $DG^+\Aff^{nc}(R)^{\wedge}$ of derived NC prestacks  is defined to be the category of simplicial set-valued functors on $dg_+\Alg(R)$, equipped with the left Bousfield localisation of the projective model structure at morphisms of the form $f^* \co \oR \Spec^{nc} B \to \oR\Spec^{nc} A$, for quasi-isomorphisms $A \to B$.
\end{definition}
As in \cite[\S 2.3.2]{hag1},
the homotopy category $\Ho(DG\Aff^{nc}(R)^{\wedge})$ is equivalent to the category of weak equivalence classes of weak equivalence-preserving simplicial functors on $dg_+\Alg(R)$, since fibrant objects are those prestacks which preserve weak equivalences and are objectwise fibrant.

\begin{definition}
Given $A \in \Alg(\H_0R)$, define the presheaf $\Spec^{nc} A \in \Aff^{nc}(\H_0R)^{\wedge}$ to be the functor $\Hom_{\Alg(\H_0R)}(A,-)$ of homomorphisms.

 Given $A \in dg_+\Alg(R)$, define the simplicial presheaf $\oR\Spec^{nc} A \in  DG\Aff^{nc}(R)^{\wedge}$ to be the derived mapping functor $\map_{dg_+\Alg(R)}(A,-)$. 
\end{definition}

\begin{remark}
Derived mapping functors can be calculated using simplicial framings as in \cite[Theorem 5.4.9]{Hovey} by setting $\map:= \oR\Map_r$. If $A$ is cofibrant (i.e. if $A_{\#}$ is a retract of a free graded associative algebra) and $\Q \subset R$, then one  such explicit model for $\map_{dg_+\Alg(R)}(A,B)$ is given by the simplicial set 
\[
 n \mapsto \Hom_{dg\Alg(R)}(A,B\ten_{\Q}\Omega^{\bt}(\Delta^n)),
\]
where $\Omega^{\bt}(\Delta^n):= \Q[t_0, t_1, \ldots, t_n,\delta t_0, \delta t_1, \ldots, \delta t_n ]/(\sum t_i -1, \sum \delta t_i)$
is the CDGA of de Rham polynomial forms on the $n$-simplex, with the $t_i$ of degree $0$.
\end{remark}
\begin{definition}
 Say that a morphism $\oR \Spec^{nc} B \to \oR\Spec^{nc} A$ is a closed immersion if the associated map $\H_0A \to \H_0B$ is surjective.
\end{definition}

\begin{definition}
Say that a derived NC prestack  is affine, or  a derived affine NC scheme, if it is weakly equivalent to one of the form $\oR \Spec^{nc} A$, for $A \in dg_+\Alg(R)$. 
\end{definition}

\subsection{Representation spaces and related constructions} 

\begin{definition}\label{commdef}
If $\Q \subset R$, then 
 given an NC prestack (resp.   derived NC prestack) $F$  over $R$, denote by $F^{\comm}$ the restriction of $F$ to commutative $R$-algebras $C\Alg(R)$ (resp. CDGAs  $dg_+C\Alg(R)$).
\end{definition}

Note that if $F=\Spec^{nc} A$ (resp. $\oR\Spec^{nc} A$)  is an NC affine scheme (resp. a derived NC affine scheme), then $F^{\comm}$ is an  affine scheme (resp. a derived affine scheme), given by $\Spec (A/([A,A]))$ (resp. $\oR\Spec (A'/([A',A']))$ for a cofibrant replacement $A'$ of $A$).

\subsubsection{Restriction of scalars}\label{weilrestrn}

Given an associative $R$-algebra $S$, there is an endofunctor $\Pi_{S/R}$ of $ \Aff^{nc}(R)^{\wedge}$  given by sending a prestack $F$ to the prestack $F(-\ten_RS)$. We can think of this as a form of Weil restriction of scalars, or as a mapping prestack  (of maps from $\Spec^{nc} S$ to $F$). When no ambiguity is likely, we simply denote $\Pi_{S/R}$ by $\Pi_S$.

The following is an immediate consequence of the general adjoint functor theorem and exactness of flat tensor products:
\begin{lemma}\label{weillemma1}
If $S$ is finite and flat as an $R$-module, then the  restriction of scalars functor $\prod_{S/R}$ on $ \Aff^{nc}(R)^{\wedge}$  preserves the affine NC schemes  $ \Aff^{nc}(R)$. This functor preserves submersive morphisms, \'etale morphisms, epimorphisms and open morphisms. 
\end{lemma}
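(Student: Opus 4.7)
The plan is to produce $\Pi_{S/R}$ on affine schemes by constructing a left adjoint $L$ to the endofunctor $T \co B \mapsto B \otimes_R S$ on $\Alg(R)$; this will give $\Pi_{S/R}(\Spec^{nc} A) = \Spec^{nc} L(A)$ by the universal property, establishing that affines are preserved. Since $S$ is finite and flat over the commutative ring $R$, it is finitely generated projective, so $- \otimes_R S$ preserves all limits on the category of $R$-modules: flatness handles equalizers while finite projectivity handles products. Because the forgetful functor $\Alg(R) \to \Mod(R)$ creates limits, $T$ is continuous on $\Alg(R)$, and I would invoke the adjoint functor theorem to supply $L$.

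For preservation of properties of morphisms, I would use the adjunction $L \dashv T$ to translate conditions on $L(A) \to L(B)$ tested against an algebra $C$ into the corresponding conditions on $A \to B$ tested against $C \otimes_R S$. This dictionary preserves the l.f.p. property because $- \otimes_R S$ commutes with filtered colimits, and it preserves surjections together with nilpotency of the kernel by flatness of $S$: indeed, for any two-sided ideal $I \subset C$ the identity $(a_1 \otimes s_1) \cdots (a_n \otimes s_n) = (a_1 \cdots a_n) \otimes (s_1 \cdots s_n)$ yields the inclusion $(I \otimes_R S)^n \subset I^n \otimes_R S$, which vanishes when $I^n = 0$. Combined, these give the formally submersive and formally \'etale lifting properties, hence submersiveness and \'etaleness.

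The remaining claims about epimorphisms and open morphisms follow from exactly the same dictionary, since each is characterized by surjectivity or by lifting properties against surjective maps, and $- \otimes_R S$ sends surjections to surjections by flatness. The only mildly technical step is the nilpotency calculation for $I \otimes_R S$ in the noncommutative setting, which is the unique place where the associative structure matters, and this is purely mechanical; I therefore anticipate no genuine obstacle, with everything else amounting to formal unwinding of the adjunction $L \dashv T$.
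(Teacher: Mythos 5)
Your proposal is correct and fills in exactly the argument the paper compresses into a single sentence (``the general adjoint functor theorem and exactness of flat tensor products''): you build the left adjoint $L$ to $-\otimes_R S$ to show affines are preserved, then use the adjunction dictionary together with the facts that $-\otimes_R S$ preserves filtered colimits, surjections, and nilpotency of two-sided ideals to transfer each morphism property. The one point worth flagging is that you silently upgrade ``finite and flat'' to ``finitely generated projective'' -- true over Noetherian bases or when ``finite'' includes finitely presented, and in any case the same strengthening is implicit in the paper's appeal to the adjoint functor theorem (which needs $-\otimes_R S$ to preserve products), so this is not a deviation from the intended argument.
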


\begin{example}\label{repnex}
By  taking $S$ to be the ring $\Mat_n(R)$ of $n \by n$ matrices,  for any NC affine scheme $\Spec^{nc} A$ over $R$, the NC affine scheme $ \Pi_{\Mat_n/R}\Spec^{nc} A$ represents the functor of $R$-algebra homomorphisms from $A$, so
\[
(\Pi_{\Mat_n/R}\Spec^{nc} A)(B)= \Hom_{\Alg(R)}(A, \Mat_n(B)).
\]
Thus $ (\Pi_{\Mat_n/R}\Spec^{nc} A)^{\comm} $ is the affine scheme representing framed $n$-dimensional representations of $A$. 

We can also consider the NC prestack $[\Spec^{nc} A/\bG_m] $, which sends $B$ to the nerve of the groupoid $[\Hom_{\Alg(R)}(A,B)/B^{\by}]$ of $R$-algebra homomorphisms modulo inner automorphisms, where $\bG_m(B):=B^{\by}$ acts by conjugation. Then the hypersheafification of 
\[
(\Pi_{\Mat_n}[\Spec^{nc} A/\bG_m])^{\comm} 
\]
is the Artin $1$-stack representing $n$-dimensional representations of $A$.
\end{example}

There is similarly an endofunctor $\Pi_{S/R}$ of $ DG^+\Aff^{nc}(R)^{\wedge}$  given by sending a prestack $F$ to the prestack $F(-\ten_RS)$. Again, we can think of this as Weil restriction of scalars, or as a mapping prestack. Similarly, we have:
\begin{lemma}\label{weillemma2}
If $S\in \Alg(R)$ is finite and flat as an $R$-module, then the  restriction of scalars functor $\prod_{S/R}$ on $ DG^+\Aff^{nc}(R)^{\wedge}$ preserves the derived affine NC schemes  $ DG^+\Aff^{nc}(R)$. This functor preserves submersive morphisms, \'etale morphisms, epimorphisms and open morphisms.  
\end{lemma}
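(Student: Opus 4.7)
The plan is to reduce to the adjoint functor theorem, as in Lemma \ref{weillemma1}, but tracking derived functoriality. The endofunctor $-\ten_RS$ on $dg_+\Alg(R)$ preserves all small limits and filtered colimits, since these hold at the module level (using that $S$ is finite and flat) and pass to algebras because algebra operations are finitary. By the adjoint functor theorem it admits a left adjoint $L_S$. Flatness of $S$ additionally ensures preservation of surjections and quasi-isomorphisms, so $-\ten_RS$ is right Quillen for the projective model structure of \S\ref{setupsn}, yielding a left derived functor $\oL L_S$. For $A\in dg_+\Alg(R)$, derived adjunction gives
\[
 (\Pi_{S/R}\oR\Spec^{nc}A)(B) \;=\; \map_{dg_+\Alg(R)}(A, B\ten_RS) \;\simeq\; \map_{dg_+\Alg(R)}(\oL L_SA,B),
\]
so $\Pi_{S/R}\oR\Spec^{nc}A \simeq \oR\Spec^{nc}(\oL L_S A)$, which establishes preservation of derived affine NC schemes.

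For the morphism classes I would argue directly from Definition \ref{smoothetdef}. Given a homotopy \'etale morphism $f\co A\to A'$ in $dg_+\Alg(R)$ and any square-zero extension $C\onto D$ with nilpotent kernel $I$, flatness of $S$ makes $C\ten_RS\onto D\ten_RS$ surjective with nilpotent kernel $I\ten_RS$ (since $(I\ten_RS)^n\subset I^n\ten_RS$). Applying the \'etale lifting property of $f$ to this extension and translating it via the adjunction $\map(\oL L_S-,-)\simeq\map(-,-\ten_RS)$ gives the lifting property for $\oL L_Sf$; the submersive case is identical, with ``weak equivalence'' replaced by ``$\pi_0$-surjection''. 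The homotopy l.f.p. condition transfers similarly once one observes that $-\ten_RS$ commutes with filtered colimits, and preservation of epimorphisms and open morphisms is immediate from their $\pi_0$-level characterisations and the exactness of $-\ten_R S$.

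The main subtlety, such as it is, is the existence of the adjoint $L_S$ in the non-commutative setting; once one verifies that finiteness and flatness of $S$ are enough to invoke the adjoint functor theorem on $dg_+\Alg(R)$ and that the resulting functor descends to homotopy categories, the remainder is a formal consequence of the adjunction, paralleling the proof of Lemma \ref{weillemma1}.
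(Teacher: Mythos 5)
Your proof is correct and takes essentially the same route as the paper: the paper's own proof is the single word ``Similarly'', pointing back to Lemma~\ref{weillemma1}, which is itself deduced from the general adjoint functor theorem and exactness of flat tensor products. Your elaboration — upgrading the adjunction $L_S \dashv (-\otimes_R S)$ to a Quillen adjunction so that it passes to derived mapping spaces, and then pushing the nilpotent-lifting and filtered-colimit conditions of Definition~\ref{smoothetdef} through this adjunction — is precisely the argument the paper leaves implicit.
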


%
%

\subsubsection{Flags and Hall algebras}

If we let $T(r_1,\ldots,r_n)$ be the ring of block upper triangular matrices with diagonal blocks of size $r_1, \ldots,r_n$, then the restriction of scalars 
\[
(\Pi_{ T(r_1,\ldots,r_n)}[\Spec^{nc} A/\bG_m])^{\comm} 
\]
gives the groupoid of flags of $A$-representations of signature $(r_1,\ldots,r_n)$, by adapting Example \ref{repnex}. The graded quotients of the flag  are induced by  the natural ring homomorphisms 
\[
p_i \co T(r_1,\ldots,r_n) \to \Mat_{r_i},
\]
while the inclusion 
\[
\iota \co T(r_1,\ldots,r_n) \to \Mat_{\sum_i r_i}
\]
sends a flag to the underlying representation.


A similar phenomenon arises for the moduli functors $\sP$  and $\Perf$ of projective modules and of perfect complexes in \S\S \ref{modprojmodsn} and \ref{modperfsn} below. For those functors $F$, we have natural equivalences $\Pi_{\Mat_n}F \simeq F$ and $ \Pi_{T(r_1,\ldots,r_n)}F \simeq \Pi_{T_n}F$, where $T_n:= T(\underbrace{1,\ldots,1}_n)$. 
The motivic Hall algebra product induced on $F^{\comm}$ (see for instance \cite[\S 4.2]{bridgelandMotivicHall})
thus comes from push-pull homomorphisms associated to the diagram  
\[
F \simeq  \Pi_{\Mat_2} F \xla{\iota}  \Pi_{T_2}F  \xra{(p_1,p_2)} F \by F.
\]

We now summarise the conditions on an NC prestack $F$ which allow us to construct a Hall algebra in this way. To begin with, we need   a marked basepoint $\{0\} \in F(R)$ and we need $F$ to commute with finite products. 

We will also need $F$ to be stable under adding $0$ in the following sense. We have canonical composite maps 
\begin{align*}
\Pi_{M_n}F \xra{(\id, 0)}   (\Pi_{M_n}F) \by F \cong \Pi_{M_n\by M_1}F &\to  \Pi_{M_{n+1}}F\\
\Pi_{T(r_1,\ldots,r_n)}F \xra{(\id, 0)}   (\Pi_{T(r_1,\ldots,r_n)}F) \by F \cong \Pi_{T(r_1,\ldots,r_n)\by M_1}F &\to  \Pi_{T(r_1,\ldots,r_{i-1}, r_i+1, r_{i+1}, \ldots r_n)}F,
\end{align*}
given by insertion of $1 \by 1$-matrices $M_1$ on the diagonal, and we require  these maps to be equivalences and independent of the $n+1$ (resp. $r_i+1$) choices of position to insert the new entry. 

We then have a simplicial diagram of prestacks given in degree $n$ by $\Pi_{T_n}F \simeq \Pi_{T(r_1,\ldots,r_n)}F$, with simplicial operations induced by the obvious maps 
\[
\pd_i \co T(r_1, \ldots,r_n) \to \begin{cases} T(r_2, \ldots,r_n) & i=0\\
                                  T(r_1, \ldots,r_{i-1}, r_i+r_{i+1}, r_{i+2}, \ldots, r_n) & 1 \le i \le n-1 \\
T(r_1, \ldots,r_{n-1}) & i=r,
                                 \end{cases}
\]
 and
\[
 \sigma_i \co \Pi_{T(r_1, \ldots,r_n)}F \xra{(\id,0)} \Pi_{T(r_1, \ldots,r_n) \by M_1}F   \to T(r_1, \ldots, r_i, 1, r_{i+1}  r_n)F.
\]

For this to result  in an associative Hall algebra product amounts to requiring that $\Pi_{T_{\bt}}F $ be a $2$-Segal object in the sense of \cite{DyckerhoffKapranovHigherSegal}. 
Substituting in \cite[Theorem 2.3.2]{DyckerhoffKapranovHigherSegal}, it suffices to show that that for $1 \le j\le n$,   the natural diagram
\[
\begin{CD}
 \Pi_{T(r_0,r_1, \ldots, r_n)}F @>>> \Pi_{T(r_0,r_1+\ldots+r_j,r_{j+1}, \ldots,r_n)}F \\
 @VVV @VVV\\
 \Pi_{T(r_0,r_1,  \ldots, r_j)}F @>>>\Pi_{T(r_0,r_1+\ldots+r_j)}F
 \end{CD}
\]
is a homotopy pullback square. Since we are assuming that $F$ preserves products, we may rewrite 
the homotopy fibre product as
\[
 \Pi_{T(r_0,r_1+\ldots+r_j,r_{j+1}, \ldots,r_n)}F \by^h_{\Pi_{T(r_0,r_1+\ldots+r_j) \by M_{r_{j+1}} \by \ldots \by M_{r_n} }F}\Pi_{T(r_0,r_1,  \ldots, r_j)\by M_{r_{j+1}} \by \ldots \by M_{r_n} }F
\]
If $F$ is homogeneous in the sense of Definition \ref{hhgsdef} below, we may then exploit nilpotence of the surjection 
\[
T(r_0,r_1+\ldots+r_j,r_{j+1}, \ldots,r_n) \to  T(r_0,r_1+\ldots+r_j) \by M_{r_{j+1}} \by \ldots \by M_{r_n}
\]
to deduce that 
\begin{align*}
 &F(T(r_0,r_1+\ldots+r_j,r_{j+1}, \ldots,r_n)(B))\by^h_{F(T(r_0,r_1+\ldots+r_j)(B)) 
 }F(T(r_0,r_1,  \ldots, r_j)(B))
 \\
 &\simeq F(T(r_0,r_1+\ldots+r_j,r_{j+1}, \ldots,r_n)(B)\by_{T(r_0,r_1+\ldots+r_j)(B)}T(r_0,r_1,  \ldots, r_j)(B))\\
 &= F(T(r_0,r_1, \ldots, r_n)(B)),
\end{align*}
as required.

Since \cite[\S 3.2]{DyckerhoffKapranovHigherSegal} allows us to associate Hall algebras to $2$-Segal objects, this means that there are Hall algebras associated to any  NC prestack $F$ which is product-preserving, homogeneous and has a basepoint $\{0\}\in F(R)$ for which $F$ is stable under adding $0$ in the sense above.

\subsection{Comparisons with other non-commutative spaces}

\subsubsection{From pre-triangulated dg categories to NC prestacks}\label{dgcatsn1}

One perspective on non-commutative geometry, appearing for instance in \cite{KatzarkovKontsevichPantevHodgeMirror,OrlovSmoothProperNC} is that 
a  NC space is a pre-triangulated dg category, with schemes and algebraic stacks being replaced by their dg categories of perfect complexes.

Given such a dg category $\cA$, there are two candidates for the most natural derived NC prestack which we can associate to it. One is the functor of perfect complexes $\Perf_{\cA}\co B \mapsto \Perf(\cA\ten_R^{\oL}B)$ (see Definition \ref{Perfdef} below for a precise definition of $\Perf$).  
The other 
sends $B$ to the space of derived Morita morphisms from $\cA$ to $B$, i.e. the space of dg functors from $\cA$ to the dg category $\per_{dg}(B)$ of perfect $B$-modules; by \cite[Theorem 1.1]{toenMorita}, 
  this is equivalent to the nerve $\Mor(\cA,B)$ of the core of the simplicial category  associated to the dg category $\mathrm{mor}_{dg}(\cA,B)$ of those $\cA-B$-bimodules which are perfect over $B$. When $\cA$ is a smooth and proper dg category over $R$, observe that we have a natural equivalence $\Mor(\cA,B) \simeq \Perf(\cA^{\op}\ten_R^{\oL}B)$.
%

There are also much smaller NC prestacks we can consider, such as the functor  of finite projective $\cA-B$-bimodules. When $A \in \Alg(R)$, a natural \'etale subfunctor of $\pi^0\Mor$ is given by the prestack $[\oR \Spec^{nc} A/\bG_m]$ introduced in \S \ref{weilrestrn}.
%

\subsubsection{From NC prestacks to quasi-NC enhancements}\label{quasiNCsn}

In \cite{kapranovNCGeomCommutator}, Kapranov introduced a notion of NC schemes (referred to as quasi-NC structures in \cite{todaNCModuliStableSheaf}), consisting of a ringed space $(X,\sO_X)$ where $\sO_X$ is associative, its abelianisation  $\sO_X^{\comm}$ defines a scheme $(X,\sO_X^{\comm})$, and the map $\sO_X \to\sO_X^{\comm}$ is pro-nilpotent. Thus $X$ is a  non-commutative nilpotent thickening of  a scheme. Given  an NC prestack $F$, we can extend $F^{\comm}$ to a functor on the category of Kapranov NC schemes $(X,\sO_X)$ by setting
\[
 \map(X,F):= \ho\Lim_d \oR\Gamma(X, F(\sO_X^{\le d})),
\]
where $\sO_X =\Lim_d\sO_X^{\le d}$. In particular, for an $n$-geometric NC prestack $F$ as in \S \ref{NCstacksn} below, this  will yield a quasi-NC enhancement of the $n$-geometric Artin stack $(F^{\comm})^{\sharp}$ on completion and sheafification.

Toda's quasi-NCDG structures from \cite{todaNCModuliStableSheaf} are a dg enhancement of Kapranov's quasi-NC structures. For any derived NC prestack $F$, we can extend $F^{\comm}$ to a functor on the category of Toda's quasi-NCDG structures by the formula above. When applied to an  $n$-geometric derived  NC prestack $F$, this  will similarly yield a quasi-NCDG enhancement of the $n$-geometric derived Artin stack $(F^{\comm})^{\sharp}$.  

It is important to note that a (derived) NC prestack $F$ leads to such enhancements not just of the commutative restriction $F^{\comm}$, but also (via \S \ref{weilrestrn}) of the representation spaces $(\Pi_{\Mat_r}F)^{\comm}$.

\subsubsection{From NC prestacks to  non-commutative deformations}

Given a field $k$ and a point $x \in F(k)$ of a (derived) NC prestack, we can study the infinitesimal neighbourhood of $x$ by looking at the functor $\hat{F}_x \co A \mapsto F(A)\by^h_{F(x)}\{x\}$ for local (dg) Artinian algebras $A$ with residue field $k$, giving a simplicial set-valued functor we we can think of as the $\infty$-groupoid of (derived) NC deformations of $x$. Note that in the derived case, the functor $\hat{F}_x$ is only defined on  Artinian objects $A \in dg_+\Alg(R)$,  but that the non-commutative analogue of \cite[Corollary 4.49]{ddt1} gives a natural extension to Artinian objects in $dg \Alg(R)$ whenever $F$ is homogeneous in the sense of Definition \ref{hhgsdef} below. 

Truncating to the fundamental groupoid then gives us a groupoid-valued functor $\pi_f\hat{F}_x$ on Artinian DGAAs, which is a  deformation functor in the sense of  \cite{EfimovLuntsOrlov1}. In particular, both of the functors $ \pi_f\widehat{(\Perf_{\cA})}_E$ and $ \pi_f\widehat{\Mor(\cA,-)}_E$ induced by the prestacks discussed in \S \ref{dgcatsn1} correspond to the functor $\ddef^h(E)$ from  \cite{EfimovLuntsOrlov1}. 


However, as observed by Kawamata in \cite{kawamataMultipointedNCDefsCY3}, non-commutative deformation theory is not local in nature, because a non-commutative Artin semi-local algebra with nilpotent Jacobson radical is not usually a product of Artin local algebras. A simple manifestation of this phenomenon is that non-commutative deformations of an $R$-module $M$ are governed by the DGAA  $\oR\EEnd_R(M)$, so deformations of the $R\by R$-module $M \by N$ are governed by the DGAA $\oR\EEnd_{R\by R}(M \by N) \neq \oR\EEnd_{R}(M) \by  \oR\EEnd_{R}(N)$. Thus the sheafification inherent to the quasi-NC structures of \S \ref{quasiNCsn}  destroys information about distant commutative points interacting with each other non-commutatively.

Given a (derived) NC prestack $F$, a semisimple algebra $S$ and a point $x \in F(S)$, there is an associated  functor $\hat{F}_x \co A \mapsto F(A)\by_{F(S)}\{x\}$ on the category of nilpotent extensions $A$ of $S$, in other words a (derived) NC deformation functor, regarded as multi-pointed when $S$ is not simple. In contrast with the commutative setting, if $S$ decomposes into simple algebras as $\prod_i S_i$,  with $x_i \in F(S_i)$ the image of $x$, then $ \hat{F}_x$ cannot be recovered from the individual deformation functors $\hat{F}_{x_i}$.

\section{Non-commutative analogues of algebraic stacks}\label{NCstacksn}

\subsection{Non-commutative $n$-geometric prestacks}

We now set about forming non-commutative enhancements of  moduli stacks, using our non-commutative affine schemes as the building blocks. In the non-commutative setting, the typical difficulty with trying to glue  affine schemes or to take stacky quotients  is that descent does not behave well. We bypass this problem by observing that for our motivating examples, there exist  atlases on the level of prestacks, without needing descent. In other words, the moduli functors $F$ 
admit filtered systems $\{U_i \to F\}_i$ of submersive morphisms from non-commutative affines, such that $\LLim_i U_i \to F$ is a surjection of \emph{presheaves}. 

We now fix a commutative ring $R$, which will act as our base ring.

\subsubsection{Definitions}

The following definition arises by substituting our context in \cite[Definitions \ref{stacks2-npreldef} and \ref{stacks2-nptreldef}]{stacks2}. 
For an explicit description of the partial matching objects featuring here, see \cite[\S 2]{stacksintro}.

\begin{definition}\label{npreldef}
Given  a simplicial diagram  $Y\in \Aff^{nc}(R)^{\Delta^{\op}}$, define an  NC Artin (resp. NC Deligne--Mumford) $n$-hypergroupoid  over $Y$ to be a morphism $X_{\bt}\to Y_{\bt}$ in $ \Aff^{nc}(R)^{\Delta^{\op}}$, for which  the   partial matching maps
\[
X_m \to \Hom_{s\Set}(\L^m_k, X)\by_{ \Hom_{s\Set}(\L^m_k, X) }^hY_m 
\]
are   submersive (resp. \'etale) epimorphisms for all  $m\ge 1$, and are isomorphisms for all $m>n$. 
When $Y= \Spec R$ is the final object, we will simply refer to $X$ as an NC Artin (resp. NC Deligne--Mumford)  $n$-hypergroupoid.

Define a  trivial  NC Artin (resp. NC Deligne--Mumford) $n$-hypergroupoid  over $Y$ to be a morphism $X_{\bt}\to Y_{\bt}$ in $ \Aff^{nc}(R)^{\Delta^{\op}}$, for which 
the  matching maps
\[
X_m \to  \Hom_{s\Set}( \pd \Delta^m, X)\by_{\Hom_{s\Set}( \pd \Delta^m, Y)  }Y_m 
\]
are  submersive (resp. \'etale) epimorphisms for all $m \ge 0$, and are isomorphisms for all $m\ge n$.
\end{definition}

Thus trivial hypergroupoids are hypercovers, but note that since the epimorphism condition means that we are only regarding a morphism in $\Aff^{nc}(R) $ as being surjective if it has a section, every trivial hypergroupoid is a  levelwise trivial fibration, and hence a levelwise  weak equivalence.

\begin{definition}\label{geomdef}
 Define an NC prestack $F$ to be strongly quasi-compact (sqc)  $n$-geometric    Artin (resp.  Deligne--Mumford) if it arises as the geometric realisation of an NC Artin (resp. NC Deligne--Mumford)  $n$-hypergroupoid $X$. Say that a morphism $G \to F$ of such prestacks is submersive (resp. \'etale) if it arises as the geometric realisation of a relative NC Artin (resp. NC Deligne--Mumford)  $n$-hypergroupoid $ Y \to X$ with $Y_0 \to X_0$ submersive (resp. \'etale).
\end{definition}

\begin{remarks}\label{geomrmks}
Definition \ref{geomdef} 
gives an analogue of the strongly quasi-compact  $n$-geometric Artin or Deligne--Mumford  stacks in \cite{hag2}. More precisely, if $X$ is an sqc $n$-geometric  Artin (resp. Deligne--Mumford) NC prestack, then restricting the functor to commutative $R$-algebras and taking \'etale hypersheafification yields a strongly quasi-compact   $n$-geometric  Artin (resp. Deligne--Mumford) stack, by \cite[Theorem \ref{stacks2-bigthm}]{stacks2}. That same theorem can be used to give an alternative inductive characterisation of sqc  $n$-geometric    Artin NC prestacks as prestacks $F$ for which there exists a surjective submersion $U \to F$ from an NC affine $U$ such that $U\by^h_FU$ is sqc $(n-1)$-geometric. 

Unravelling the definitions, if $F$ is  an sqc $n$-geometric Artin  NC prestack $F$, then $F$ is equivalent to a limit-preserving functor $X$  for which the map $X(A) \to X(B)$ of simplicial sets is a Kan fibration for every 
nilpotent surjection $A \to B$  in $\Alg(R)$.  If $C \to B$ is another morphism in  $\Alg(R)$, this means that the fibre product $X(A)\by_{X(B)}X(C)$ is a homotopy fibre product. 
It thus follows that the natural map
\[
 F(A\by_BC) \to F(A)\by_{F(B)}^hF(C)
\]
is a weak equivalence, so $F$ satisfies the NC analogue of the homogeneity property from \cite{drep} (cf. \S \ref{hgssn} below), which is closely related to Schlessinger's conditions.

Moreover, the epimorphism conditions imply that for all $A \in \Alg(R)$, the simplicial set $F(A)$ is a Kan complex, with the higher isomorphism conditions implying that it is in fact a form of  $n$-groupoid. In particular, its homotopy groups vanish above degree $n$. 

Finally, since the partial matching maps are submersive, they are l.f.p., which amounts to saying that for any filtered system $\{B_i\}_i$ in $\Alg(R)$ with colimit $B$, the map 
\[
 \LLim_iX(B_i) \to X(B)
\]
is a fibration. In particular, if $C \to B$ is another morphism in  $\Alg(R)$, this means that the fibre product $ X(C)\by_{X(B)}\LLim_iX(B_i)$ is a homotopy fibre product, so the natural diagram
\[
\begin{CD}
 \LLim_i F( C\by_B B_i) @>>> F(C)\\
@VVV @VVV\\
\LLim_iF(B_i) @>>> F(B)
\end{CD}
\]
 is a homotopy pullback square.
\end{remarks}

\begin{definition}
 We say that a morphism $F \to G$ of sqc  $n$-geometric    Artin (resp.  Deligne--Mumford) NC prestacks is submersive (resp. \'etale) if it arises as the geometric realisation of a relative NC Artin (resp. NC Deligne--Mumford)  $n$-hypergroupoid $X \to Y$ for which the map $X_0 \to Y_0$ is submersive (resp. \'etale).
\end{definition}

\begin{definition}\label{opendef}
 We say that a morphism  $F \to G$ of sqc  $n$-geometric    Artin (resp.  Deligne--Mumford) NC prestacks is open if 
\begin{enumerate}
 \item it is submersive (resp. \'etale), and
\item it is a levelwise monomorphism in the sense that for all  $A \in \Alg(R)$, the  map $F(A) \to G(A)$ induces an injection on $\pi_0$ and an isomorphism on the homotopy groups $\pi_i$ for $i \ge 1$.
\end{enumerate}
\end{definition}

\begin{remark}\label{openrmk}
We may rephrase formal submersiveness  of a morphism $F \to G$ of $n$-geometric Artin NC prestacks as saying that for all nilpotent surjections $A \to B$ of $R$-algebras, the map
\[
\eta \co  F(A) \to G(A)\by^h_{G(B)}F(B)
\]
to the homotopy fibre product is surjective on $\pi_0$, since  \cite[Theorem \ref{stacks2-relstrict}]{stacks2} allows us to represent any morphism of $n$-geometric stacks by a relative $n$-hypergroupoid. For submersiveness of such a morphism, we need to add an l.f.p. condition, which amounts to saying that for any filtered system $\{C_i\}_i$ of $R$-algebras with colimit $C$, the map
\[
 \LLim_i F(C_i) \to F(C)\by^h_{G(C)}\LLim_i G(C_i)
\]
is an equivalence.

Similarly, \'etaleness  of a morphism $F \to G$ of  $n$-geometric     Deligne--Mumford NC prestacks amounts to saying that $\eta$ is a weak equivalence, together with the same l.f.p. condition. 

If $F \to G$ is a levelwise monomorphism, then 
$\eta$ reduces to the map
\[
 G(A)\by_{\pi_0G(A)}\pi_0F(A) \to G(A)\by_{\pi_0G(B)}\pi_0F(B),
\]
so submersiveness reduces to saying that $\pi_0F(A) \to \pi_0G(A)\by_{\pi_0G(B)}\pi_0F(B)$ is surjective, hence an isomorphism since both sides embed in $\pi_0G(A)$. \'Etaleness (where defined) also reduces to the same condition. 

In particular, this means that if we regard  $n$-geometric     Deligne--Mumford NC prestacks as being $n$-geometric     Artin NC prestacks, the two potential notions of openness from Definition \ref{opendef} agree.
\end{remark}

We are now in a position to be able to pass beyond the strongly quasi-compact setting:
\begin{definition}\label{inftygeomdef}
 Define an NC prestack $F$ to be $n$-geometric    Artin (resp.  Deligne--Mumford) if it arises as a filtered colimit of open morphisms of sqc $n$-geometric    Artin (resp.  Deligne--Mumford) NC prestacks (for fixed $n$). Define an NC prestack $F$ to be $\infty$-geometric    Artin (resp.  Deligne--Mumford) if it arises as a filtered colimit open morphisms of sqc $m$-geometric    Artin (resp.  Deligne--Mumford) NC prestacks, for varying $m$.  
\end{definition}

The following is now an immediate consequence of Lemma \ref{weillemma1}:
\begin{lemma}\label{stweillemma1}
If $S \in \Alg(R)$ is finite and flat as an $R$-module, then the  restriction of scalars functor $\prod_{S/R}$ on $ \Aff^{nc}(R)^{\wedge}$  preserves the subcategories of  $n$-geometric   and $\infty$-geometric  Artin and  Deligne--Mumford NC prestacks, with $\prod_{S/R}F$ being sqc whenever $F$ is so.
\end{lemma}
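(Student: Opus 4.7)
The plan is to reduce everything to Lemma \ref{weillemma1} together with the observation that $\Pi_{S/R}$ is exact on prestacks. Concretely, since $(\Pi_{S/R}F)(B) := F(B\ten_R S)$ and limits/colimits in $\Aff^{nc}(R)^{\wedge}$ are computed pointwise, $\Pi_{S/R}$ preserves all small limits and colimits of prestacks. Moreover, because $S$ is flat as an $R$-module, the functor $-\ten_R S \co \Alg(R) \to \Alg(R)$ preserves finite limits, so $\Pi_{S/R}$ preserves finite limits of affine NC schemes (and, by Lemma \ref{weillemma1}, preserves affineness itself). In particular, $\Pi_{S/R}$ preserves the partial matching objects $\Hom_{s\Set}(\Lambda^m_k, X)$ and $\Hom_{s\Set}(\pd\Delta^m, X)$ appearing in Definition \ref{npreldef}, as well as any strict or homotopy fibre products used to define relative matching objects.

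Suppose $F$ is sqc $n$-geometric Artin (resp. Deligne--Mumford), so that $F\simeq |X_{\bt}|$ for some NC Artin (resp. DM) $n$-hypergroupoid $X_{\bt}\in \Aff^{nc}(R)^{\Delta^{\op}}$. First I apply $\Pi_{S/R}$ levelwise to obtain a simplicial diagram $\Pi_{S/R}X_{\bt}$ in $\Aff^{nc}(R)$ (using preservation of affineness). By the exactness remarks above, each partial matching map of $\Pi_{S/R}X_{\bt}$ is obtained by applying $\Pi_{S/R}$ to the corresponding partial matching map of $X_{\bt}$. By Lemma \ref{weillemma1}, $\Pi_{S/R}$ preserves submersive morphisms, \'etale morphisms, epimorphisms, and isomorphisms, and hence $\Pi_{S/R}X_{\bt}$ is again an NC Artin (resp. DM) $n$-hypergroupoid. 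Since geometric realization is computed pointwise in $A\in\Alg(R)$, we have $(\Pi_{S/R}|X_{\bt}|)(A)=|X_{\bt}|(A\ten_R S)=|X_{\bt}(A\ten_R S)|=|\Pi_{S/R}X_{\bt}|(A)$, so $\Pi_{S/R}F \simeq |\Pi_{S/R}X_{\bt}|$ is sqc $n$-geometric Artin (resp. DM). The identical argument applied to relative hypergroupoids $Y_{\bt}\to X_{\bt}$ shows that $\Pi_{S/R}$ preserves submersive, \'etale, and open morphisms of sqc $n$-geometric prestacks, using the corresponding preservation statements in Lemma \ref{weillemma1} and the characterisation of openness in Definition \ref{opendef} (monomorphisms on the nose are also preserved by $\Pi_{S/R}$ since it is defined by precomposition).

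For the non-sqc case (Definition \ref{inftygeomdef}), write an $n$-geometric (resp. $\infty$-geometric) Artin/DM prestack as a filtered colimit $F=\LLim_i F_i$ of open morphisms between sqc $n$-geometric (resp. sqc $m$-geometric for varying $m$) Artin/DM prestacks. Since $\Pi_{S/R}$ preserves filtered colimits (being pointwise evaluation at $-\ten_R S$), we obtain $\Pi_{S/R}F=\LLim_i \Pi_{S/R}F_i$; by the previous paragraph each $\Pi_{S/R}F_i$ is sqc $n$-geometric (resp. sqc $m$-geometric) of the appropriate type, and each transition map remains open, giving the required presentation. The main subtlety in the whole argument is just bookkeeping: verifying that the notions of partial matching object and of openness from Definitions \ref{npreldef} and \ref{opendef} are genuinely functorial under $\Pi_{S/R}$, which reduces to limit-preservation together with Lemma \ref{weillemma1} as above.
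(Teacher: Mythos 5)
Your proof is correct and fills in exactly the bookkeeping that the paper declares ``immediate from Lemma \ref{weillemma1}'': apply $\Pi_{S/R}$ levelwise to hypergroupoid presentations, use that it preserves limits/colimits of prestacks (hence matching objects and geometric realisations) and, via Lemma \ref{weillemma1}, affineness and the submersive/\'etale/epimorphism/open conditions, then pass to filtered colimits for the non-sqc case. The one minor inaccuracy is the framing of flatness as needed for $-\ten_R S$ to preserve finite limits --- preservation of (partial) matching objects already follows because $\Pi_{S/R}$ preserves all limits of prestacks and, by Lemma \ref{weillemma1}, sends affines to affines; but this does not affect the argument.
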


\subsubsection{Moduli of projective modules}\label{modprojmodsn}

\begin{definition}
 Given a non-unital associative $R$-algebra $A$, define the groupoid $\cI(A)$ of idempotents of $A$ as follows. 

The set $\Ob_{\cI}(A)$ of objects of $\cI(A)$ is defined to be 
 the set of idempotents
\[
 \{e \in A ~:~ e^2=e\}.
\]

The set $\Iso_{\cI}(A)$ of isomorphisms in $\cI(A)$ is defined to be 
the set of pairs
\[
 \{(f,g) \in A \by A~:~ fgf=f, ~ gfg=g\}.
\]
The source and target  of the isomorphism $(f,g)$ are defined to be the idempotents $gf$ and $fg$ respectively. The identity at $e \in \Ob_{\cI}(A)$ is given by $(e,e)$, and composition is given by multiplication, so   $(f',g') \circ (f,g):= (f'f,gg')$ whenever $fg= g'f'$. 
Note that the inverse of $(f,g)$ is simply $(g,f)$.
\end{definition}

\begin{remarks}\label{idemreprmks}
Observe that the functor $\Ob_{\cI}$ is represented by $R$, regarded as a non-unital algebra, since a non-unital $R$-algebra homomorphism $R \to A$ is determined by the image of $1 \in R$, which must be idempotent.

The functor $\Iso_{\cI}$ is also representable: take  the free non-unital $R$-algebra on generators $f,g$, and quotient by the relations $fgf=f$ and $gfg=g$.

Indeed, for any non-unital associative $R$-algebra $B$ which is finite and projective as an $R$-module, a similar construction shows that the functors $\Ob_{\cI}(B\ten_R-) $ and $\Iso_{\cI}(B\ten_R-) $ are representable. In particular, we may apply this to matrix rings $B=\Mat_n(R)$, in which case $B\ten_RA \cong \Mat_n(A)$, and $\cI(\Mat_n(A))$ is a groupoid of idempotent matrices. 
\end{remarks}

\begin{lemma}\label{idemmodlemma}
The groupoid $\cI(A)$ is equivalent to the groupoid of those right $A$-modules which are direct summands of $A$, while the groupoid $\cI(\Mat_n(A))$ is equivalent to the groupoid of those projective right $A$-modules which  admit a set of $n$ generators. 
\end{lemma}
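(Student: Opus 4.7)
The plan is to treat the two statements in sequence, with the second reducing to the first via classical Morita theory.

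For the first statement, I would construct an explicit functor $\Phi \co \cI(A) \to \cPer(A)$, where $\cPer(A)$ denotes the groupoid of direct summands of $A$ as a right $A$-module. On objects, set $\Phi(e) := eA$, which is a summand of $A$ with complementary summand $(1-e)A$. On morphisms, send $(f,g)$ (with source $gf$ and target $fg$) to the $A$-linear map $\lambda_f \co (gf)A \to (fg)A$ given by left multiplication by $f$; the relations $fgf=f$ and $gfg=g$ show that $\lambda_g$ is a two-sided inverse, so $\lambda_f$ is an isomorphism, and functoriality is a direct computation. To see essential surjectivity, given a decomposition $A = M \oplus N$ of right $A$-modules, write $1 = e + e'$ with $e \in M$, $e' \in N$; the projection $A \to M$ along $N$ is then necessarily left multiplication by $e$, and its idempotence forces $e^2 = e$ with $M = eA$. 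For full faithfulness, any right $A$-linear map $\phi \co eA \to e'A$ is determined by $f := \phi(e) \in e'A$ satisfying $fe = f$ and $e'f = f$; when $\phi$ is invertible with $g := \phi^{-1}(e')$, unwinding $\phi \phi^{-1} = \id$ and $\phi^{-1}\phi = \id$ on the generating idempotents gives exactly $gf = e$ and $fg = e'$, which implies the relations $fgf=f$, $gfg=g$. This recovers the pair $(f,g)$ and identifies $\Hom_{\cI(A)}(e,e')$ with $\Iso_{\cPer(A)}(eA,e'A)$.

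For the second statement, apply the first part with $A$ replaced by $\Mat_n(A)$: this yields an equivalence of $\cI(\Mat_n(A))$ with the groupoid of summands of $\Mat_n(A)$ as a right $\Mat_n(A)$-module. Now invoke the Morita equivalence between right $\Mat_n(A)$-modules and right $A$-modules, implemented on one side by $M \mapsto Me_{11}$ where $e_{11}$ is the idempotent with a single $1$ in the top-left entry. Under this equivalence, $\Mat_n(A)$ itself corresponds to $\Mat_n(A)e_{11} \cong A^n$, and so direct summands of $\Mat_n(A)$ correspond to direct summands of $A^n$. Since a right $A$-module is a summand of $A^n$ if and only if it is projective and admits a generating set of $n$ elements (a surjection from $A^n$ splits precisely when its image is projective), this gives the claimed equivalence.

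The most delicate point is the precise matching of morphism sets in Part 1, where one must keep careful track of which side $f$ and $g$ sit on in each product and verify that the relations $fgf=f$, $gfg=g$ are exactly what is needed for $\lambda_f$ to be invertible with specified source and target; all other steps are either Morita-theoretic or reduce to standard bookkeeping with idempotents.
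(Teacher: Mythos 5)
Your proof is correct and essentially matches the paper's. For the first statement, you construct exactly the same functor $e \mapsto eA$, $(f,g) \mapsto \lambda_f$ that the paper uses, just with essential surjectivity and full faithfulness spelled out more explicitly (the paper asserts these more briefly). For the second statement your route is a mild variant: you apply the first part to $\Mat_n(A)$ to get summands of $\Mat_n(A)$ as a right $\Mat_n(A)$-module, then invoke the Morita equivalence $M \mapsto Me_{11}$ to land on summands of $A^n$; the paper instead passes directly from $e \in \cI(\Mat_n(A))$ to the summand $eA^n$ of $A^n$ and identifies $\Iso_{\cI}(e,e')$ with the invertible elements of $\Hom_A(eA^n, e'A^n) \cong e'\Mat_n(A)e$. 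These are two phrasings of the same Morita-theoretic content -- yours makes the equivalence of module categories explicit, while the paper's just uses the resulting $\Hom$-identification -- so there is no substantive difference.
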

\begin{proof} 
We begin by defining a functor from $\cI(A)$ to the category of right $A$-modules.
On objects, this functor sends an idempotent $e$  to the right $A$-module $eA$. 
Note that if we have $(f,g) \in \Iso_{\cI}$ with source $e:=gf$ and target $e':=fg$, then $e'f=f$ and  $fe=f$. In particular, this means that left multiplication by $f$ defines a right $A$-module isomorphism from $eA$ to $e'A$, with inverse given by $g$.  It is then easy to see  that the resulting map
\[
 \Iso_{\cI}(e,e') \to \Hom_A(eA, e'A) 
\]
satisfies all the conditions required of  a functor.

Similarly, if $e \in \cI(\Mat_n(A))$, then $e(A^n)$ is a direct summand of the right $A$-module $A^n$,  or equivalently a projective $A$-module with $n$ generators,    and a pair  $(f,g) \in \Iso_{\cI}(e,e')$ is equivalent to an invertible element of 
\[
  \Hom_A(eA^n, e'A^n) \cong e'\Mat_n(A)e,
\]
giving rise to an equivalence from   $\cI(\Mat_n(A))$ to the groupoid of those right $A$-modules which are direct summands of $A^n$.
\end{proof}

Composing $\cI$  with the forgetful functor from unital to non-unital rings gives us a functor (also denoted $\cI$) from $\Alg(R)$ to groupoids, which we now study. 
\begin{lemma}\label{idsmoothlemma}
 The source and target maps $\Iso_{\cI} \to \Ob_{\cI}$ are both submersive maps of representable functors on $\Alg(R)$. The representable functor $\Ob_{\cI}$ is also submersive.
\end{lemma}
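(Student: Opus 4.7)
The plan is to use the explicit representing objects recorded in Remarks \ref{idemreprmks}, get the l.f.p. condition from their finite presentation, and check formal submersiveness directly. By that remark, $\Ob_{\cI}$ is represented by $T := R[x]/(x^2-x)$ and $\Iso_{\cI}$ by $S := R\langle f,g\rangle/(fgf-f,\, gfg-g)$, with the source map corresponding to $T \to S$, $x \mapsto gf$, and the target to $x \mapsto fg$. Both algebras are finitely presented (and $S$ is finitely presented over $T$), so their $\Hom$-functors commute with filtered colimits, and the l.f.p.\ part of submersiveness is automatic.

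For $\Ob_{\cI}$, formal submersiveness over $\Spec^{nc} R$ is exactly the classical fact that idempotents lift along surjections with nilpotent kernel; this follows by iterating the map $e \mapsto 3e^2 - 2e^3$, which pushes the error $e^2 - e$ into ever-higher powers of the kernel, and hence into zero in finitely many steps.

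The main step is formal submersiveness of the source map. Given a nilpotent surjection $C \onto D$ with kernel $I$, an element $(f,g) \in \Iso_{\cI}(D)$, and an idempotent $\tilde{e} \in C$ lifting $gf$, I would construct a lift $(\tilde{f},\tilde{g}) \in \Iso_{\cI}(C)$ of $(f,g)$ with $\tilde{g}\tilde{f} = \tilde{e}$ as follows. Pick arbitrary lifts $f_0, g_0 \in C$ of $f, g$, and replace them by $f_1 := f_0 \tilde{e}$ and $g_1 := \tilde{e} g_0$; these still reduce to $f$ and $g$ modulo $I$ because $f \cdot gf = fgf = f$ and $gf \cdot g = gfg = g$ in $D$. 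The point of this modification is that it forces $f_1 \tilde{e} = f_1$ and $\tilde{e} g_1 = g_1$, so the discrepancy $u := g_1 f_1 - \tilde{e}$ automatically lies in the nilpotent ideal $\tilde{e} I \tilde{e}$ of the unital ring $\tilde{e} C \tilde{e}$ (with unit $\tilde{e}$). Hence $g_1 f_1 = \tilde{e} + u$ is invertible in that corner; setting $\tilde{f} := f_1$ and $\tilde{g} := (g_1 f_1)^{-1} g_1$ yields $\tilde{g}\tilde{f} = \tilde{e}$ immediately, and the remaining relations collapse via $\tilde{f}\tilde{g}\tilde{f} = \tilde{f}\tilde{e} = \tilde{f}$ and $\tilde{g}\tilde{f}\tilde{g} = \tilde{e}\tilde{g} = \tilde{g}$ (using that $(g_1 f_1)^{-1} \in \tilde{e} C \tilde{e}$, so $\tilde{e}(g_1 f_1)^{-1} = (g_1 f_1)^{-1}$). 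The target map is handled symmetrically.

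The main obstacle is recognising the preliminary modification $f_0 \mapsto f_0 \tilde{e}$, $g_0 \mapsto \tilde{e} g_0$ that funnels the obstruction into the corner ring $\tilde{e} C \tilde{e}$, where a genuine inverse exists; without first corralling the error term into this corner, arbitrary lifts of $f$ and $g$ will not satisfy any of the three required relations even modulo $I^2$.
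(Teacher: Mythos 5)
Your proof is correct and follows essentially the same strategy as the paper's. The core idea — modify the lifts by $f_0\mapsto f_0\tilde e$ and $g_0\mapsto \tilde e g_0$ so that the discrepancy $u=g_1f_1-\tilde e$ lands in the nilpotent ideal $\tilde e I\tilde e$, then correct $g_1$ by an inverse — is exactly what the paper does: in the square-zero case the paper's correction factor $(1-x)$ coincides with your $(g_1f_1)^{-1}=\tilde e - u$, since $\tilde e g_1 = g_1$. The only difference is cosmetic: the paper reduces to square-zero kernels and uses the one-term formula $(1-x)$, while you work with general nilpotent kernels directly via the full geometric series in the corner ring $\tilde e C\tilde e$, which saves the (standard but unmentioned) induction step on the nilpotence degree. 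You should still record explicitly that $\tilde g$ reduces to $g$ modulo $I$ — it does, because $(g_1f_1)^{-1}\equiv\tilde e\pmod I$ and $eg=gfg=g$ in $D$ — but this is a routine check, not a gap.
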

\begin{proof}
 If a functor on  non-unital rings is represented by a non-unital ring $A$, then the associated functor on unital rings is represented by the unital ring $R \oplus A$, so  $\Iso_{\cI} \to \Ob_{\cI}$ are both representable as functors on $\Alg(R)$. The representing algebras are finitely generated, because their non-unital precursors were also. It remains to establish formal submersiveness.

We begin by considering the source map. Given a surjection $\phi \co A \to B$ in $\Alg(R)$ with square-zero kernel $J$, we need to show that 
\[
 \Iso_{\cI}(A) \xra{(s,\phi)} \Ob_{\cI}(A)\by_{\Ob_{\cI}(B)}\Iso_{\cI}(B)
\]
is surjective. Take an element $\tilde{e} \in \Ob_{\cI}(A)$ with image $e \in \Ob_{\cI}(B)$, and an isomorphism $(f,g) \in \Iso_{\cI}(B)$ with source $e$, so $e=gf$. We need to show that $(f,g)$ lifts to an element of $\Iso_{\cI}(A)$ with source $\tilde{e}$.

First, use surjectivity of $\phi$ to lift $f$ to an element $\tilde{f} \in  A$. Since $fe=f$, we may post-multiply by $\tilde{e}$, and thus  assume that $\tilde{f}\tilde{e}=\tilde{f}$. Next,  lift $g$ to an element $g' \in  A$; similarly, we may assume that $g'= \tilde{e}g'$. Now let $x= g'\tilde{f} - \tilde{e}$; this is an element of $J$, since its image in $B$ is $gf-e=0$. Setting $\tilde{g}:= (1-x)g'$ gives an element with
\[
 \tilde{g}\tilde{f}= (1-x)g'\tilde{f}= (1-x)(\tilde{e}+x)= \tilde{e}-xe+x=\tilde{e}
\]
(noting that $x^2=0$), so $(\tilde{f}, \tilde{g})$ has source $\tilde{e}$. Finally, note that
\[
 \tilde{g}\tilde{f}\tilde{f}= \tilde{e}\tilde{f}=\tilde{f}, \quad \tilde{f}\tilde{g}\tilde{f}= \tilde{f}\tilde{e}=\tilde{f},
\]
so $(\tilde{f}, \tilde{g})$ is indeed an element of $\Iso_{\cI}(A)$.

Formal submersiveness of   the target map now follows by symmetry, and it only remains to show that $\Ob_{\cI}$ is submersive. Given an idempotent $e \in \cI(B)$, lift it to an element $y \in A$, and set $\tilde{e}:= 3y^2-2y^3$. This is another lift of $e$, and it is necessarily idempotent because $( 3y^2-2y^3)^2 \equiv (3y^2-2y^3) \mod (y^2-y)^2$ and $(y^2-y)\in J$.
\end{proof}

\begin{definition}
 Define $\Mat_{\infty}(A)$ to be the filtered colimit $\LLim_n \Mat_n(A)$ of the non-unital ring homomorphisms $\Mat_n(A)\to \Mat_{n+1}(A)$ given by extension by $0$. Thus an object of the non-unital ring $\Mat_{\infty}(A)$ can be regarded as an $\N \by \N$-matrix with only finitely many non-zero entries. 
\end{definition}

The following is an immediate consequence of Lemma \ref{idemmodlemma}
\begin{proposition}\label{idmatprop}
 The groupoid $\cI(\Mat_{\infty}A)$ is equivalent to the groupoid $\cP(A) $ of finite projective right $A$-modules.
\end{proposition}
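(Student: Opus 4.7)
The plan is to deduce the statement directly from Lemma \ref{idemmodlemma} by passing to the filtered colimit in $n$, so the only real work is to check that this passage is compatible both on objects and on isomorphisms.

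First I would observe that the functors $\Ob_\cI$ and $\Iso_\cI$, as defined on non-unital rings by the pointwise formulas $\{e : e^2=e\}$ and $\{(f,g) : fgf=f,\ gfg=g\}$, commute with filtered colimits of non-unital rings: any idempotent in $\Mat_\infty(A)=\LLim_n\Mat_n(A)$ has only finitely many non-zero entries and hence lies in some $\Mat_n(A)$, and similarly for pairs $(f,g)$. Therefore
\[
\cI(\Mat_\infty A)\ \cong\ \LLim_n \cI(\Mat_n A)
\]
as groupoids (where the colimit is taken in groupoids, or equivalently, applied objectwise to $\Ob_\cI$ and $\Iso_\cI$).

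Next I would identify the transition functors. The non-unital ring homomorphism $\Mat_n(A)\to\Mat_{n+1}(A)$ sends $e$ to the block matrix $\bigl(\begin{smallmatrix} e & 0 \\ 0 & 0\end{smallmatrix}\bigr)$, and similarly on pairs $(f,g)$. Under the equivalence of Lemma \ref{idemmodlemma}, an idempotent $e\in\Mat_n(A)$ corresponds to the direct summand $e(A^n)$ of $A^n$; its image in $\Mat_{n+1}(A)$ corresponds to $\bigl(\begin{smallmatrix} e & 0\\ 0 & 0\end{smallmatrix}\bigr)(A^{n+1})=e(A^n)\oplus 0\cong e(A^n)$, and the same compatibility holds for morphisms. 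Thus the transition functor is, up to canonical isomorphism, the inclusion of the groupoid of finite projective right $A$-modules admitting $n$ generators into those admitting $n+1$ generators.

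Finally I would take the filtered colimit. Every finite projective right $A$-module $P$ admits some finite generating set, say of size $n$, so it appears in $\cI(\Mat_n A)$; and any isomorphism between two such modules is realised in some $\cI(\Mat_N A)$ for $N$ large enough to contain both generating sets. Hence $\LLim_n \cI(\Mat_n A)$ is equivalent to $\cP(A)$. Combining with the first step yields the proposition. The only genuinely non-formal point is the compatibility of the equivalences of Lemma \ref{idemmodlemma} with the extension-by-zero transition maps, and that is the direct block-matrix verification above.
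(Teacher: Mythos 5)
Your proof is correct and takes the same approach the paper has in mind: the paper states the proposition as an ``immediate consequence'' of Lemma \ref{idemmodlemma}, and your argument (commuting $\cI$ with the filtered colimit, checking compatibility of the Lemma's equivalences with the extension-by-zero transition maps, and noting every finite projective module and every isomorphism is realised at a finite stage) is precisely the implicit content of that remark.
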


\begin{proposition}\label{projgeomprop}
The simplicial functor on $\Alg(R)$ sending an associative algebra $A$ to the nerve $B\cP(A)$ is a submersive $1$-geometric Artin NC prestack.
\end{proposition}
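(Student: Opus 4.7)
The plan is to realise $B\cP$ as a filtered colimit of open morphisms of sqc $1$-geometric Artin NC prestacks, so that Definition \ref{inftygeomdef} applies. Combining Proposition \ref{idmatprop} with Lemma \ref{idemmodlemma}, the groupoid $\cP(A)$ is canonically $\LLim_n \cI(\Mat_n A)$, with the transition $\cI(\Mat_n A) \to \cI(\Mat_{n+1} A)$ induced by $e \mapsto \mathrm{diag}(e, 0)$. Since the nerve functor commutes with filtered colimits of groupoids, we have $B\cP \cong \LLim_n X^{(n)}$ for $X^{(n)} := B\cI(\Mat_n(-))$, and it suffices to show each $X^{(n)}$ is an sqc $1$-geometric Artin NC prestack with submersive structure map, and that each transition $X^{(n)} \to X^{(n+1)}$ is an open morphism.

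For each fixed $n$, I would check that $X^{(n)}$ is an NC Artin $1$-hypergroupoid in the sense of Definition \ref{npreldef}: its simplicial levels are representable NC affine schemes (iterated fibre products of $\Iso_{\cI}(\Mat_n(-)) \to \Ob_{\cI}(\Mat_n(-))$ by Remarks \ref{idemreprmks}), and since $X^{(n)}$ is the nerve of a groupoid, the partial matching maps $X^{(n)}_m \to \Hom_{s\Set}(\L^m_k, X^{(n)})$ are isomorphisms for $m \ge 2$ by uniqueness of horn fillers in groupoid nerves. For $m = 1$ they reduce to the source and target maps, which by Lemma \ref{idsmoothlemma} are submersive and are split epimorphisms via the identity section $e \mapsto (e, e)$. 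The structure map $X^{(n)} \to \Spec R$ is submersive by the final assertion of Lemma \ref{idsmoothlemma}.

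For the transitions, the natural equivalence of Lemma \ref{idemmodlemma} identifies $\cI(\Mat_n A) \to \cI(\Mat_{n+1} A)$ with the full subgroupoid inclusion of $A$-modules generated by $n$ elements into those generated by $n+1$. Since morphisms are intrinsic $A$-linear isomorphisms, this yields an injection on $\pi_0$ and an isomorphism on $\pi_1$ of the associated nerves, hence a levelwise monomorphism in the sense of Definition \ref{opendef}. By Remark \ref{openrmk}, submersiveness then reduces, at a nilpotent surjection $A \onto B$, to showing that any projective $A$-module whose reduction is $n$-generated is itself $n$-generated, which is a Nakayama-type argument by lifting generators through the nilpotent kernel. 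The l.f.p. condition follows because finitely generated projectives commute with filtered colimits of rings. Definition \ref{inftygeomdef} then furnishes $B\cP$ as a $1$-geometric Artin NC prestack whose structure map is submersive, filtered colimits of submersive maps along open inclusions being submersive.

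I expect the main obstacle is keeping careful track of the compatibility of the natural equivalences in Lemma \ref{idemmodlemma} across the transition maps $\cI(\Mat_n-) \to \cI(\Mat_{n+1}-)$, and verifying that the open-morphism conditions are compatible with passage to the filtered colimit --- in particular, ensuring that the Nakayama-type submersiveness argument, which is classical for modules but slightly more delicate for the groupoid of idempotents, interacts correctly with the homotopy-theoretic reformulation of Remark \ref{openrmk}.
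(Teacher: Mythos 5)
Your proof is correct and takes essentially the same route as the paper: decompose $B\cP \simeq \LLim_n B\cI(\Mat_n(-))$, use Remarks \ref{idemreprmks} and Lemma \ref{idsmoothlemma} to get a submersive Artin $1$-hypergroupoid at each stage, and verify openness of the transitions via Lemma \ref{idemmodlemma} (monomorphism) together with a Nakayama-type lifting of generators through the nilpotent kernel (formal submersiveness). Your explicit remark that the l.f.p.\ condition holds because the representing algebras are finitely presented is a point the paper leaves implicit, and where the paper lifts the splitting $\sigma$ explicitly you invoke projectivity of $M$ to re-split the lifted surjection $A^n \onto M$ — these are equivalent formulations of the same step.
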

\begin{proof}
By Proposition \ref{idmatprop}, the NC prestack $B\cP$ is equivalent to the filtered colimit $\LLim_n B\cI(\Mat_n(-))$ of prestacks. By Remarks \ref{idemreprmks}, these NC prestacks can be represented by simplicial NC affine schemes, and Lemma \ref{idsmoothlemma} implies that these simplicial NC affine schemes are submersive Artin $1$-hypergroupoids. 

It therefore remains only  to show that the maps $B\cI (\Mat_n(-))\to B\cI(\Mat_{n+1}(-))$ are open. That they are monomorphisms follows from Lemma \ref{idemmodlemma}. That the maps are submersive  amounts to saying that for any nilpotent surjection $A \to B$, if a  projective right $A$-module $M$ is such that the right $B$-module $M\ten_AB$ is a direct summand of $B^n$, then there is a compatible presentation of $M$ as a direct summand of $A^n$. To see this, first lift the $n$ induced generators of $M\ten_AB$ to elements of $M$; these will necessarily generate $M$ as an $A$-module. We then need a section $\tilde{\sigma} \co M \to A^n$ lifting the section $\sigma \co M\ten_AB \to B^n$; to construct this, start from the map $(\sigma, \id) \co M \to B^n\by_{M\ten_AB}M$, then lift to $\tilde{\sigma} \co M \to A^n$ using projectivity of $M$.
\end{proof}

\begin{remark}
 On restricting to commutative rings, the NC prestack $B\cP$ gives the geometric $1$-stack of projective modules. Note that geometricity of this prestack is not a consequence of Remarks \ref{geomrmks}, because we have not had to hypersheafify: the restriction of $B\cP$ to commutative rings is already an \'etale hypersheaf.   Proposition \ref{projgeomprop} is thus much more than an NC analogue of geometricity of the (commutative) stack of vector bundles, because we have a simplicial affine presentation of $B\cP$ which works without requiring any form of hypersheafification. 

By contrast, the standard presentation (requiring hypersheafification) of the stack of vector bundles over commutative rings is just $\coprod_n B\GL_n$, which amounts to taking the full subgroupoid of $B\cI(\Mat_{\infty})$ on diagonal idempotents.
\end{remark}

\begin{corollary}\label{projmodcor}
 Given a finite flat associative $R$-algebra $S$, the  simplicial functor on $\Alg(R)$ sending an associative algebra $A$ to the nerve $B\cP(S\ten_RA)$ is a submersive $1$-geometric Artin NC prestack.
\end{corollary}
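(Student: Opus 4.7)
The plan is to identify the functor $A \mapsto B\cP(S \ten_R A)$ as the restriction of scalars $\Pi_{S/R}(B\cP)$ and then apply the preservation results already in hand. By the definition of $\Pi_{S/R}$ in \S\ref{weilrestrn}, we have $(\Pi_{S/R} B\cP)(A) = B\cP(S \ten_R A)$, so the corollary is equivalent to showing that $\Pi_{S/R}(B\cP)$ is a submersive $1$-geometric Artin NC prestack. This reduction is the key move; everything else should be bookkeeping.

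Next I would invoke Proposition \ref{projgeomprop}, which exhibits $B\cP$ as the filtered colimit $\LLim_n B\cI(\Mat_n(-))$ of open inclusions of sqc $1$-geometric Artin NC prestacks, each of which arises as a submersive Artin $1$-hypergroupoid of affines (via the representability in Remark \ref{idemreprmks} and the submersiveness of the source, target and object maps from Lemma \ref{idsmoothlemma}). Since $\Pi_{S/R}$ is defined levelwise on the argument $A$, it commutes with colimits in the target simplicial sets, so
\[
 \Pi_{S/R}(B\cP) \;\simeq\; \LLim_n B\cI(\Mat_n(S \ten_R -)).
\]
For each $n$, Lemma \ref{stweillemma1} applied to the sqc $1$-geometric Artin NC prestack $B\cI(\Mat_n(-))$ yields that $B\cI(\Mat_n(S \ten_R -))$ is again sqc $1$-geometric Artin; concretely, this reflects the fact that $\Mat_n(S)$ is finite and flat over $R$ whenever $S$ is, so the representability argument of Remark \ref{idemreprmks} goes through verbatim for the non-unital algebra $\Mat_n(S) \ten_R A$.

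Finally, I need the transition maps to remain \emph{open} in the sense of Definition \ref{opendef}. The monomorphism condition survives because flatness of $S$ ensures that $S \ten_R -$ preserves the injectivity underlying $B\cI(\Mat_n(-)) \into B\cI(\Mat_{n+1}(-))$ at the level of idempotents and isomorphisms; the submersiveness of the transition maps is preserved by Lemma \ref{weillemma1}, which explicitly states that $\Pi_{S/R}$ preserves submersive morphisms, epimorphisms and open morphisms. Combining these observations gives the desired presentation of $\Pi_{S/R}(B\cP)$ as a filtered colimit of open maps of sqc $1$-geometric Artin NC prestacks, completing the proof. I expect no serious obstacle: the corollary is essentially a formal consequence of Proposition \ref{projgeomprop} combined with the preservation properties of $\Pi_{S/R}$ recorded in Lemmas \ref{weillemma1} and \ref{stweillemma1}, with the only minor point being to confirm that flatness of $S$ guarantees openness of the transition maps after restriction of scalars.
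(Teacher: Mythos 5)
Your proof is correct and takes essentially the same route as the paper, which simply says the result follows by combining Proposition \ref{projgeomprop} with Lemma \ref{stweillemma1}. You unpack the filtered-colimit bookkeeping that those two citations compress; the only redundancy is your hand-wavy flatness discussion of why openness survives, which is already covered directly by the statement of Lemma \ref{weillemma1} (it explicitly preserves open morphisms) and needs no separate argument.
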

\begin{proof}
This follows by combining Proposition \ref{projgeomprop} with Lemma \ref{stweillemma1}. 
\end{proof}

\begin{remark}
 Note that if we apply Corollary \ref{projmodcor} to the algebra $T_n(R)$ of upper triangular $n \by n$ matrices, then we establish $1$-geometricity of the prestack $B\cP( T_n\ten-)$ of flags of length $n$. 
\end{remark}

\subsubsection{Prestacks of dg categories}\label{dgcatsn2}

We now consider dg category-valued moduli functors, which are similar in spirit to the $(\infty,1)$-category-valued functors considered in \cite{balchinAugmentedHAG}.

The following adapts Definition \ref{npreldef} from simplicial sets to dg categories:

\begin{definition}\label{Artindgcatdef}
 Define an sqc NC Artin 
 dg category over $R$ to be a functor $\C$ from $\Alg(R)$ to dg categories with the following properties:
\begin{enumerate}
 \item the presheaf $\Ob \C$ of objects of $\C$ is affine, i.e. lies in $\Aff^{nc}(R)$;
\item for each $n \in \Z$, the functors $A \mapsto \coprod_{(x,y) \in \Ob \C(A)} \C(x,y)_n$ are affine;
\item for any nilpotent surjection $A \to B$ in $\Alg(R)$, the morphism
\[
 \C(A) \to \C(B)
\]
is a fibration   of dg categories for the model structure of \cite{tabuadaMCdgcat}; 

\item  
for any filtered system $\{A_i\}_i$ in $\Alg(R)$ with colimit $A$, the map
\[
 \LLim_i\C(A_i) \to \C(A)
\]
is a fibration of dg categories.
\end{enumerate}
Moreover, we say that $\C$ has $n$-truncated $\Hom$-complexes if $\C(x,y)_i=0$ for all $i> n$ (homological, not cohomological, grading) and all $x,y \in \Ob \C(A)$.
\end{definition}

\begin{remark}
 We may rephrase the third condition in Definition \ref{Artindgcatdef} as saying that for all  $x,y \in \C(A)$, the map $\C(A)(x,y) \to \C(B)(\bar{x},\bar{y})$ is a surjection of complexes, and that $\H_0\C(A)\to \H_0\C(B)$ is a fibration of categories, meaning that for any object $x \in \C(A)$ and any isomorphism $\gamma\co \bar{x} \to y$ in $\H_0\C(B)$, there is a lift of $\gamma$ to an isomorphism in $\H_0\C(A)$ with source $x$.

We may similarly rephrase the fourth condition as saying that for all  $x,y \in \LLim_i\C(A_i)$, the map $\LLim_i\C(A_i)(x,y) \to \C(A)(\bar{x},\bar{y})$ is a surjection of complexes, and that $ \LLim_i\H_0\C(A_i)\to \H_0\C(A)$ is a fibration of categories.

The reason we do not have an analogue of the epimorphism condition from Definition \ref{npreldef} is that every dg category is fibrant, so it is redundant when considering NC Artin dg categories $\C \to \Spec R$.
\end{remark}

As for instance in \cite{dmsch}, for any  dg category-valued moduli functor, there is an associated simplicial set-valued functor describing moduli of objects, constructed as follows. 

 We first form a dg category $\tau_{\ge 0}\C$ by taking good truncation of the  $\Hom$-complexes in non-negative homological degrees, so
\[
 (\tau_{\ge 0}\C)(x,y)_i:= \begin{cases} \C(x,y)_i & i>0 \\ \z_0\C(x,y) & i=0\\ 0 & i<0. \end{cases} 
\]
We can now apply the Dold--Kan denormalisation functor $N^{-1}$ (denoted by $K$ in \cite[8.4.4]{W}) to each $\Hom$-complex, giving simplicial sets $N^{-1}(\tau_{\ge 0}\C)(x,y)$. Applying the Alexander--Whitney map (cf. \cite[8.5.4]{W}) to composition morphisms in $\tau_{\ge 0}\C$ then gives  composition maps for $N^{-1}(\tau_{\ge 0}\C)(x,y)$, making it into a simplicial category.

Given the simplicial category $N^{-1}\tau_{\ge 0}\C$, we can then form the subcategory $\cW(N^{-1}\tau_{\ge 0}\C)$ of homotopy equivalences, so $\cW(N^{-1}\tau_{\ge 0}\C)(x,y) \subset N^{-1}\tau_{\ge 0}\C(x,y)$ is the union of path components over invertible morphisms in $\H_0\C(x,y)$. This simplicial category can be regarded as a form of $\infty$-groupoid, and we can then take the nerve to give a simplicial set $\bar{W}\cW(N^{-1}\tau_{\ge 0}\C)$. By comparing universal properties and homotopy groups, an alternative characterisation is as the derived mapping space
\[
 \bar{W}\cW(N^{-1}\tau_{\ge 0}\C) \simeq \map_{dg\Cat}(\Z, \C) 
\]
 from the dg category $\Z$, consisting of  one object $*$ with $\End(*)=\Z$, or as 
the derived mapping space
\[
\map_{s\Cat}(*, N^{-1}\tau_{\ge 0}\C)
\]
in simplicial categories.

\begin{proposition}\label{coreprop}
 If $\C$ is an sqc NC Artin 
 dg category over $R$ with $n$-truncated $\Hom$-complexes, then the simplicial presheaf 
\[
 A \mapsto \map_{dg\Cat}(\Z, \C(A))
\]
is an sqc  $(n+1)$-geometric NC Artin prestack.
\end{proposition}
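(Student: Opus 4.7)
The plan is to realise $F$ as the geometric realisation of an explicit NC Artin $(n+1)$-hypergroupoid $X_\bt \in \Aff^{nc}(R)^{\Delta^{\op}}$, constructed via Dold--Kan denormalisation. Using the identification $F(A) \simeq \bar{W}\cW(N^{-1}\tau_{\geq 0}\C(A))$ stated just before the proposition, the $p$-simplices of $F(A)$ admit the concrete description as tuples $(x_0, \ldots, x_p; g_1, \ldots, g_p)$ where $x_i \in \Ob\C(A)$ and $g_i \in N^{-1}(\tau_{\geq 0}\C(A))(x_{i-1}, x_i)_{i-1}$ satisfies the condition that its underlying edge represents an isomorphism in $\H_0\C(A)$.

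For each $p$, classical Dold--Kan denormalisation writes $N^{-1}(M)_{i-1}$ as a finite direct sum $\bigoplus_{\sigma\co[i-1] \twoheadrightarrow [k]} M_k$, so a tuple $(g_1, \ldots, g_p)$ is a finite collection of elements of the Hom-complexes in prescribed degrees. Since $\Ob\C$ is affine and each functor $A \mapsto \coprod_{(x,y)} \C(x,y)_k$ is affine, the naive simplicial presheaf parametrising such tuples is levelwise affine. To enforce the $\cW$-condition representably, I would add quasi-inverse data in the spirit of Lemma \ref{idsmoothlemma}: for the $1$-simplex component of each edge $g_i$, adjoin a cocycle $g_i' \in \z_0\C(x_i,x_{i-1})$ and elements $h_i^L \in \C(x_{i-1},x_{i-1})_1$, $h_i^R \in \C(x_i,x_i)_1$ with $\delta h_i^L = g_i'g_i - \id_{x_{i-1}}$ and $\delta h_i^R = g_i g_i' - \id_{x_i}$. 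The resulting $X_p$ is then affine; simplicial structure maps come from Alexander--Whitney on $\bar{W}$ combined with the obvious operations on the witness data.

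To verify the partial matching condition $X_m \to M^{\Lambda^m_k}_m X$ is a submersive epimorphism, I would reduce (using Lemma \ref{sq0lemma}) to lifting against square-zero extensions $A \onto B$ in $\Alg(R)$. The fibration condition (3) in Definition \ref{Artindgcatdef} guarantees that Hom-complexes $\C(A)(x,y) \to \C(B)(\bar x,\bar y)$ are levelwise surjections and that homotopy-isomorphisms lift, which is exactly what is needed to extend a partial horn together with its invertibility witnesses to a filler. Homotopy l.f.p.\ of these maps follows from condition (4). For $m > n+1$, the matching map is an isomorphism: the $n$-truncation hypothesis forces $N^{-1}(\tau_{\geq 0}\C)(x,y)_{m-1} = 0$ in the relevant summands, so any horn of such dimension has a unique filler, showing $X_\bt$ truncates at level $n+1$. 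Finally, the geometric realisation of $X_\bt$ recovers $F$ by construction, and openness of filtered inclusions (as $\C$ itself varies) is not required in the sqc case.

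The main obstacle will be the careful bookkeeping around the invertibility witnesses: simply adding $(g_i', h_i^L, h_i^R)$ introduces extra affine variables that must be shown to remain compatible with the Alexander--Whitney composition in higher simplices and, crucially, to lift along square-zero extensions without obstruction. The reason this works is that condition (3) gives surjectivity of the \emph{entire} Hom-complexes, not merely of cocycles, so both the inverse $g_i'$ and its witnessing homotopies $h_i^L, h_i^R$ can be lifted simultaneously; this is the non-commutative analogue of the idempotent-lifting argument in Lemma \ref{idsmoothlemma}. Once this is established, the remaining conditions follow from routine unravelling.
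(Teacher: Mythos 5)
There is a genuine gap: the claim that ``the geometric realisation of $X_\bt$ recovers $F$ by construction'' is not justified, and it is precisely the step the paper's machinery is designed to guarantee. Adjoining explicit quasi-inverse data $(g_i',h_i^L,h_i^R)$ produces a \emph{different} simplicial object from $\bar{W}\cW(N^{-1}\tau_{\ge 0}\C)$; there is a forgetful map from the witnessed object to $\bar{W}\cW(N^{-1}\tau_{\ge 0}\C)$, but to conclude that the realisations agree you must show this map is a levelwise trivial (or at least weak) equivalence, and that requires exactly the coherence of witnesses in higher simplicial degrees that you flag as the ``main obstacle'' and then do not resolve. The paper sidesteps this entirely by constructing a Reedy cofibrant cosimplicial resolution $J^{\bt}$ of the one-object dg category $\Z$ (with $J^1$ essentially Tabuada's category $\cK$, which \emph{is} your ``morphism plus inverse witnesses''), so that $\Hom_{dg\Cat}(J^{\bt},\C(A))$ is a model for $\map_{dg\Cat}(\Z,\C(A))$ \emph{by} \cite[Theorem 5.4.9]{Hovey} rather than by hand; the hypergroupoid conditions then come from controlling the partial latching maps $J(\L^m_k)\to J^m$, and no separate comparison with $F$ is needed. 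So while your guess about what kind of data a level of the hypergroupoid should encode is on the right track, the proposal is missing the argument that makes the answer correct.

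There is also a more local error in the truncation step: $N^{-1}(\tau_{\ge 0}\C)(x,y)_{m-1}=\bigoplus_{[m-1]\onto[k]}\C(x,y)_k$ does \emph{not} vanish for $m-1>n$; what vanishes is the normalised part $N(N^{-1}\tau_{\ge 0}\C(x,y))_{m-1}=\C(x,y)_{m-1}$. The correct statement is that the matching maps of the simplicial abelian group $N^{-1}\tau_{\ge 0}\C(x,y)$ are isomorphisms above degree $n$ (Dold--Kan coskeletality), which after the $\bar{W}$ shift gives isomorphisms on the partial matching maps of the nerve above degree $n+1$. In the paper this comes out cleanly because the latching maps $J(\L^m_k)\to J^m$ are shown to be free extensions in chain degrees $\ge m-2$, so the induced map on $n$-truncated $\Hom$-complexes is an isomorphism once $m-2\ge n$. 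You would need to make the analogous precise statement, and additionally account for the degrees in which the extra witness variables live, before the matching-map isomorphisms for $m>n+1$ can be asserted.
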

The proof will proceed by calculating  $\map_{dg\Cat}(\Z, \C(A))$ as a  derived left function complex, so we first need a  cosimplicial resolution of the dg category $\Z$.
\begin{lemma}
 There exists a cosimplicial  resolution $J^{\bt}$ of $\Z$ in $dg\Cat$ which is cofibrant for the Reedy model structure, and such that for $r \ge 1$ the cosimplicial latching maps $L^rJ \to J^r$ (in the notation of \cite[\S 5.2]{Hovey}) are: 
\begin{enumerate}
 \item isomorphisms on objects, and
\item given on the underlying graded categories by freely adding finitely many homomorphisms in chain degrees $\ge n-1$. 
\end{enumerate}
\end{lemma}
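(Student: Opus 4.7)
The plan is to construct $J^{\bullet}$ inductively on the cosimplicial degree $r$, setting $J^0 := \Z$. Assuming that the truncation $J^{<r}$ has been built as a Reedy cofibrant partial cosimplicial object, degreewise weakly equivalent to $\Z$, I form the latching dg category $L^r J$ and matching dg category $M^r J$, together with the natural comparison map $\ell \colon L^r J \to M^r J$. Because each $J^s$ for $s < r$ is weakly equivalent to $\Z$, and because the constant cosimplicial object $c\Z$ has latching and matching objects both equal to $\Z$, the map $\ell$ is a weak equivalence of dg categories. I then factor $\ell$ as a cofibration $L^r J \hookrightarrow J^r$ followed by a trivial fibration $J^r \twoheadrightarrow M^r J$ via the small object argument, but restricted to generating cofibrations of a specific form.

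The controlled factorization uses only those generating cofibrations in Tabuada's model structure on $dg\Cat$ that freely adjoin a single morphism in some chain degree; in particular, no generating cofibration that introduces an object is used. Condition (1) is then automatic, since $J^0$ has a single object and no step of the construction introduces new objects. For condition (2), I would analyze the connectivity of $\ell$ by examining the $\Hom$-complexes of $L^r J$ and $M^r J$ between the distinguished objects. Via a direct comparison with the cosimplicial abelian groups arising through the Dold--Kan equivalence (using that $\Z$ sits in chain degree $0$), the cofibre of $\ell$ on $\Hom$-complexes is concentrated in chain degrees $\ge n-1$, so the factorization need only adjoin free generators in those degrees.

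The main obstacle will be to verify the connectivity claim with the correct bound, and to ensure that only finitely many generators are adjoined at each stage. Finiteness follows because the matching object is assembled from finitely many copies (indexed by the codegeneracy maps out of $[r]$) of previously constructed dg categories, each of which was itself built by finitely many free adjunctions; combined with the finiteness of the generating cells that remain in the relevant degrees, this ensures that each cofibrant replacement step introduces only finitely many new homomorphisms. The connectivity bound follows from a cell-by-cell analysis together with the observation that the constant cosimplicial object $c\Z$ provides a baseline against which new generators of $J^r$ must be added precisely in the chain degrees where the Dold--Kan normalization of a cosimplicial abelian group first acquires non-trivial content at cosimplicial degree $r$, and these sit in the range controlled by $n-1$.
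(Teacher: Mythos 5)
Your claim that $\ell \colon L^rJ \to M^rJ$ is a weak equivalence is false, and this undermines the connectivity bound on which the rest of the argument rests. The latching object $L^rJ$ is not weakly equivalent to $\Z$: already $L^1J = J^0 \coprod J^0 = \Z \coprod \Z$, and the fold map $\Z\coprod\Z\to\Z$ is not a quasi-equivalence (it is not quasi-fully faithful, since $\Hom(x,y)=0$ on one side and $\Z$ on the other). More generally, the paper computes $\map_{dg\Cat}(L^rJ,\cB) \simeq \map_{s\Set}(\pd\Delta^r, \map_{dg\Cat}(\Z,\cB))$, so the fibre of $\map(L^rJ,\cB)\to\map(\Z,\cB)$ over an object $x$ is an $(r-2)$-fold loop space of $\map(\Z,\cB)_x$, not contractible, whereas the fibre for $M^rJ$ (which \emph{is} weakly equivalent to $\Z$) is contractible. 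Your premise about the latching object of the constant diagram $c\Z$ is also wrong: $L^r(c\Z)$ is the colimit over a discrete (for $r=1$) or nontrivial latching category and is not $\Z$.

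Because $\ell$ is not a weak equivalence, factoring $\ell$ by a generic small-object argument gives no control on the chain degrees of the adjoined generators; in fact, in the missing degrees precisely the nontrivial homotopy of the boundary $\pd\Delta^r$ must be filled, and the small object argument would happily add cells in all degrees. The paper's proof does something sharper and essentially unavoidable: it identifies a single specific cycle $a \in \z_{r-2}(L^rJ)(j,j)$ (coming from the identity functor, or the invertibility witness when $r=2$) whose class must be killed, attaches exactly one free generator $h$ in degree $r-1$ with $\delta h = a$ (or $a-1$), and then verifies separately that $h$ can be sent compatibly into $M^rJ$ using that $M^rJ\to\Z$ is a quasi-equivalence. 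This one-cell-at-a-time construction is what yields both the finiteness and the degree bound; these do not come for free from the factorization axiom. Your heuristic about "where Dold--Kan normalization first acquires content" points at the right phenomenon, but as stated it is not a proof, and the incorrect weak-equivalence claim cannot be used to shortcut the explicit obstruction analysis.

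Finally, note that even the base case $r=1$ in the paper is not trivial: $J^1$ is Tabuada's dg category $\cK$ and requires five freely adjoined morphisms (in chain degrees $0,0,1,1,2$), which is compatible with the bound $\ge r-1 = 0$ but not with "no generators below the connectivity threshold"; this again shows the bound is the result of a careful construction, not automatic.
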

\begin{proof}[Proof of lemma (sketch).]
We may set $J^0$ to be the one-object category $\Z$, which is cofibrant, and then set $J^1$ to be the dg category $\cK$ of \cite{tabuadaMCdgcat} which is quasi-equivalent to $\Z$ and has two objects, with the map $\Z \coprod \Z \to J^1$ being a cofibration freely generated by five homomorphisms of various degrees. 

We now proceed inductively, so assume that we have constructed the cosimplicial resolution $J^{\bt}$ to level $r-1$, and thus need a factorisation $L^rJ \to J^r \to M^rJ$ of the map $L^rJ \to M^rJ$ from the $r$th latching object to the $r$th matching object, such that $L^rJ \to J^r$ is a cofibration and the map $J^r \to \Z$ is a quasi-equivalence. 

Now, for any dg category $\cB$, taking iterated colimits gives
\[
\map_{dg\Cat}(L^rJ,\cB)\simeq \map_{s\Set}(\pd \Delta^r,\map_{dg\Cat}(\Z,\cB)),
\]
and thus the homotopy fibre $F_x$ of $\map(L^rJ,\cB)\to \map(\Z,\cB) $ over an object $x \in \cB$ is given by the Dold--Kan denormalisation 
$
 N^{-1}\tau_{\ge 0}(\cB(x,x)_{[r-2]})
$
for $r \ge 3$, while for $r=2$ we have to restrict to quasi-isomorphisms, giving the homotopy fibre as 
$
 (N^{-1}\cB(x,x))\by_{\H_0\cB(x,x)}\H_0\cB(x,x)^{\by}
$.

For an object $j \in J$, the identity functor on $J$ thus gives us an element of $\H_{r-2}(L^rJ)(j,j)$ which is invertible when $r=2$. Take a representative $a \in \z_{r-2}(L^rJ)(j,j)$ and form $J^r$ by freely attaching a morphism in chain degree $r-1$ with $\delta h =a$, or $\delta h=a-1$ when $r=2$. Now look at the homotopy fibre $G_x$ of $\map(J^r,\cB) \to   \map(\Z,\cB)$ over $x$. By construction, the map $G_x \to F_x$ is homotopic to a constant and has homotopy fibres  $N^{-1}\tau_{\ge 0}(\cB(x,x)_{[r-1]})$, from which it follows that $G_x$ must be contractible.

We have therefore constructed a latching map $L^rJ \to J^r$ by freely adding a homomorphism in chain degree $r-1$, with the map $J^r \to \Z$ being a quasi-equivalence. It remains only to show that we have a compatible map $J^r \to M^rJ$, but this follows because $M^rJ \to \Z$ is a quasi-equivalence (seen for instance by analysing the cosimplicial Dold--Kan correspondence), so the image of $a$ in $M^rJ$ is homologically trivial, and we may map $h$ to a suitable homotopy.
\end{proof}

\begin{proof}[Proof of Proposition \ref{coreprop}.]
 Since every dg category is fibrant in the model structure of \cite{tabuadaMCdgcat}, we may apply \cite[Theorem 5.4.9]{Hovey}, calculating $\map$ as the derived left function complex  $\oR\Map_l$. In other words,  a model for $\map_{dg\Cat}(\Z, \C(A))$ is given by the simplicial set
\[
i \mapsto  \Hom_{dg\Cat}(J^i, \C(A)),
\]
for the cosimplicial cofibrant resolution $J^{\bt}$ of $\Z$ in $dg\Cat$ given by the lemma.

By Kan extension, $J^{\bt}$ gives rise to a colimit-preserving functor $J \co s\Set \to dg\Cat$ with $J(\Delta^r)=J^r$; in particular, $L^rJ=J(\pd \Delta^r)$. Once we observe that the inclusion $\L^r_k \into \pd \Delta^r$ of a horn in the boundary of an $r$-simplex is a pushout of the map $\pd\Delta^{r-1} \to \Delta^{r-1}$, it follows from the lemma that the partial latching maps $J(\L^r_k) \to J^r$ are cofibrations freely generated by finitely many homomorphisms in chain degrees $\ge r-2$, for $r \ge 2$. In particular, the morphisms $\sigma_{\le r-3}J(\L^r) \to \sigma_{\le r-3}J^r$ of brutal truncations are isomorphisms for $r \ge 3$.

Since $J^{\bt}$ is a resolution of $\Z=J^0$, we also know that the coface maps $\pd^i \co J^{r-1} \to J^r$ are all quasi-equivalences. We may use this to deduce that the partial latching maps $J(\L^r_k) \to J^r$ are all trivial cofibrations, as follows. For $r=1$, the partial latching maps are already coface maps. If the statement holds for all $1 \le r<m$, then $J(K) \to J(L)$ is a trivial cofibration for all trivial cofibrations $K \to L$ of simplicial sets generated in degrees $<m$. In particular, each face of $\L^m_k$ induces a trivial cofibration $J^{m-1} \to  J(\L^m_k)$ whose composition with $J(\L^m_k) \to J^m$  is a coface map $J^{m-1} \to J^m$, and hence a quasi-equivalence. By the two-out-of-three property, we deduce that the partial latching map  $J(\L^m_k) \to J^m$ is a quasi-equivalence, and hence a trivial cofibration.

For any fibration $\cB \to \cD$ of dg categories, we may therefore deduce that the map
\[
 \Hom_{dg\Cat}(J^{\bt}, \cB) \to \Hom_{dg\Cat}(J^{\bt}, \cD)
\]
of simplicial sets is a Kan fibration, since $\Hom_{s\Set}(\L^r_k,  \Hom_{dg\Cat}(J^{\bt}, \cB))=  \Hom_{dg\Cat}(J(\L^r_k), \cB)$.

The isomorphisms $\sigma_{\le r-3}J(\L^r) \to \sigma_{\le r-3}J^r$   above then imply that the good truncations $\tau_{\le r-2}J(\L^r_k) \to \tau_{\le r-2}J^r$ are all isomorphisms.  For any fibration $\cB \to \cD$ of dg categories with $n$-truncated $\Hom$-complexes, it follows that  the Kan fibrations
\[
 \Hom_{dg\Cat}(J^{\bt}, \cB) \to \Hom_{dg\Cat}(J^{\bt}, \cD)
\]
are in fact relative $(n+1)$-hypergroupoids, since the partial matching maps are isomorphisms in degrees $\ge n+2$. 

Since $\C$ is  an sqc NC Artin 
 dg category and every dg category is fibrant,  this implies that the partial matching maps of the simplicial NC presheaf  $ \Hom_{dg\Cat}(J^{\bt},\C(-))$ are all formally submersive epimorphisms, and are isomorphisms in degrees $\ge n+2$. Since the $J(\L^m_k) \to J^m$ are all finitely generated, it follows   that the partial matching maps of   $\Hom_{dg\Cat}(J^{\bt},\C(-))$ are all l.f.p., and hence submersive. Thus we have shown that the model $\Hom_{dg\Cat}(J^{\bt},\C(-))$ for  $\map_{dg\Cat}(\Z, \C(-))$ is an
 an NC Artin  $(n+1)$-hypergroupoid, as required.
\end{proof}

\begin{definition}\label{opendefcat} 
 We say that a morphism  $F \to G$ of sqc   NC  Artin dg categories over $R$ is
\begin{enumerate}
 \item formally submersive if for all for all nilpotent surjections $A \to B$ of $R$-algebras,
 the dg functor
\[
\eta \co  F(A) \to G(A)\by^h_{G(B)}F(B)
\]
to the homotopy fibre product is  surjective on homotopy classes of objects;

\item l.f.p. if for any filtered system $\{C_i\}_i$ of $R$-algebras with colimit $C$, the dg functor
\[
 \LLim_i F(C_i) \to F(C)\by^h_{G(C)}\LLim_i G(C_i)
\]
is a quasi-equivalence;
 
 \item submersive if it is formally submersive and l.f.p.;
 
 \item open if  it is submersive  and
 a levelwise monomorphism in the sense that for all  $A \in \Alg(R)$, the  map $F(A) \to G(A)$ induces an injection on homotopy classes of objects  and  quasi-isomorphisms  on $\Hom$-complexes.
\end{enumerate}
\end{definition}

\subsubsection{Moduli of perfect complexes}\label{modperfsn}

\begin{definition}
 Given a DGAA $A$, write $\per_{dg}(A)$ for the dg category of cofibrant  perfect complexes of right $A$-modules; for alternative characterisations, see \cite[\S 4.6]{kellerModelDGCat}.
\end{definition}

\begin{definition}\label{cPidef}
For $A \in \Alg(R)$, define a dg category $\cP_{\uline{i}}(A)$ as follows. We let the objects of $\cP_{\uline{i}}(A)$  consist of pairs $(e, \delta)$, where $e \in \prod_{n \in \Z} \Mat_{i_n}(A)$ is idempotent and $\delta \in \prod_{n \in \Z}\Mat_{i_n,i_{n-1}}$ satisfying 
\begin{align*}
 0&= \delta_n \circ \delta_{n+1} \in \Mat_{i_{n+1},i_{n-1}},\\
 \delta_n &= e_{n-1} \circ \delta \circ e_n.
\end{align*}
Thus $\delta$ gives precisely the data to turn the graded projective  right $A$-module $\bigoplus_n (e_nA^{i_n})_{[-n]}$ into a chain complex $P(e,\delta)$ of right $A$-modules, and hence a perfect complex. 

We then define the morphisms in $\cP_{\uline{i}}(A)$ by setting
\[
 \cP_{\uline{i}}(A)((e,\delta),(e',\delta')):=\HHom_A(P(e,\delta),P(e',\delta'));
\]
explicitly,
\[
 \cP_{\uline{i}}(A)((e,\delta),(e',\delta'))_r:= \prod_{n \in \Z} \{ f \in \Mat_{i_n,i_{n+r}}(A) ~:~ f_n= e_{n+r}\circ f_n \circ e_n\},
\]
with differential $\delta(f):= \delta \circ f \mp f \circ \delta$.  
\end{definition}
This set-valued functor of objects of $\cP_{\uline{i}}$ is clearly representable by a finitely generated $R$-algebra,  with generators corresponding to the co-ordinates of $(e,\delta)$ and relations corresponding to the equations they must satisfy. Similarly, each functor of morphisms of fixed degree in $\cP_{\uline{i}}$ is clearly representable by a finitely generated $R$-algebra.

\begin{proposition}\label{cPiArtinprop}
 The functors $\cP_{\uline{i}} $ are  sqc NC Artin 
 dg categories with truncated $\Hom$-complexes.
\end{proposition}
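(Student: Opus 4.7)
The plan is to verify, in order, the four conditions of Definition \ref{Artindgcatdef} together with the truncation of $\Hom$-complexes, working throughout from the explicit presentation in Definition \ref{cPidef}. I assume the multi-index $\uline{i}$ has finite support $[a,b]$, so that all products $\prod_{n\in\Z}$ appearing in the definition are in fact finite products.

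Conditions (1) and (2) of Definition \ref{Artindgcatdef}, and the truncation property, are immediate by inspection. The object functor $\Ob\cP_{\uline{i}}$ is cut out from the product $\prod_{n} \Mat_{i_n} \times \prod_n \Mat_{i_n,i_{n-1}}$ (which is representable as an NC affine scheme over $R$) by finitely many polynomial relations $e_n^2 = e_n$, $e_{n-1}\delta_n e_n = \delta_n$ and $\delta_n\delta_{n+1} = 0$, hence is representable by a finitely generated associative $R$-algebra. In the same way, for each $r$, the disjoint union $\coprod_{(x,y)}\cP_{\uline{i}}(A)(x,y)_r$ is representable, fibred over $\Ob\cP_{\uline{i}} \times \Ob\cP_{\uline{i}}$ by the locally closed condition $f = e'fe$. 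The degree-$r$ piece involves only entries in positions $n \to n+r$ with $n, n+r \in [a,b]$, so vanishes for $|r| > b-a$; hence the $\Hom$-complexes are $(b-a)$-truncated.

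The main work is condition (3), fibration at a nilpotent surjection $A \onto B$ with kernel $I$. Surjectivity of each $\cP_{\uline{i}}(A)(x,y) \to \cP_{\uline{i}}(B)(\bar x,\bar y)$ is immediate: lift a matrix $f$ entry-wise to $\tilde f'$, then replace it by $e'\tilde f' e$, which still lifts $f$ (since $f = e'fe$ in $B$) and satisfies the idempotent compatibility. The subtler task is lifting isomorphisms in $\H_0$. Given $(e,\delta)\in\cP_{\uline{i}}(A)$ reducing to $(\bar e,\bar\delta)$ and an isomorphism $\bar\phi:(\bar e,\bar\delta)\to(e',\delta')$ in $\H_0\cP_{\uline{i}}(B)$, I first lift the idempotent $e'$ to some $\tilde e' \in \prod_n\Mat_{i_n}(A)$ using Lemma \ref{idsmoothlemma}, and then lift a representative chain map $f$ of $\bar\phi$ to a graded morphism $\tilde f: P(e) \to P(\tilde e')$. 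It then remains to construct a differential $\tilde\delta'$ on $P(\tilde e')$ reducing to $\delta'$. For this I start from any lift $\delta_0$ of $\delta'$ with $\delta_0 = \tilde e'\delta_0 \tilde e'$, so that $\delta_0^2 \in I\cdot \HHom_A(P(\tilde e'), P(\tilde e'))_{-2}$, and filter $I$ by powers; on each associated graded layer the deformation equation reduces to a linear equation $[\delta_0,\eta] = -\delta_0^2$ in the endomorphism complex of the perfect $B$-module $P(e',\delta')$, and the obstruction class $[\delta_0^2] \in \H_{-2}$ vanishes because $\delta'^2 = 0$ over $B$ together with the equivalence $\bar\phi$ identifies this endomorphism complex with that of $P(\bar e,\bar\delta)$, where $\delta_0^2$ is a coboundary of the lifted differential $\delta$. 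Having constructed $(\tilde e',\tilde\delta')$, a further round of the same obstruction argument adjusts $\tilde f$ by an element of $I\cdot \cP_{\uline{i}}(A)((e,\delta),(\tilde e',\tilde\delta'))_1$ to produce a chain map lifting $\bar\phi$, which is then a homotopy equivalence because its reduction is.

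Condition (4), fibration at filtered colimits $\LLim_i A_i \to A$, reduces to the same two statements (surjectivity of $\Hom$-complexes and isomorphism lifting in $\H_0$), and both follow at once from the preceding arguments together with the finite presentation of the representing algebras: every matrix element or homotopy entering the isomorphism-lifting step is supported in finitely many coordinates, hence defined over some $A_i$. The chief obstacle is clearly the isomorphism lifting in (3), because although the idempotent lifting is handled cleanly by Lemma \ref{idsmoothlemma}, promoting a homotopy equivalence over $B$ to an actual lift of the Maurer--Cartan element $\delta'$ over $A$ requires the obstruction-theoretic argument above, and the comparison of obstruction classes under the equivalence $\bar\phi$ is what ties the whole construction together.
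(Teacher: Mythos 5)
Your proof is essentially correct and follows the same overall outline as the paper's for conditions (1), (2), (4) and truncation, but it takes a genuinely different route for the crucial isomorphism-lifting step in condition (3). The paper does \emph{not} lift the differential $\eps'$ and the morphism $\theta'$ in two separate stages. After lifting the idempotent $f$ to $\tilde f$ via Lemma \ref{idsmoothlemma} and choosing arbitrary lifts $\eps'$ and $\theta'$, it packages the two constraints (that $\eps'$ square to zero, and that $\theta'$ be a chain map) into a single obstruction cycle $(\eps'\circ\eps',\ \eps'\circ\theta' - \theta'\circ\delta)$ living in
\[
 \z^{1}\cone\bigl(\HHom_B(P(f,\eps), P(f,\eps)\ten_BI) \xra{-\circ \theta}\HHom_B(P(e,\delta), P(f,\eps)\ten_BI)\bigr),
\]
and observes that this cone is acyclic precisely because $\theta$ is a quasi-isomorphism. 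The obstruction therefore bounds, correcting both $\eps'$ and $\theta'$ in one step. Your sequential version --- first kill the obstruction to $\delta_0^2=0$, then adjust $\tilde f$ to be a chain map --- requires at the first stage the claim that the obstruction class in $\Ext^2_B(P(e',\delta'), P(e',\delta')\ten_B I)$ transfers under $\bar\phi$ to the corresponding obstruction class for $P(\bar e,\bar\delta)$, which vanishes since $(\bar e,\bar\delta)$ lifts. That is true, but you assert it rather than prove it (the sentence ``$\delta_0^2$ is a coboundary of the lifted differential $\delta$'' does not make the functoriality of obstruction classes under equivalences explicit), and this is precisely the step the paper's cone construction is designed to avoid. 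So the paper's argument is tighter: it turns what you phrase as ``transfer of an obstruction class along a quasi-isomorphism'' into mere acyclicity of a mapping cone, with nothing left to compare. One small point in your favour: you handle a general nilpotent kernel by filtering by powers of $I$, whereas the paper silently reduces to the square-zero case; but that reduction is standard and the paper's ``we need the morphism... to be a fibration'' makes the intended induction clear.
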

\begin{proof}
 We have already seen that the object and morphism functors associated to $\cP_{\uline{i}} $ are representable  by finitely generated $R$-algebras. They are therefore affine, and for any filtered system $A_j$ of $R$-algebras, the map $\LLim_j\cP_{\uline{i}}(A_j) \to \cP_{\uline{i}}(\LLim_j A_j)$ is an isomorphism, hence a fibration. We know that the values of $i_n$ are zero outside some interval $[M,N]$, which implies that $\cP_{\uline{i}} $ has $(N-M)$-truncated $\Hom$-complexes.
 
Thus the only remaining condition to verify from Definition \ref{Artindgcatdef} is that for any 
 nilpotent surjection $\phi \co A \to B$ in $\Alg(R)$ with square-zero kernel $I$, we need  the morphism $\cP_{\uline{i}}(A)\to \cP_{\uline{i}}(B)$ to be  a fibration of dg categories. 
 
 Given objects $(e,\delta),(e',\delta') \in \cP_{\uline{i}}(A)$, the morphism
\begin{align*}
 \cP_{\uline{i}}(A)((e,\delta),(e',\delta'))_r&\to \cP_{\uline{i}}(B)(\phi(e,\delta),\phi(e',\delta'))_r\\
 \prod_{n \in \Z} e_{n+r}\Mat_{i_n,i_{n+r}}(A)e_n &\to \prod_{n \in \Z}\phi(e_{n+r})\Mat_{i_n,i_{n+r}}(B)\phi(e_n)
\end{align*}
is clearly surjective from the surjectivity of $A \to B$. 

It remains to show that given $(e,\delta) \in \cP_{\uline{i}}(A)$, every isomorphism $\theta \co (e,\delta) \to (f,\eps)$ in $\H_0\cP_{\uline{i}}(B)$ lifts to an isomorphism $\tilde{\theta}$ in  $\cP_{\uline{i}}(A)$ with source $(e,\delta)$. 
By Lemma \ref{idsmoothlemma}, we may lift $f$ to an idempotent $\tilde{f}$ in $\prod_{n \in \Z} \Mat_{i_n,i_{n+r}}(A)$. Now choose any lifts of $\theta$ and $\eps$ to elements  $\theta' \in \prod_{n \in \Z} f_n\Mat_{i_n}(A)e_n $ and $\eps' \in \prod_{n \in \Z} f_{n-1}\Mat_{i_n,i_{n-1}}(A)e_n $ respectively, and consider the obstructions to $\theta' \co (e,\delta) \to (\tilde{f},\eps')$ being a closed morphism in 
 $\cP_{\uline{i}}(A)$. Standard arguments show that the obstruction $( \eps' \circ \eps',  \eps' \circ \theta'-\theta' \circ \delta)$ lies in
\[
 \z^{1}\cone(\HHom_B(P(f,\eps), P(f,\eps)\ten_BI) \xra{-\circ \theta}\HHom_B(P(e,\delta), P(f,\eps)\ten_BI));
\]
since $\theta$ is a quasi-isomorphism, this cone complex is acyclic, so that cycle is the boundary of an element $(a,b)$, and then setting $(\tilde{\eps}, \tilde{\theta}):= (\eps'-a, \theta'-b)$ gives a lift for which the obstruction vanishes. It then remains only to observe that the morphism $\tilde{\theta} \co P(e,\delta)\to P(\tilde{f}, \tilde{\eps})$ is a quasi-isomorphism since the maps
$\tilde{\theta}I= \theta I \co P(e,\delta)I\to P(\tilde{f}, \tilde{\eps})I $ and $ \theta \co P(e,\delta)/P(e,\delta)I\to P(\tilde{f}, \tilde{\eps})/P(\tilde{f}, \tilde{\eps})I$ are both quasi-isomorphisms.
\end{proof}

\begin{corollary}\label{perdgArtin}
The NC presheaf $\per_{dg}\co \Alg(R) \to dg\Cat$  is quasi-equivalent to a filtered colimit
\[
 \LLim_{\uline{i} \in I} \cP_{\uline{i}}
\]
of  sqc NC Artin 
 dg categories with truncated $\Hom$-complexes, with all the transition maps being open in the sense of Definition \ref{opendefcat}.
\end{corollary}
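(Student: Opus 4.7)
The plan is to index $I$ by the partially ordered set of finitely supported sequences $\uline{i}=(i_n)_{n\in\Z}$ of non-negative integers, with $\uline{i}\le\uline{j}$ meaning $i_n\le j_n$ for all $n$. For $\uline{i}\le\uline{j}$, the transition dg functor $\cP_{\uline{i}}\to\cP_{\uline{j}}$ is defined by extension by zero, sending $(e,\delta)\mapsto (e\oplus 0,\delta\oplus 0)$ via the block inclusions $\Mat_{i_n}(A)\hookrightarrow \Mat_{j_n}(A)$ chosen so that the complementary diagonal entries all vanish. On $\Hom$-complexes this induces the identifications
\[
e_{n+r}\Mat_{i_n,i_{n+r}}(A)e_n \;=\; (e_{n+r}\oplus 0)\Mat_{j_n,j_{n+r}}(A)(e_n\oplus 0),
\]
so each transition functor is an isomorphism on $\Hom$-complexes, hence fully faithful at the dg level.

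Next, I would verify that each transition is open in the sense of Definition \ref{opendefcat}. The monomorphism condition on homotopy classes of objects follows from the isomorphism on $\Hom$-complexes: any closed invertible morphism in $\H_0\cP_{\uline{j}}(A)$ between images $(e\oplus 0,\delta\oplus 0)$ and $(e'\oplus 0,\delta'\oplus 0)$, together with its quasi-inverse and the homotopies witnessing invertibility, all factor through the corresponding $\Hom$-complexes of $\cP_{\uline{i}}(A)$. The l.f.p. condition is automatic because every piece of defining data involves only finitely many matrix entries, so the transition commutes with filtered colimits of $R$-algebras. For formal submersiveness, given a nilpotent surjection $A\onto B$, an object $(e',\delta')\in\cP_{\uline{j}}(A)$ equipped with an equivalence in $\H_0\cP_{\uline{j}}(B)$ to $(e\oplus 0,\delta\oplus 0)$ from $\cP_{\uline{i}}(B)$, the fibration property from Proposition \ref{cPiArtinprop} lets us replace $(e',\delta')$ by an isomorphic object mapping exactly to $(e\oplus 0,\delta\oplus 0)$; using Lemma \ref{idsmoothlemma} to lift idempotents and conjugating by a $1\mod\ker(A\to B)$ unit, we may further arrange $e' = \tilde{e}\oplus 0$ for a lift $\tilde{e}$ of $e$. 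The constraint $\delta'=e'\delta'e'$ then forces $\delta'=\tilde{\delta}\oplus 0$, and $(\tilde{e},\tilde{\delta})\in\cP_{\uline{i}}(A)$ provides the required lift.

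Finally, to establish the quasi-equivalence $\LLim_{\uline{i}}\cP_{\uline{i}}(A)\simeq \per_{dg}(A)$, I would argue fully faithful and essentially surjective separately. Full faithfulness is immediate from the definition, since the $\Hom$-complex in $\cP_{\uline{i}}(A)$ is precisely $\HHom_A(P(e,\delta),P(e',\delta'))$, which is the mapping complex computed in $\per_{dg}(A)$, and filtered colimits along the transition maps preserve this by the isomorphism on $\Hom$-complexes. For essential surjectivity at the level of $\H_0$, I would invoke the standard characterisation of perfect complexes as retracts of bounded complexes of finitely generated free $A$-modules (cf.\ \cite[\S 4.6]{kellerModelDGCat}): any perfect complex is quasi-isomorphic to a bounded complex $P_\bullet$ of finitely generated projective modules, and writing each $P_n$ as $e_nA^{i_n}$ for an idempotent $e_n$ and lifting the differentials $P_{n+1}\to P_n$ through the inclusions $P_n\hookrightarrow A^{i_n}$ yields an object $(e,\delta)$ in $\cP_{\uline{i}}(A)$ with $P(e,\delta)$ quasi-isomorphic to the original.

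The main obstacle I anticipate is the formal submersiveness of the transition maps, because it requires combining the idempotent-lifting argument of Lemma \ref{idsmoothlemma} with the differential-lifting argument in the proof of Proposition \ref{cPiArtinprop} while tracking the block structure carefully; once this is handled the rest is bookkeeping.
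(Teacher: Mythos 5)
Your proof is correct and follows the same indexing and overall strategy as the paper, but diverges in two ways that are worth noting. First, the paper introduces the intermediate full dg subcategories $\per_{dg,\uline{i}}(A) \subset \per_{dg}(A)$ (bounded complexes of projectives with $i_n$ generators in degree $n$), shows $\per_{dg} \simeq \LLim \per_{dg,\uline{i}}$ and $\cP_{\uline{i}} \to \per_{dg,\uline{i}}$ is a quasi-equivalence, and then checks openness of the inclusions $\per_{dg,\uline{i}} \to \per_{dg,\uline{j}}$; you instead work directly with the affine coordinate models $\cP_{\uline{i}}$ and the extension-by-zero functors, which is equivalent but cleaner for verifying full faithfulness at the strict dg level. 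Second, and more substantively, the two proofs of formal submersiveness of the transition maps are genuinely different. The paper's is obstruction-theoretic: for a nilpotent surjection $A \to B$ with kernel $I$, the obstruction to lifting a complex $P$ from $\per_{dg,\uline{i}}(B)$ to $\per_{dg,\uline{i}}(A)$ lies in $\Ext^2_B(P,P\ten_B I)$, and lifts (once unobstructed) form a torsor under $\Ext^1_B(P,P\ten_B I)$; since neither group sees $\uline{i}$, a lift in the $\uline{j}$ column gives one in the $\uline{i}$ column with matching reduction. Your argument is concrete at the matrix level: use the fibration property of $\cP_{\uline{j}}(A) \to \cP_{\uline{j}}(B)$ to arrange that the lift reduces exactly to $(e\oplus 0,\delta\oplus 0)$, lift $e$ via Lemma \ref{idsmoothlemma} to an idempotent $\tilde e$ and conjugate by a unit $u \equiv 1 \bmod J$ (using that idempotents agreeing mod a nilpotent ideal are conjugate by such a unit) to force the lifted idempotent into block form $\tilde e \oplus 0$, and then the constraint $\delta'' = e''\delta'' e''$ automatically confines $\delta''$ to the $\uline{i}$ block. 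Both proofs are valid; the paper's is shorter and abstracts away the matrix bookkeeping, while yours is more explicit and avoids invoking the torsor argument. The only small caution is that the conjugacy of idempotents agreeing mod a nilpotent ideal is a standard fact but is not literally a statement of Lemma \ref{idsmoothlemma}, so you should either cite it separately or note that it follows from the explicit unit $u = \tilde e e'' + (1-\tilde e)(1-e'')$.
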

\begin{proof}
For $A \in \Alg(R)$, the dg category $\per_{dg}(A)$ is quasi-equivalent to the dg category of bounded chain complexes of  finite projective right $A$-modules. We let $I \subset \N_0^{\Z}$ consist of sequences with only finitely many non-zero elements, with the partial order $\uline{i} \le \uline{j}$ if and only if $i_n\le j_n$ for all $n \in \Z$. We can then set $\per_{dg,\uline{i}}(A)$ to be the full dg subcategory of $\per_{dg}(A)$ consisting of complexes $M$ of projective $A$-modules where each $M_n$ admits a set of $i_n$ generators, so 
\[
 \per_{dg}(A) \simeq \LLim_{\uline{i} \in I}\per_{dg,\uline{i}}(A).
\]

The dg functors $\cP_{\uline{i}}\to\per_{dg,\uline{i}} $ given by $(e, \delta) \mapsto P(e, \delta)$ are clearly quasi-equivalences, Proposition \ref{cPiArtinprop} provides the required filtered system of sqc NC Artin 
 dg categories with truncated $\Hom$-complexes.

It only remains to show that  the maps $\per_{dg,\uline{i}}\to \per_{dg,\uline{j}}$ are open. They are monomorphisms by definition, and we have already seen that the prestacks $\cP_{\uline{i}}$ are l.f.p., so the only remaining condition to verify from Definition \ref{opendefcat} is that the transition maps  $\per_{dg,\uline{i}}\to \per_{dg,\uline{j}}$ are formally submersive. Given a nilpotent surjection $A \to B$ in $\Alg(R)$ with kernel $I$, we may argue as in the proof of Proposition \ref{cPiArtinprop} to see that the obstruction to lifting a complex $P$ from   $\per_{dg,\uline{i}}(B)$ to $\per_{dg,\uline{i}}(A) $ lies in $\Ext^2_B(P,P\ten_BI)$, and that if this vanishes the homotopy classes of lifts are a torsor for $\Ext^1_B(P,P\ten_BI) $; since these do not depend on $\uline{i}$, the transition maps are all formally submersive.
\end{proof}

\begin{definition}\label{Perfdef}
 We define the NC prestack $\Perf\co \Alg(R) \to s\Set$ to be the nerve of the core of the simplicial category  associated to $\per_{dg}$. In the notation of \S \ref{dgcatsn2},   $\Perf(A):= \bar{W}\cW(N^{-1}\tau_{\ge 0}\per_{dg}(A))$, and this is a model for $ \map_{dg\Cat}(\Z, \C(A))$. 
\end{definition}

\begin{corollary}\label{Perfgeomcor}
The NC prestack $\Perf$ is  Artin $\infty$-geometric.
\end{corollary}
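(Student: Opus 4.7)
The plan is to combine Corollary \ref{perdgArtin} with Proposition \ref{coreprop} and check that the construction $\C \mapsto \bar{W}\cW(N^{-1}\tau_{\ge 0}\C)$ sends the filtered system $\{\cP_{\uline{i}}\}_{\uline{i}\in I}$ with open transition maps to a filtered system of open morphisms of sqc geometric NC Artin prestacks with geometricity level depending on $\uline{i}$. Writing $\Perf_{\uline{i}}(A):= \map_{dg\Cat}(\Z,\cP_{\uline{i}}(A))$, Proposition \ref{coreprop} applied to the sqc NC Artin dg category $\cP_{\uline{i}}$, which has $(N_{\uline{i}}-M_{\uline{i}})$-truncated $\Hom$-complexes (where $[M_{\uline{i}},N_{\uline{i}}]$ is the support of $\uline{i}$), yields that each $\Perf_{\uline{i}}$ is an sqc $n_{\uline{i}}$-geometric NC Artin prestack for $n_{\uline{i}}=N_{\uline{i}}-M_{\uline{i}}+1$.

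Next, I would establish that $\Perf \simeq \LLim_{\uline{i}\in I}\Perf_{\uline{i}}$. Since filtered colimits are exact, they commute with good truncation, with Dold--Kan denormalisation $N^{-1}$, and with passage to the core subcategory, and the nerve functor $\bar{W}$ also commutes with filtered colimits. Therefore
\[
\Perf(A) = \bar{W}\cW(N^{-1}\tau_{\ge 0}\per_{dg}(A)) \simeq \LLim_{\uline{i}} \bar{W}\cW(N^{-1}\tau_{\ge 0}\cP_{\uline{i}}(A)) = \LLim_{\uline{i}}\Perf_{\uline{i}}(A),
\]
using the equivalence $\per_{dg}\simeq \LLim_{\uline{i}}\cP_{\uline{i}}$ of Corollary \ref{perdgArtin}.

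It then remains to verify that each transition map $\Perf_{\uline{i}}\to \Perf_{\uline{j}}$ is open in the sense of Definition \ref{opendef}. The key point is that the corresponding transition map $\cP_{\uline{i}}\to \cP_{\uline{j}}$ is open in the sense of Definition \ref{opendefcat}, so it is a levelwise monomorphism (inducing an injection on homotopy classes of objects and quasi-isomorphisms on $\Hom$-complexes) and submersive. The functor $\bar{W}\cW(N^{-1}\tau_{\ge 0}-)$ converts: injection on homotopy classes of objects into injection on $\pi_0$; quasi-isomorphisms of $\Hom$-complexes into weak equivalences of simplicial mapping sets, hence isomorphisms on $\pi_i$ for $i\ge 1$; and formal submersiveness plus l.f.p. in the dg-categorical sense into the corresponding conditions at the level of prestacks, essentially because a quasi-equivalence of dg categories induces a weak equivalence of the derived mapping space $\map_{dg\Cat}(\Z,-)$, so the homotopy fibre products in Definitions \ref{opendefcat} and \ref{opendef} match up. Combining all of this, $\Perf$ is exhibited as a filtered colimit of open morphisms of sqc $n_{\uline{i}}$-geometric Artin NC prestacks, with $n_{\uline{i}}$ varying, which is precisely the definition of an $\infty$-geometric Artin NC prestack.

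The main (minor) obstacle is the last step: careful bookkeeping to translate the three clauses of Definition \ref{opendefcat} into the three clauses of Definition \ref{opendef} under the construction $\C\mapsto \bar{W}\cW(N^{-1}\tau_{\ge 0}\C)$. The l.f.p. clause is the most delicate, because one must check that the relevant homotopy fibre products of dg categories pass to homotopy fibre products of simplicial sets after taking cores; this reduces to the fact that quasi-equivalences of dg categories induce weak equivalences of the associated $\infty$-groupoids, together with the compatibility of $\bar{W}\cW(N^{-1}\tau_{\ge 0}-)$ with the formation of homotopy pullbacks along fibrations of dg categories, which is guaranteed by the properties of $\C$ required by Definition \ref{Artindgcatdef}.
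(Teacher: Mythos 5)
Your proposal follows the paper's argument exactly: express $\Perf$ as the filtered colimit $\LLim_{\uline{i}\in I}\map_{dg\Cat}(\Z,\cP_{\uline{i}})$ using Corollary \ref{perdgArtin}, and apply Proposition \ref{coreprop} to see that this is a filtered colimit of open morphisms of sqc $n$-geometric Artin NC prestacks with varying $n$. The extra bookkeeping you carry out at the end — verifying that $\bar{W}\cW(N^{-1}\tau_{\ge 0}-)$ converts the three clauses of Definition \ref{opendefcat} into those of Definition \ref{opendef} — is implicitly subsumed in the paper under the phrase ``Proposition \ref{coreprop} implies'' and is correct.
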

\begin{proof}
By Corollary \ref{perdgArtin}, we have 
\[
 \Perf \simeq \LLim_{i \in I} \map_{dg\Cat}(\Z,\cP_i),
\]
and Proposition \ref{coreprop} implies that this 
a filtered colimit of open morphisms of Artin $n$-geometric NC prestacks, for varying $n$, as required.
\end{proof}

\subsection{Non-commutative derived $n$-geometric prestacks} \label{derivedgeomprestacksn}

\subsubsection{Definitions}\label{deriveddefsn}

We now  extend the constructions of the previous section to form non-commutative enhancements of derived moduli stacks, starting from our non-commutative derived affine schemes as the building blocks.

\begin{definition}\label{rKanh}\label{mnh}
For a simplicial object $X$ in a model category  $\C$, write  $\oR\Hom_{s\Set}(-,X)\co s\Set^{\op} \to \C$ for the  homotopy right Kan extension of $X$. This  preserves homotopy limits instead of limits, and can be realised as $\Hom_{s\Set}(-, \oR X)$, where $\oR X$ is a Reedy fibrant replacement for $X$ in $s\C$. 
\end{definition}

\begin{definition}
 Say that a morphism $X \to Y$ in $DG^+\Aff^{nc}(R)$ is a $\pi^0$-epimorphism if the induced map $\pi^0X \to \pi^0Y$ in $\Aff^{nc}(\H_0R)$ has a section.
\end{definition}

The following definition arises by substituting our context in \cite[Definitions \ref{stacks2-npreldef} and \ref{stacks2-nptreldef}]{stacks2}. 
\begin{definition}\label{npreldef2}
Given  a simplicial diagram  $Y\in DG^+\Aff^{nc}(R)^{\Delta^{\op}}$, define a homotopy derived NC Artin (resp. NC Deligne--Mumford) $n$-hypergroupoid  over $Y$ to be a morphism $X_{\bt}\to Y_{\bt}$ in $ DG^+\Aff^{nc}(R)^{\Delta^{\op}}$, for which  the homotopy  partial matching maps
\[
X_m \to \oR\HHom_{s\Set}(\L^m_k, X)\by_{ \oR\HHom_{s\Set}(\L^m_k, X) }^hY_m 
\]
are  homotopy  submersive (resp. homotopy \'etale) $\pi^0$- epimorphisms for all  $k\ge 0,m\ge 1$, and are weak equivalences for all $m>n$ and all $k\ge 0$. When $Y= \Spec R$ is the final object, we will simply refer to $X$ as a derived homotopy NC Artin (resp. NC Deligne--Mumford)  $n$-hypergroupoid.
%
\end{definition}

\begin{definition}\label{geomdef2}
 Define a derived NC prestack $F$ to be  strongly quasi-compact (sqc) $n$-geometric   NC Artin (resp. NC Deligne--Mumford) if it arises as the homotopy colimit of a derived homotopy NC Artin (resp. NC Deligne--Mumford)  $n$-hypergroupoid $X$. Say that a morphism $G \to F$ of such prestacks is submersive (resp. \'etale) if it arises as the geometric realisation of a relative homotopy NC Artin (resp. NC Deligne--Mumford)  $n$-hypergroupoid $ Y \to X$ with $Y_0 \to X_0$ homotopy submersive (resp. \'etale).
\end{definition}

\begin{remarks}\label{geomrmks2}
Definition \ref{geomdef2} 
gives an analogue of the strongly quasi-compact  $n$-geometric derived Artin or Deligne--Mumford  stacks in \cite{hag2}. More precisely, if $X$ is an sqc $n$-geometric  Artin (resp. Deligne--Mumford) derived NC prestack, then restricting the functor to CDGAs and taking \'etale hypersheafification yields a strongly quasi-compact   $n$-geometric  Artin (resp. Deligne--Mumford) derived stack, by \cite[Theorem \ref{stacks2-bigthm}]{stacks2}. 

Unravelling the definitions, if $A \to B$ is a nilpotent surjection in $dg_+\Alg(R)$ and $F$ is  an sqc derived $n$-geometric Artin  NC prestack $F$, then $F$ is equivalent to a Reedy fibrant limit-preserving functor $X$  for which the map $X(A) \to X(B)$ of simplicial sets is a Kan fibration. If $C \to B$ is another morphism in  $dg_+\Alg(R)$, this means that the fibre product $X(A)\by_{X(B)}X(C)$ is a homotopy fibre product. 
It thus follows that the map
\[
 F(A\by_BC) \to F(A)\by_{F(B)}^hF(C)
\]
is a weak equivalence, so $F$ satisfies the NC analogue of the homogeneity property from \cite{drep} (cf. \S \ref{hgssn} below).

Similarly to Remarks \ref{geomrmks},  the homotopy l.f.p. condition for submersive morphisms then amounts to saying that for any filtered system $\{B_i\}_i$ in $dg_+\Alg(R)$ with colimit $B$, and any morphism $C \to B$, the natural map 
\[
 \LLim_i F( C\by_B^h B_i) \to F(C)\by_{F(B)}^h\LLim_iF(B_i)
\]
 is a weak equivalence.

Moreover, the weak equivalence conditions imply that for $F$ an  sqc derived  $n$-geometric Artin NC prestack and any $A \in \Alg(\H_0R)$, the homotopy groups  of $F(A)$ vanish above degree $n$. Beware, however, that the same will not be true for arbitrary $A \in \dg_+\Alg(R)$, although repeated application of the homogeneity property above to the Postnikov tower $\{A/\tau_{>n}A\}_n$  implies that if the homotopy groups of $A$ vanish above degree $m$, then those of $F(A)$ vanish above degree $m+n$. 
The epimorphism conditions amount to saying that if $\oR F$ is a Reedy fibrant replacement for $F$, then $F(A)$ is a Kan complex for all $ A\in \Alg(\H_0R)$. 

Finally, if $f \co X \to Y$  is a trivial derived homotopy  NC Artin $n$-hypergroupoid, with $Y$ fibrant and $f$ a Reedy fibration, then the epimorphism conditions imply that $X(A) \to Y(A)$ is a trivial fibration for all $A \in \Alg(\H_0R)$, while the submersiveness conditions implies that for all nilpotent surjections $A \to B$, the map $X(A)\to Y(A)\by_{Y(B)}X(B)$ is a trivial fibration, with the target being a model for the homotopy fibre product. Combining these properties and applying them to the Postnikov tower, it follows that the map $X(A) \to Y(A)$ is a weak equivalence whenever the homology groups of $A$ are bounded; passing to homotopy limits, if follows that $X(A) \xra{\sim} Y(A)$ for all $A \in dg_+\Alg(R)$. Thus   trivial derived homotopy  NC Artin $n$-hypergroupoids become weak equivalences of the associated prestacks.
\end{remarks}

\begin{definition}
 We say that a morphism $F \to G$ of sqc derived $n$-geometric    Artin (resp.  Deligne--Mumford) NC prestacks is submersive (resp. \'etale) if it arises as the geometric realisation of a relative derived homotopy NC Artin (resp. NC Deligne--Mumford)  $n$-hypergroupoid $X \to Y$ for which the map $X_0 \to Y_0$ is submersive (resp. \'etale).
\end{definition}

\begin{definition}\label{opendef2}
 We say that a morphism  $F \to G$ of sqc derived  $n$-geometric    Artin (resp.  Deligne--Mumford) NC prestacks is open if 
\begin{enumerate}
 \item it is submersive (resp. \'etale), and
\item it is a levelwise monomorphism in the sense that for all  $A \in \Alg(R)$, the  map $F(A) \to G(A)$ induces an injection on $\pi_0$ and an isomorphism on the homotopy groups $\pi_i$ for $i \ge 1$.
\end{enumerate}
\end{definition}

\begin{remark}\label{openrmk2}
With the same reasoning as Remark \ref{openrmk}, 
 we may rephrase submersiveness (resp. \'etaleness) of a morphism $F \to G$ of sqc derived $n$-geometric Artin (resp. Deligne--Mumford) NC prestacks as saying that for all nilpotent surjections $A \to B$ of $R$-algebras, the map
\[
\eta \co  F(A) \to G(A)\by^h_{G(B)}F(B)
\]
to the homotopy fibre product is surjective on $\pi_0$ (resp. a weak equivalence).

Similarly, \'etaleness  of a morphism $F \to G$ of  $n$-geometric     Deligne--Mumford NC prestacks amounts to saying that $\eta$ is a weak equivalence. 

If $F \to G$ is a levelwise monomorphism, then 
both reduce to saying that $\pi_0F(A) \to \pi_0G(A)\by_{\pi_0G(B)}\pi_0F(B)$ is  an isomorphism. 
In particular, this means that if we regard derived  $n$-geometric     Deligne--Mumford NC prestacks as being derived $n$-geometric     Artin NC prestacks, the two potential notions of openness from Definition \ref{opendef2} agree.
\end{remark}

We are now in a position to be able to pass beyond the strongly quasi-compact setting:
\begin{definition}\label{inftygeomdef2}
 Define a derived  NC prestack $F$ to be $n$-geometric    Artin (resp.  Deligne--Mumford) if it arises as a filtered colimit of open morphisms of  sqc derived $n$-geometric    Artin (resp.  Deligne--Mumford) NC prestacks (for fixed $n$). Define a derived  NC prestack $F$ to be $\infty$-geometric    Artin (resp.  Deligne--Mumford) if it arises as a filtered colimit of sqc derived  $m$-geometric    Artin (resp.  Deligne--Mumford) NC prestacks, for varying $m$.  
\end{definition}

The following is now an immediate consequence of Lemma \ref{weillemma2}:
\begin{lemma}\label{stweillemma2}
If $S\in  \Alg(R)$ is finite and flat as an $R$-module, then the  restriction of scalars functor $\prod_{S/R}$ on $ DG^+\Aff^{nc}(R)^{\wedge}$  preserves the subcategories of  $n$-geometric   and $\infty$-geometric  Artin and  Deligne--Mumford derived NC prestacks, with $\prod_{S/R}F$ being sqc whenever $F$ is so.
\end{lemma}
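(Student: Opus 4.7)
The plan is to unwind Definitions \ref{geomdef2} and \ref{inftygeomdef2} and check that the functor $\Pi_{S/R}$ interacts well with each structural ingredient, with Lemma \ref{weillemma2} doing most of the work.

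First I would handle the sqc case. Suppose $F \simeq |X_{\bt}|$ for some derived homotopy NC Artin (or Deligne--Mumford) $n$-hypergroupoid $X_{\bt}$ with each $X_m \in DG^+\Aff^{nc}(R)$. The key observation is that $\Pi_{S/R}$ is defined on prestacks by precomposition with $-\ten_R S$ on $dg_+\Alg(R)$. Since precomposition commutes with all (homotopy) limits and all colimits in the prestack variable, we have a canonical identification $\Pi_{S/R}|X_{\bt}| \simeq |\Pi_{S/R}X_{\bt}|$, so it suffices to show that $\Pi_{S/R}X_{\bt}$ is again a derived homotopy NC Artin (resp.\ Deligne--Mumford) $n$-hypergroupoid. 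Again because $\Pi_{S/R}$ preserves (homotopy) limits in the prestack variable, the partial matching objects behave functorially:
\[
\Pi_{S/R}\bigl(\oR\HHom_{s\Set}(\L^m_k,X)\by_{\oR\HHom_{s\Set}(\L^m_k,X)}^h Y_m\bigr) \simeq \oR\HHom_{s\Set}(\L^m_k,\Pi_{S/R}X)\by_{\oR\HHom_{s\Set}(\L^m_k,\Pi_{S/R}X)}^h \Pi_{S/R}Y_m.
\]
Thus the partial matching map for $\Pi_{S/R}X_{\bt}$ is obtained by applying $\Pi_{S/R}$ to the partial matching map of $X_{\bt}$.

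Next I would invoke Lemma \ref{weillemma2} to conclude: that lemma says $\Pi_{S/R}$ preserves derived affine NC schemes (so each $\Pi_{S/R}X_m$ is still affine), and preserves homotopy submersive, homotopy \'etale, $\pi^0$-epimorphisms, and open morphisms. It remains to see that $\Pi_{S/R}$ preserves weak equivalences in $DG^+\Aff^{nc}(R)$; this holds because $S$ is flat over $R$, so $A \ten_R S \to B \ten_R S$ is a quasi-isomorphism whenever $A \to B$ is. Hence the partial matching maps of $\Pi_{S/R}X_{\bt}$ are submersive (resp.\ \'etale) $\pi^0$-epimorphisms for all $m \ge 1$, and weak equivalences for $m > n$, proving $\Pi_{S/R}F$ is sqc $n$-geometric of the same type.

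Finally I would pass from the sqc case to the $n$-geometric and $\infty$-geometric cases. If $F = \LLim_i F_i$ is a filtered colimit along open morphisms of sqc $n$-geometric (resp.\ sqc $m$-geometric for varying $m$) NC prestacks, then $\Pi_{S/R}$ preserves filtered colimits of prestacks (being defined pointwise), so $\Pi_{S/R}F = \LLim_i \Pi_{S/R}F_i$. Each $\Pi_{S/R}F_i$ is sqc of the appropriate type by the previous paragraph, and the transition maps remain open by Lemma \ref{weillemma2} together with the characterisation of openness via levelwise monomorphisms (Remark \ref{openrmk2}), which is clearly preserved by precomposition with $-\ten_R S$. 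This gives the desired $n$-geometric (resp.\ $\infty$-geometric) statement. The only mildly non-trivial step is verifying that the homotopy partial matching construction commutes with $\Pi_{S/R}$, but this reduces to $\Pi_{S/R}$ preserving homotopy limits of prestacks, which is immediate from the pointwise definition.
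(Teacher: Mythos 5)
Your proof is correct and fleshes out exactly what the paper leaves implicit: the paper offers no argument beyond the sentence "The following is now an immediate consequence of Lemma \ref{weillemma2}," and your unwinding---that $\Pi_{S/R}$ commutes with (homotopy) limits and filtered colimits of prestacks because it is given by precomposition, so it carries hypergroupoid presentations to hypergroupoid presentations, with affineness, submersiveness, \'etaleness, $\pi^0$-epimorphism and openness all preserved by Lemma \ref{weillemma2} and flatness of $S$---is precisely the content being invoked.
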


\subsubsection{Homogeneity and tangent complexes}\label{hgssn}

As a prelude to derived representability results, we now adapt some definitions from \cite[\S 1.2]{drep}.

\begin{definition}
Say that a functor between model categories is homotopy-preserving if it maps weak equivalences to weak equivalences. 
\end{definition}
In particular, note that the fibrant derived NC prestacks $F:dg_+\Alg(R) \to s\Set$ are precisely those which are homotopy-preserving and objectwise fibrant.

\begin{definition}\label{sq0def}
We say that  a map $A \to B$ in $dg_+\Alg(R)$ is a square-zero extension if it is surjective and the kernel $I$ is square-zero, i.e. satisfies $I^2=0$. 
\end{definition}

\begin{definition}\label{hhgsdef}
We say that a 
functor 
$$
F:dg_+\Alg(R) \to s\Set
$$
is homogeneous if  for all square-zero extensions $A \to B$ and all maps $C \to B$ in $dg_+\Alg(R)$, the natural map
$$
F(A\by_BC) \to F(A)\by^h_{F(B)}F(C)
$$
to the homotopy fibre product is a weak  equivalence.
\end{definition}

This terminology was inspired by earlier usage in derived deformation theory, such as \cite{Man2}, and is a natural generalisation of Schlessinger's conditions for set-valued deformation functors from \cite[Theorem 2.11]{Sch}. It differs from the notion of infinitesimal cohesion in \cite{lurie} in that we only require one of the morphisms to be nilpotent, so nowadays homogeneity is frequently referred to as ``infinitesimal cohesion on one factor''.

\begin{definition}\label{Tdef}
Given a homotopy-preserving homogeneous  functor $F:dg_+\Alg(R) \to s\Set$, an object $A \in dg_+\Alg(R)$ with $A_{\#}$ flat over $R_{\#}$  and a point $x \in F(A)$, define the tangent functor
$$
T_x(F/R): dg_+\Mod_{A\ten_RA^{\op}} \to s\Set
$$
by
$$
T_x(F/R)(M):= F(A\oplus M\eps)\by^h_{F(A)}\{x\},
$$
where $\eps$ is central and square-zero, so $A \oplus M\eps$ is given the multiplication $(a_1+m_1\eps)(a_2+m_2\eps):=a_1a_2+a_1m_2\eps +m_1a_2\eps$.
\end{definition}
The reason for the hypothesis that  $A_{\#}$ be flat over $R_{\#}$ is to ensure that $A\ten_RA^{\op}$ is a model for the DGAA $A^{\oL,e}$ of Definition \ref{Aedef}.

\begin{definition}\label{Fcotdef}
 In the setting of Definition \ref{Tdef}, we say that $F$ has a cotangent complex $\bL_{F/R,x}$ at $x$ is there is an $A$-bimodule $ \bL_{F/R,x}$ in chain complexes (possibly incorporating negative degrees) representing $T_x(F/R)$ homotopically  in the sense that the simplicial mapping space 
 \[
  \map_{dg_+\Mod_{A\ten_RA^{\op}}}(\bL_{F/R,x},-) 
 \]
is weakly equivalent to $T_x(F/R)$ when restricted to $dg_+\Mod_{A\ten_RA^{\op}}$. 
\end{definition}
In particular, this means that 
\[
\pi_i T_x(F/R)(M)\cong \EExt^{-i}_{A^{\oL,e}}(\bL_{F/R,x},M)
\]
for all $M \in dg_+\Mod_{A\ten_RA^{\op}}$.
 
\begin{remark}
 Note that the hypotheses ensure that $T_x(F/R)$ takes values in simplicial abelian groups and that  it preserves finite homotopy limits, and in particular loop spaces. In Artinian settings, that suffices to ensure a form of cotangent complex exists, but we have to impose its existence  as an additional condition. 

In the commutative setting, there are finiteness conditions ensuring that cotangent complexes exist (see \cite[Proposition 1.33]{drep}, following  \cite[Theorem 7.4.1]{lurie}.  Adapting this to the non-commutative setting would not be straightforward, but in examples if interest it tends to be relatively easy to construct cotangent complexes by hand. 
\end{remark}

\begin{lemma}\label{cotgood}
 If $F$ is an  $\infty$-geometric derived  NC Artin prestack, then it has cotangent complexes at all points.
\end{lemma}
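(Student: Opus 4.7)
The plan is to first reduce to the strongly quasi-compact (sqc) case and then to induct on the geometric level $n$.

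\emph{Reduction to sqc.} By Definition \ref{inftygeomdef2}, $F \simeq \LLim_i U_i$ as a filtered colimit of open morphisms of sqc derived $m_i$-geometric Artin NC prestacks. A point $x \in F(A)$ factors through some $U_{i_0}(A)$, and by the levelwise monomorphism condition of Definition \ref{opendef2} the induced map $T_x(U_{i_0}/R) \to T_x(F/R)$ is a weak equivalence, since $T_x$ is loop-based and hence only sees the higher homotopy groups of the connected component of $x$, on which openness gives isomorphisms. So we may assume $F$ is sqc.

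\emph{Base case $n=0$.} Then $F \simeq \oR\Spec^{nc} B$, and for a point $x\colon B\to A$ the bimodule $A\ten^{\oL}_B \oL\Omega^1_{B/R} \ten^{\oL}_B A$ corepresents $T_x(F/R)$ on $dg_+\Mod(A^{\oL,e})$ via Remarks \ref{cotremarks}: mapping into $M$ from this bimodule computes $R$-linear derivations $B \to A\oplus M\varepsilon$ lifting $x$, which is exactly $T_x(F/R)(M)$.

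\emph{Inductive step.} Suppose the result at level $\le n-1$ and let $F$ arise from a derived homotopy NC Artin $n$-hypergroupoid $X_{\bullet}\to F$, with each $X_k = \oR\Spec^{nc} B_k$ an NC affine and the partial matching maps homotopy submersive $\pi^0$-epimorphisms. In particular, $X_0 \to F$ is homotopy submersive, and by Lemma \ref{hlfplemma} its relative cotangent complex is perfect. Given $x \in F(A)$, the $\pi^0$-surjectivity combined with homogeneity of $F$ (from Remarks \ref{geomrmks2}) yields a compatible system of lifts $\tilde{x}_k \in X_k(A')$ for some submersive cover $A \to A'$; homogeneity of $F$ and each $X_k$ then identifies $T_x(F/R)$ with the homotopy totalisation of the cosimplicial simplicial abelian group $k \mapsto T_{\tilde{x}_k}(X_k/R)$. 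Each factor is corepresented by $A'\ten^{\oL}_{B_k} \oL\Omega^1_{B_k/R} \ten^{\oL}_{B_k} A'$ by the base case, and Dold--Kan normalization of the resulting cosimplicial $(A')^{\oL,e}$-bimodule yields the desired $\bL_{F/R, x}$.

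\emph{Main obstacle.} The principal difficulty, and where descent-lessness bites hardest, is ensuring independence from the choice of lift $\tilde{x}_{\bullet}$ and the cover $A \to A'$, since in the NC setting one cannot sheafify a locally-defined bimodule. Two lifts differ by a $1$-simplex of $X_{\bullet}$ over $x$ whose face maps are submersive and so induce canonical quasi-isomorphisms between the associated cosimplicial bimodules; combined with the weak-equivalence condition on matching maps in degrees $>n$ (Definition \ref{npreldef2}), this shows the totalisation is well-defined up to quasi-isomorphism as an $A^{\oL,e}$-bimodule, and the descent from $A'$ back to $A$ follows from the homogeneity of the tangent functor itself.
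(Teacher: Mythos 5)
Your reduction to the sqc case via openness and your identification of the base case ($F\simeq\oR\Spec^{nc}B$, cotangent complex $A\ten^{\oL}_B\oL\Omega^1_{B/R}\ten^{\oL}_BA$) agree with the paper, as does the basic strategy of computing $T_x(F/R)$ as a cosimplicial totalisation from the hypergroupoid. But there are two problems with the inductive step. First, the induction on $n$ is declared but never actually invoked: by Definition~\ref{npreldef2}, every level $X_k$ of the hypergroupoid is already an NC affine, so the tangent spaces $T_{\tilde x_k}(X_k/R)$ come straight from the base case and no inductive hypothesis is ever used.

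Second, and fatally, you introduce a ``submersive cover $A\to A'$'' in order to lift $x$, and then claim ``the descent from $A'$ back to $A$ follows from the homogeneity of the tangent functor itself.'' This is precisely the move the paper is designed to avoid: in the non-commutative setting there is no descent along submersive covers, and homogeneity (Definition~\ref{hhgsdef}) only governs pullbacks along nilpotent surjections, not along flat/submersive morphisms. In particular nothing lets you recover an $A^{\oL,e}$-module from an $A'^{\oL,e}$-module attached to a cover. The paper sidesteps this entirely: the partial matching conditions make the cosimplicial cotangent bimodule $\Omega^1_{X/S}$ homotopy-Cartesian over $O(X)^{\oL,e}$ (by adapting \cite[Lemma~\ref{stacks2-Lcart} and Corollary~\ref{stacks2-cotgood}]{stacks2}), and this homotopy-Cartesian object is then extended by Yoneda to an $\sO_F^{\oL,e}$-module over the whole slice category $\oR\Spec B\to F$, giving a cotangent complex at every point without any descent argument. (For points $x\in X_0(A)$ the resulting formula is $\bL_{F/R,x}\simeq\Tot N_c^{\le N}(\Omega^1_{X/S}\ten^{\oL}_{O(X)^{\oL,e}}A^{\oL,e})$.) In fact the cover is also unnecessary: since $F$ is the homotopy colimit of $X_\bullet$, any $x\in F(A)$ lifts, up to equivalence, to a point of $X_0(A)$ directly (the realisation's $0$-simplices are $X_0(A)$), so even your cosimplicial computation would have been fine over $A$ itself had you not reached for a cover.
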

\begin{proof}
We begin with the case where $F$ is sqc $N$-geometric, so arises  as the homotopy colimit of a derived homotopy NC Artin $n$-hypergroupoid $X_{\bt}$. Writing $X_n = \oR \Spec^{nc} O(X)^n$,  \cite[Lemma \ref{stacks2-Lcart} and Corollary \ref{stacks2-cotgood}]{stacks2} adapt to give an expression for the cotangent complex as a homotopy-Cartesian  $O(X)$-bimodule in cosimplicial $R$-modules, which by Yoneda extension determines it as an $\sO_F^{\oL,e}$-module, for the DGAA-valued functor $\sO_F$ on the slice category over $F$ given by $\sO_F(\oR\Spec B \to F):= B$. Explicitly, given a point $x \in X_0(A)$, by adapting \cite[Definition \ref{stacks2-cotdef} and Corollary \ref{stacks2-loopcot}]{stacks2}, we have
\[
 \bL_{F/R,x} \simeq \Tot N_c^{\le N} (\Omega^1_{X/S}\ten_{O(X)^{\oL,e}}^{\oL}A^{\oL,e}),
\]
where $N_c^{\le N}$ denotes $N$-truncated cosimplicial normalisation.

For the general case, we write $F=\LLim_i F_i$ as a filtered colimit of open morphisms of sqc $N$-geometric derived NC prestacks, and then observe that the openness conditions ensure that for all $x \in F_i(A)$, the natural map $   \bL_{F/R,x} \to  \bL_{F_i/R,x}$ is an equivalence, so existence follows from the sqc case.
\end{proof}

\begin{lemma}\label{obs}
Let $F:dg_+\Alg(R) \to s\Set$ be homotopy-preserving and homogeneous, and take  
  a surjection  $f \co A \onto B$ in $ dg_+\Alg_R$ with kernel $I$   and a quotient $C$ of $B$ such that $I \cdot \ker(A \to C)=0$ (so in particular $f$ is a square-zero extension). Then there is an associated obstruction map
\[
o_f \co F(B) \to F(C \oplus I_{[-1]}\eps)
\]
in the homotopy category of simplicial sets, such that 
\[
 F(A) \simeq F(B) \by^h_{o_f,F(C \oplus I_{[-1]}\eps),0} F(C).
\]
\end{lemma}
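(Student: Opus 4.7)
The plan is to realize $A$ up to quasi-isomorphism as a homotopy pullback in $dg_+\Alg(R)$ of the form $B\by^h_{C\oplus I_{[-1]}\eps}C$ via a suitable obstruction morphism $\beta$, and then obtain the statement by applying homogeneity of $F$.

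First I would note that the hypothesis $I\cdot \ker(A\to C)=0$ (interpreted two-sidedly, as is implicit once $f$ is required to be a square-zero extension) gives $I$ the structure of a $C$-bimodule, where $J:=\ker(A\to C)$ and $C=A/J$. Consequently $C\oplus I_{[-1]}\eps$ is a well-defined object of $dg_+\Alg(R)$, with zero section $s\co C\to C\oplus I_{[-1]}\eps$.

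Next I would construct the classifying map. Choose a cofibrant replacement $q\co \tilde B\xra{\sim}B$ in $dg_+\Alg(R)$, and, using cofibrancy of $\tilde B$ and surjectivity of $f$, select a DGAA lift $\tilde p\co \tilde B\to A$. Adapting the argument of Lemma \ref{sq0lemma}, the pair $(\tilde p, q)$ produces a DGAA morphism $\tilde\beta\co \tilde B\to C\oplus I_{[-1]}\eps$ whose first coordinate is $\pi\circ q$ and whose second coordinate is a classifying derivation for the extension $A\to B$, valued in $I_{[-1]}\eps$ regarded as a $C$-bimodule. In the homotopy category of DGAAs, $\tilde\beta$ represents a morphism $\beta\co B\to C\oplus I_{[-1]}\eps$, and the obstruction map is defined as $o_f:=F(\tilde\beta)$ under the equivalence $F(B)\simeq F(\tilde B)$ supplied by homotopy-preservation of $F$.

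Finally I would identify $A$ with the homotopy pullback. Factor the zero section $s\co C\to C\oplus I_{[-1]}\eps$ as a trivial cofibration followed by a surjective square-zero extension $\phi\co P\onto C\oplus I_{[-1]}\eps$ with $P\simeq C$, obtained by attaching an acyclic square-zero ideal trivializing $I_{[-1]}\eps$. Form the strict pullback $\tilde A:=\tilde B\by_{C\oplus I_{[-1]}\eps}P$, which models the homotopy pullback $B\by^h_{C\oplus I_{[-1]}\eps}C$. A direct comparison, using that both $\tilde A$ and $A$ are square-zero extensions of $\tilde B$ by $I$ with the same classifying derivation, yields a quasi-isomorphism $\tilde A\xra{\sim}A$ of DGAAs. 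Since $\phi$ is a surjective square-zero extension, homogeneity of $F$ gives
\[
F(A)\simeq F(\tilde A)\simeq F(\tilde B)\by^h_{F(C\oplus I_{[-1]}\eps)}F(P)\simeq F(B)\by^h_{o_f, F(C\oplus I_{[-1]}\eps),0}F(C),
\]
as claimed. The main obstacle will be the construction of $\tilde\beta$ and the verification that $\tilde A\simeq A$: this is essentially a relative, non-commutative refinement of Lemma \ref{sq0lemma}. The key input is that the $B$-bimodule structure on $I$ factors through $C$, so the classifying derivation naturally descends, allowing $\tilde\beta$ to land in $C\oplus I_{[-1]}\eps$ rather than merely $B\oplus I_{[-1]}\eps$.
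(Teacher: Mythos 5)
Your strategy — model $B$ by a suitable $\tilde B$ carrying a classifying morphism to $C\oplus I_{[-1]}\eps$, express $A$ as a homotopy pullback, replace the non-surjective zero section by a square-zero surjection $P\onto C\oplus I_{[-1]}\eps$, and apply homogeneity — follows the same outline as the paper's, and you correctly identify the key point that the classifying derivation factors through $C$ because $I\cdot\ker(A\to C)=0$. Your explicit factorisation of the zero section is also a useful fleshing-out of the paper's brief ``the rest follows from homogeneity'': since $\tilde B\to C\oplus I_{[-1]}\eps$ has kernel $\ker(A\to C)$, which need not be square-zero, homogeneity cannot be applied to that leg directly, so one does need to replace $C$ by such a $P$.

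There is a genuine gap, however, in the construction of $\tilde\beta$. The claim ``using cofibrancy of $\tilde B$ and surjectivity of $f$, select a DGAA lift $\tilde p\co\tilde B\to A$'' is incorrect: $f$ is a fibration but not a trivial fibration, so cofibrancy of $\tilde B$ does not produce a lift of $q$ through $f$ in $dg_+\Alg(R)$; and if such a $\tilde p$ did exist then $\delta\circ\tilde p-\tilde p\circ\delta$ would vanish, $o_f$ would be nullhomotopic, and the lemma would degenerate to $F(A)\simeq F(B)$, which fails whenever $I$ is not acyclic. What you actually want is a lift $\tilde p$ of $q$ merely as a morphism of graded $R_{\#}$-algebras (available because $\tilde B_{\#}$ is a retract of a free graded algebra); the discrepancy $D:=\delta\circ\tilde p-\tilde p\circ\delta$ then lands in $I$, is a degree $-1$ derivation relative to $\pi\circ q$, and supplies the second coordinate of $\tilde\beta$. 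The paper avoids these choices altogether by taking $\tilde B:=\cone(I\to A)$, for which the classifying map is simply $(a,i)\mapsto(\bar a,i)$. Relatedly, in your final step $A$ is a square-zero extension of $B$, not of $\tilde B$; the quasi-isomorphism $\tilde A\to A$ you want is $(\tilde b,i)\mapsto\tilde p(\tilde b)+i$, a chain algebra map over $q\co\tilde B\to B$ inducing the identity on kernels.
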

\begin{proof}
 This is a standard argument (see for instance \cite[Proposition 1.17]{drep}, which takes $B=C$). The idea is that there is an  obvious DGAA structure $\tilde{B}$ on $\cone(I \to A)$,  with $I$ square-zero, so that $\tilde{B}\to B$ is a quasi-isomorphism and hence $F(\tilde{B}) \simeq F(B)$, since $F$ is homotopy-preserving. Then $A= \tilde{B}\by_{C \oplus I_{[-1]}}C$, and the rest follows from homogeneity. 
\end{proof}

\subsubsection{Derived representability}\label{repsn}

We now establish that our derived $n$-geometric  NC prestacks satisfy a weak representability theorem analogous to that of \cite[Appendix C]{hag2}. 
\begin{proposition}\label{derivedatlasprop}
 Take  a derived  NC prestack $F$ over $R$ and a submersive morphism $u \co \Spec A  \to \pi^0F$ from an affine NC $\H_0R$-scheme to the NC prestack $\pi^0F:= F|_{\Alg(\H_0R)}$. Assume  that $F$ is homotopy-preserving (so $F$ fibrant suffices), homogeneous,  and has a cotangent complex at $u \in F(A)$. Assume moreover that $F$ is nilcomplete in the sense that for all $B \in dg_+\Alg(R)$,  applying $F$ to the Postnikov tower of $B$ gives an equivalence
\[
 F(B) \to \ho\Lim_n F(B/\tau_{>n}B).
\]
%
Then there exists a submersive morphism $v \co \oR\Spec \tilde{A} \to F$ with $\pi^0v \cong u$, for some DGAA
$\tilde{A} \in  dg_+\Alg(R)$ with $\H_0\tilde{A}\cong A$. 
 
\end{proposition}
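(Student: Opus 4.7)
The plan is to construct $\tilde A$ as a homotopy limit of a Postnikov-like tower of square-zero extensions in $dg_+\Alg(R)$, designed to kill obstructions to submersiveness one degree at a time. Set $\tilde A^{(0)} := A$, viewed as a DGAA concentrated in chain degree $0$ via $R\to\H_0R\to A$; then $v^{(0)}:\oR\Spec^{nc}\tilde A^{(0)}\to F$ corresponds to the given point $u\in F(A)$. I will construct inductively a tower $\tilde A^{(0)}\leftarrow \tilde A^{(1)}\leftarrow\tilde A^{(2)}\leftarrow\cdots$ with $\H_0\tilde A^{(n)}\cong A$ and compatible lifts $v^{(n)}:\oR\Spec^{nc}\tilde A^{(n)}\to F$, where each transition $\tilde A^{(n+1)}\to\tilde A^{(n)}$ is a square-zero extension with kernel concentrated in chain degree $n+1$, realised via Lemma \ref{sq0lemma}.

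At stage $n$, introduce
\[
\bL^{(n)} := \cone\bigl(v^{(n),*}\bL_{F/R,u}\to \oL\Omega^1_{\tilde A^{(n)}/R}\otimes^{\oL}_{(\tilde A^{(n)})^{\oL,e}}A^{\oL,e}\bigr),
\]
whose homotopy projectivity in the limit is what we want. Classical submersiveness of $u$ yields $\H_0\bL^{(0)}$ projective, and I maintain as induction hypothesis that $\H_i\bL^{(n)}=0$ for $0<i\le n$. Pick a homotopy projective $A$-bimodule $P^{(n+1)}$ with a surjection onto $\H_{n+1}\bL^{(n)}$, lift generators to cycles in a cofibrant model of $\bL^{(n)}$, and convert them via the universal derivation into a map $\eta:\tilde A^{(n)}\to P^{(n+1)}_{[n]}$; the associated square-zero extension from Lemma \ref{sq0lemma} then serves as $\tilde A^{(n+1)}$. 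The crux is extending $v^{(n)}$ to $v^{(n+1)}$: by homogeneity and Lemma \ref{obs}, the obstruction lies in $\pi_0T_{v^{(n)}}(F/R)(P^{(n+1)}_{[n]})\cong \Ext^0_{A^{\oL,e}}(\bL_{F/R,u},P^{(n+1)}_{[n]})$, and is represented by the composite $\bL_{F/R,u}\to \oL\Omega^1_{\tilde A^{(n)}/R}\otimes A^{\oL,e}\xra{\eta_*} P^{(n+1)}_{[n]}$. The long exact $\Ext$ sequence coming from the defining cofibre triangle of $\bL^{(n)}$ shows this composite is null precisely when $\eta_*$ factors through $\bL^{(n)}$; since $\eta_*$ was built from classes in $\H_{n+1}\bL^{(n)}$, such a factorisation exists up to a controllable coboundary, and absorbing this coboundary into a modification of $\eta$ trivialises the obstruction and yields $v^{(n+1)}$. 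By design $\H_{n+1}\bL^{(n+1)}=0$ and lower homology is unchanged, so the induction continues.

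Finally, set $\tilde A := \holim_n\tilde A^{(n)}$ in $dg_+\Alg(R)$, which has $\H_0\tilde A\cong A$ because the kernel at the $n$-th step lies in chain degree $n+1$. Nilcompleteness of $F$ gives $F(\tilde A)\simeq\holim_n F(\tilde A^{(n)})$, so the $v^{(n)}$ assemble into $v:\oR\Spec^{nc}\tilde A\to F$, and $\bL_{v/R}\simeq\holim_n \bL^{(n)}$ has projective $\H_0$ and vanishing higher homology by construction, hence is homotopy projective; formal submersiveness of $v$ then follows from Lemma \ref{smoothetlemma}. The homotopy l.f.p.\ property transfers from $u$ via Lemma \ref{hlfplemma} together with finite generation at each stage. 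The main obstacle is paragraph two's identification of the obstruction class with our cycle-and-derivation data: correctly unwinding the connecting map in the cofibre-sequence long exact sequence and matching it against the homogeneity-derived obstruction of Lemma \ref{obs} is delicate (though standard in derived deformation theory), and a secondary concern is ensuring all choices (projective resolutions, cycle representatives, coboundary corrections) can be made compatibly across stages so that the resulting tower is coherent.
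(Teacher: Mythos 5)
Your overall strategy — construct $\tilde A$ as a homotopy limit of a tower of square-zero extensions in increasing chain degree, invoke homogeneity to lift the point at each stage, and pass to the limit via nilcompleteness — is exactly the one the paper follows. However, the way you carry out the inductive step diverges from the paper's, and the divergence introduces a genuine gap.

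The paper does \emph{not} pick a projective cover. It sets $M := \H_{n+2}\oL\Omega^1_{\tilde A(n)/F}$ (the actual homology module, in general not projective) and observes that the induction hypothesis forces the truncation sequence
\[
(\H_{n+2}\oL\Omega^1_{\tilde A(n)/F})_{[-n-2]}\to \tau_{\le n+2}\oL\Omega^1_{\tilde A(n)/F}\to\tau_{\le n+1}\oL\Omega^1_{\tilde A(n)/F}
\]
to split. The splitting yields a canonical map $\oL\Omega^1_{\tilde A(n)}\to\oL\Omega^1_{\tilde A(n)/F}\to M_{[-n-2]}$, i.e.\ a derivation $\phi$. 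Crucially, $\phi$ factors through the relative cotangent complex, so the composite $u_n^*\oL\Omega^1_F\to\oL\Omega^1_{\tilde A(n)}\to M_{[-n-2]}$ is nullhomotopic; this is precisely why, once $\tilde A(n+1)$ is defined as the homotopy pullback over $A\oplus M_{[-n-2]}$, homogeneity of $F$ immediately produces a lift $u_{n+1}$ --- there is no obstruction to kill.

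Your construction of $\eta$ does not produce a derivation. Having a surjection $P^{(n+1)}\onto\H_{n+1}\bL^{(n)}$ and lifting generators to cycles in a cofibrant model of $\bL^{(n)}$ gives a map of complexes $P^{(n+1)}_{[-n-1]}\to\bL^{(n)}$, i.e.\ a morphism \emph{into} the cone. A derivation on $\tilde A^{(n)}$ is a morphism \emph{out of} $\oL\Omega^1_{\tilde A^{(n)}}$. There is no general way to ``convert'' the former into the latter, and in particular no natural map $\oL\Omega^1_{\tilde A^{(n)}}\to P^{(n+1)}_{[n]}$ emerges from your choices; the two arrows point in opposite directions around the defining triangle of $\bL^{(n)}$. (Relatedly, the shift $P^{(n+1)}_{[n]}$ places $P$ in negative chain degree; the intended shift is $[-n-2]$.) As a consequence, the ``delicate'' obstruction-matching step you flag at the end is not merely delicate — it is the point at which the argument, as written, cannot go through, because the obstruction does not vanish for free once you sever the link between $\eta$ and the relative cotangent complex. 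The paper avoids the issue entirely by taking the module to be $\H_{n+2}\oL\Omega^1_{\tilde A(n)/F}$ itself and the derivation to be the canonical truncation projection, making both the vanishing of the obstruction and the homology computation $\H_i\oL\Omega^1_{\tilde A(n)/\tilde A(n+1)}$ in degrees $n+2,n+3$ transparent. If you replace the projective cover by the homology module and the ``lift cycles'' step by the splitting of the truncation sequence, your argument becomes the paper's.
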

\begin{proof}
This proof proceeds exactly as in the commutative case sketched in \cite[Theorem C0.9]{hag2}. We construct $\tilde{A}$ as the homotopy limit of its Postnikov tower, invoking obstruction theory at each stage. In more detail, we inductively construct  an inverse system $\{\tilde{A}(n)\}_n$ in $dg_+\Alg(R)$ with compatible elements $u_n \in F(\tilde{A}(n))$ such that
\begin{enumerate}
 \item $\tilde{A}(0)\simeq A$ with $u_0\simeq u$,
\item the maps $\tilde{A}(n+1) \to \tilde{A}(n)$ induce quasi-isomorphisms $\tilde{A}(n+1)/\tau_{>n}\tilde{A}(n+1) \to \tilde{A}(n)$ (in particular, $\H_i\tilde{A}(n)=0$ for all $i>n$), and
 \item $F$ has a cotangent complex $u_n^*\oL\Omega^1_{F}$ at $u_n$, with the $\tilde{A}(n)^{\oL,e}$-module $\oL\Omega^1_{\tilde{A}(n)/F}:= \cone(\oL\Omega^1_{\tilde{A}(n)} \to  u_n^*\oL\Omega^1_{F})$ being projective to order $n+1$ in the sense that 
\[
 \Ext^i_{\tilde{A}(n)^{\oL,e}}(\oL\Omega^1_{\tilde{A}(n)/F},M)=0
\]
 for all $1\le i \le n+1$ and all $A^{\oL,e}$-modules $M$ concentrated in degree $0$.
\end{enumerate}
Note that via a spectral sequence argument, the third condition is equivalent  to saying that $\Ext^0_{\tilde{A}(n)^{\oL,e}}(\oL\Omega^1_{\tilde{A}(n)/F},N)=0$ for all $\tilde{A}(n)^{\oL,e}$-modules $N$ concentrated in chain degrees $[1,n+1]$. 

The construction and arguments fleshed out in \cite[proof of Theorem 7.1]{PortaYuRep} now adapt to this setting, although (like \cite{hag2}) we have used projectivity rather than flatness, thus avoiding  descent arguments which are not available in our setting. We thus merely outline the main steps.
 
For $n=0$, the conditions are satisfied by setting $\tilde{A}(0):=A$ and $u_0=u$. Given $(\tilde{A}(n),u_n)$, the truncation $\tau_{\le n+1}\oL\Omega^1_{\tilde{A}(n)/F}$ is necessarily projective to order $n+1$, from which it follows that the fibration sequence
\[
(\H_{n+2}\oL\Omega^1_{\tilde{A}(n)/F})_{[-n-2]} \tau_{\le n+2}\oL\Omega^1_{\tilde{A}(n)/F} \to \tau_{\le n+1}\oL\Omega^1_{\tilde{A}(n)/F}
\]
splits. The resulting map $\tau_{\le n+2}\oL\Omega^1_{\tilde{A}(n)/F} \to (\H_{n+2}\oL\Omega^1_{\tilde{A}(n)/F})_{[-n-2]}=:M_{[-n-2]}$ in the homotopy category of $\tilde{A}(n)^{\oL,e}$-modules gives rise to a derivation $\phi$ from $\tilde{A}(n)$, and we set $\tilde{A}(n+1)$ to be the homotopy fibre product
\[
 \tilde{A}(n+1) := \tilde{A}(n)\by^h_{\phi, (A \oplus M_{[-n-2]})}A,
\]
which is a homotopy extension of $\tilde{A}(n)$ by $M_{[-n-1]}$. 

Homogeneity of $F$ then gives rise to a lift $u_{n+1} \in F(\tilde{A}(n+1))$ of $u_n$. Relatively straightforward calculations show that 
\[
 \H_i \oL\Omega^1_{\tilde{A}(n)/\tilde{A}(n+1)} \cong \begin{cases} 0 & i< n+2 \\ \H_{n+2}\oL\Omega^1_{\tilde{A}(n)/F} & i=n+2 \\ 0 & i=n+3, \end{cases}
\]
and combining these with the fibration sequence 
\[
\oL\Omega^1_{\tilde{A}(n+1)/F}\ten_{\tilde{A}(n+1)^{\oL,e}}^{\oL}\tilde{A}(n)^{\oL,e} \to  \oL\Omega^1_{\tilde{A}(n)/F} \to \oL\Omega^1_{\tilde{A}(n)/\tilde{A}(n+1)}  
\]
allows us to conclude that $\oL\Omega^1_{\tilde{A}(n+1)/F}$ is indeed projective to order $n+2$. 

This completes the inductive step, and we now set $\tilde{A}:=\ho\Lim_n \tilde{A}(n)$, with the  element $v \in F(\tilde{A})$ given by the elements $u_n$ via the equivalence $F(\tilde{A})\simeq \ho\Lim_n  F(\tilde{A}(n))$ coming from nilcompleteness.
\end{proof}

 \begin{corollary}\label{repcor}
A homotopy-preserving derived  NC prestack $F$ is $n$-geometric (resp. $\infty$-geometric) if and only if $\pi^0F$ is $n$-geometric (resp. $\infty$-geometric) and
$F$ is nilcomplete, homogeneous, and has a cotangent complex globally.
 \end{corollary}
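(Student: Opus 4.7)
The plan is to derive the corollary from Proposition \ref{derivedatlasprop}, Lemma \ref{cotgood}, and the structural observations already assembled in Remarks \ref{geomrmks2}. The forward direction collects facts stated in the lead-up: homogeneity and nilcompleteness for sqc $n$-geometric $F$ are observed in Remarks \ref{geomrmks2} (the former from the homotopy Kan-fibration behaviour on nilpotent surjections, the latter from bounded simplicial complexity combined with nilcompleteness of each affine piece), and both properties pass through the filtered colimits of open morphisms used to reach the general $n$-geometric and $\infty$-geometric notions. The cotangent complex exists by Lemma \ref{cotgood}. That $\pi^0 F$ is geometric of the same type reduces to observing that applying $\pi^0$ to a presenting derived homotopy NC Artin $n$-hypergroupoid $X_{\bt}$ yields an ordinary NC Artin $n$-hypergroupoid presenting $\pi^0 F$: the homotopy submersive $\pi^0$-epimorphic partial matching maps restrict to submersive epimorphisms on $\pi^0$, while the higher weak-equivalence conditions become isomorphism conditions.

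The converse goes by induction on $n$, with the substance concentrated in the base case. For $n = 0$, $\pi^0 F$ is affine (or a filtered colimit of open affines in the $\infty$-geometric case), and applying Proposition \ref{derivedatlasprop} to $u = \id$ yields a submersive morphism $v \co \oR\Spec^{nc} \tilde A \to F$ inducing an isomorphism on $\pi^0$. Submersiveness of $v$ combined with the $\pi^0$-isomorphism forces the relative cotangent complex of $v$ to be acyclic; a Postnikov-tower induction on $B \in dg_+\Alg(R)$ parallel to the argument in the proof of Lemma \ref{QIMlemma}, invoking Lemma \ref{obs} together with the identification of obstruction-theoretic homotopy fibres via the common cotangent complex at $v$, then shows $v(B/\tau_{>n}B)$ is a weak equivalence for all $n$. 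Nilcompleteness on both sides closes the induction by passing through the homotopy limit to give that $v(B)$ is a weak equivalence on all of $dg_+\Alg(R)$.

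For the inductive step with $n > 0$, start from a submersive atlas $U \to \pi^0 F$ (an affine in the $n$-geometric case, a filtered colimit of open affines in the $\infty$-geometric case) and lift it via Proposition \ref{derivedatlasprop} to a submersive morphism $\tilde U \to F$. The homotopy fibre product $\tilde U \by^h_F \tilde U$ inherits nilcompleteness, homogeneity, and a cotangent complex from $F$ and affines, while $\pi^0(\tilde U \by^h_F \tilde U) \simeq U \by^h_{\pi^0 F} U$ is $(n-1)$-geometric by hypothesis on $\pi^0 F$. By the inductive hypothesis, $\tilde U \by^h_F \tilde U$ is therefore derived $(n-1)$-geometric, and iterating the construction via the relative strictification techniques of \cite[Theorem \ref{stacks2-relstrict}]{stacks2} assembles the desired derived NC Artin $n$-hypergroupoid presenting $F$. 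The main anticipated obstacle is verifying that the resulting simplicial diagram genuinely satisfies the hypergroupoid conditions — in particular that each partial matching map inherits submersiveness and the $\pi^0$-epimorphism property — but this should reduce to relative versions of Proposition \ref{derivedatlasprop} applied over each stage. The $\infty$-geometric case follows formally by unwinding the presenting filtered colimit of open morphisms.
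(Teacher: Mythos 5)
Your proposal is correct and takes essentially the same approach as the paper: the paper's proof is a two-sentence sketch stating that the forward direction is straightforward, while the converse reduces (via the hypergroupoid characterisation from \cite[Theorem \ref{stacks2-bigthm}]{stacks2}) to constructing an $n$-atlas inductively, with Proposition \ref{derivedatlasprop} lifting atlases of $\pi^0F$ to atlases of $F$. Your fleshed-out version — citing Remarks \ref{geomrmks2} and Lemma \ref{cotgood} for the forward direction, handling $n=0$ via acyclicity of the relative cotangent complex and Postnikov-tower induction, and then inducting on $\tilde U \by^h_F \tilde U$ — is exactly the argument the paper's sketch is alluding to.
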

\begin{proof}
Again, this follows exactly as in the commutative case. The ``only if'' direction is fairly straightforward. 
For the ``if'' direction, we can use the characterisation of \cite[Theorem \ref{stacks2-bigthm}]{stacks2} to see that it suffices to construct an $n$-atlas for $F$ analogous to those of \cite{hag2}. This can be done inductively on $n$, with Proposition \ref{derivedatlasprop} producing $n$-atlases for $F$ from those for $\pi^0F$. 
\end{proof}

\subsubsection{Derived moduli of projective modules}\label{dmodprojmodsn}

\begin{definition}
 Given an associative dg algebra $A$, say that a right $A$-module $M$ in chain complexes is projective if the derived tensor product  $M\ten^{\oL}_A\H_0A$ is quasi-isomorphic to a projective $\H_0A$-module concentrated in degree $0$. 
\end{definition}

\begin{proposition}\label{dprojgeomprop}
The simplicial functor $\bar{W}\cP$ on $dg_+\Alg(R)$ sending an associative dg algebra $A$ to the nerve of the $\infty$-groupoid $\cP(A)$ of finite projective right $A$-modules is a submersive $1$-geometric derived Artin NC prestack.
\end{proposition}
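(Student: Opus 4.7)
The plan is to verify the hypotheses of Corollary \ref{repcor}, which reduces derived $1$-geometricity to classical $1$-geometricity of $\pi^0$ together with homogeneity, nilcompleteness, and existence of cotangent complexes. The $1$-geometricity of $\pi^0\bar{W}\cP$, which is the restriction of $B\cP$ to $\Alg(\H_0R)$, is Proposition \ref{projgeomprop}. The functor $\bar{W}\cP$ is homotopy-preserving because a quasi-isomorphism $A \to A'$ induces a quasi-equivalence on the sub-dg categories of $\per_{dg}$ consisting of projective modules, since $M \mapsto M \ten_A^{\oL} A'$ is an equivalence on the $\H_0$-level by the classical lifting of idempotents, and then preserves and detects quasi-isomorphisms between projectives.

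For homogeneity, given a square-zero extension $A \to B$ with kernel $I$ and any morphism $C \to B$ in $dg_+\Alg(R)$, I would show that $\cP(A\by_B C) \to \cP(A) \by^h_{\cP(B)} \cP(C)$ is a weak equivalence. The essential content is that a projective $(A\by_B C)$-module is the same datum as a pair $(P_A, P_C)$ of projective modules with an equivalence of their images in $\cP(B)$: this follows because projective modules lift uniquely along square-zero extensions, as the relevant obstruction and Ext${}^{\geq 1}$ groups vanish. The argument parallels the lifting steps in Proposition \ref{cPiArtinprop} combined with the idempotent-lifting in Lemma \ref{idsmoothlemma}. Nilcompleteness reduces, via the Milnor $\lim^1$ exact sequence for  $B \simeq \ho\Lim_n B/\tau_{>n}B$, to the same vanishing statement applied to the successive square-zero extensions in the Postnikov tower.

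For the cotangent complex at $P \in \cP(A)$ with $A_{\#}$ flat over $R_{\#}$: after refining $P \simeq eA^n$ for an idempotent $e \in \Mat_n(A)_0$, the tangent space $T_P\bar{W}\cP(M)$ measures deformations of $P$ over $A \oplus M\eps$. Standard obstruction theory (as in the proof of Proposition \ref{cPiArtinprop}) identifies
\[
 \pi_i T_P\bar{W}\cP(M) \cong \EExt^{1-i}_{A}(P, P\ten_A^{\oL} M) \cong \EExt^{1-i}_{A^{\oL,e}}(P\ten_A^{\oL} P^{\vee}, M),
\]
so a model for the cotangent complex is $\bL_{\bar{W}\cP,P} \simeq (P\ten_A^{\oL} P^{\vee})_{[1]}$ as an $A^{\oL,e}$-module, where $P^\vee = \HHom_A(P,A)$; the shift by $1$ reflects the stacky/groupoid nature of $\bar{W}\cP$. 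Perfectness of $P\ten_A^{\oL} P^{\vee}$ as a bimodule is inherited from $P$ being a direct summand of $A^n$.

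Combining these via Corollary \ref{repcor} yields $1$-geometricity. For the submersive statement, I would then apply Proposition \ref{derivedatlasprop} to the submersive atlas $\coprod_n \Ob_{\cI}(\Mat_n(-)) \to B\cP$ of Proposition \ref{projgeomprop}: each component lifts to a submersive morphism $\oR\Spec \tilde{R}_n \to \bar{W}\cP$ from a derived NC affine, and assembling these into the nerve of the (derived) groupoid of lifted idempotents produces the required derived submersive Artin $1$-hypergroupoid presentation, arguing as in Proposition \ref{projgeomprop} for the transition maps being open. The main obstacle I anticipate is keeping track of the bimodule structure on the cotangent complex and verifying cleanly that the $A^{\oL,e}$-module $P\ten_A^{\oL} P^{\vee}$ has the right finiteness property to trigger Proposition \ref{derivedatlasprop}; the homogeneity verification is also somewhat delicate since projectivity is not local in the non-commutative setting, so the standard descent arguments must be replaced by direct idempotent-lifting computations along the lines of Lemma \ref{idsmoothlemma}.
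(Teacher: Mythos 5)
Your proposal follows the same route as the paper: verify the hypotheses of Corollary \ref{repcor}, using Proposition \ref{projgeomprop} for $\pi^0$, and then supply homogeneity, nilcompleteness, and a cotangent complex. The additional detail you give on homogeneity and nilcompleteness (which the paper delegates to a reference) is sound, and your tangent-space computation $\pi_iT_P\bar{W}\cP(M)\cong\EExt^{1-i}_{A}(P,P\ten_A^{\oL}M)$ is correct.

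However, your formula for the cotangent complex, $(P\ten_A^{\oL} P^{\vee})_{[1]}$ regarded as an $A^{\oL,e}$-module, does not typecheck. Since $P$ is a right $A$-module and $P^{\vee}$ is a left $A$-module, the tensor $P\ten_A^{\oL}P^{\vee}$ consumes both $A$-actions and produces only an $R$-module, not an $A$-bimodule, so it cannot serve as a representing object in $dg_+\Mod_{A\ten_RA^{\op}}$. The correct formula, which the paper gives, is $P^{\vee}\ten_R^{\oL}P_{[1]}$: the tensor must be taken over the base ring $R$, leaving the left $A$-action on $P^{\vee}$ and the right $A$-action on $P$ free to furnish the $A^{\oL,e}$-module structure, and the order matters for the same reason (a right module $P$ has no left $A$-action to contribute when it appears on the left). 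The rewriting that gets you there is the $A$-linear duality $\oR\HHom_{A^{\op}}(P,P\ten_A^{\oL}M)\cong\oR\HHom_{A^{\oL,e}}(P^{\vee}\ten_R^{\oL}P,M)$ for $P$ finite projective; substituting the corrected formula into your perfectness remark, $P^{\vee}\ten_R^{\oL}P$ is a retract of $A^n\ten_RA^n$ hence perfect over $A^{\oL,e}$, repairs the rest of the paragraph. Your treatment of the submersive atlas via Proposition \ref{derivedatlasprop} is reasonable and consistent with what the paper leaves implicit.
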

\begin{proof}
 By Proposition \ref{projgeomprop}, we know that $\pi^0F$ is a  submersive $1$-geometric Artin NC prestack. We may then invoke Corollary \ref{repcor}, with reasoning as in \cite[\S 2]{dmsch} showing that $\bar{W}\cP$ is nilcomplete and  homogeneous. That reasoning also  shows that the tangent functor at a point $[P] \in \bar{W}\cP(A)$ corresponding to a projective module $P$ is given by truncating the functor $M \mapsto \oR\HHom_{A^{\op}}(P, P\ten_AM_{[-1]})$. Now
 \[
 \oR\HHom_{A^{\op}}(P, P\ten_AM_{[-1]}) \cong \oR\HHom_{A^{\oL,e}}(P^*\ten^{\oL}_RP,M_{[-1]}),
 \]
 where $P^*:=\oR\HHom_{A^{\op}}(P,A)$, regarded as a left $A$-module, so $\bar{W}\cP$ has a  cotangent complex  $P^*\ten^{\oL}_RP_{[1]}$ at every point $[P]$, and all the conditions of Corollary \ref{repcor} are satisfied. 
\end{proof}

\subsubsection{Derived moduli of perfect complexes}\label{dmodperfsn}

Since the dg category $\per_{dg}(B)$ of perfect complexes is defined for  DGAAs $B$, Definition \ref{Perfdef} extends to give us a derived NC prestack $\Perf\co dg_+\Alg(R) \to s\Set$ of perfect complexes, and we then have the following: 

\begin{proposition}\label{dPerfgeomcor}
The derived NC prestack $\Perf$ is  Artin $\infty$-geometric. 
\end{proposition}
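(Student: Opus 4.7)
The plan is to apply Corollary \ref{repcor} and reduce the problem to verifying that $\pi^0\Perf$ is $\infty$-geometric, and that $\Perf$ is nilcomplete, homogeneous, and admits a global cotangent complex. The first condition is immediate from Corollary \ref{Perfgeomcor}, since the classical truncation $\pi^0\Perf$ is precisely the NC prestack of perfect complexes considered there.

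For nilcompleteness and homogeneity, I would adapt the reasoning used for $\bar{W}\cP$ in the proof of Proposition \ref{dprojgeomprop}, appealing to the standard dg-categorical facts (as in \cite[\S 2]{dmsch}) that perfect complexes over DGAAs are stable under the relevant homotopy limits. Concretely, homogeneity amounts to showing that for a square-zero extension $A \to B$ and a morphism $C \to B$ in $dg_+\Alg(R)$, the natural dg functor from $\per_{dg}(A\by_BC)$ to the homotopy fibre product $\per_{dg}(A)\by_{\per_{dg}(B)}^h\per_{dg}(C)$ is a quasi-equivalence; this is standard base change for perfect complexes and also follows, after taking cores and nerves, from the analysis in Corollary \ref{perdgArtin}. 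Nilcompleteness reduces to showing that $\per_{dg}(\ho\Lim_n B/\tau_{>n}B)$ is recovered as the homotopy limit of $\per_{dg}(B/\tau_{>n}B)$, again a standard fact.

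For the global cotangent complex, at a point $[P]\in \Perf(A)$ represented by a cofibrant perfect complex $P$ of right $A$-modules, the same dg-categorical computation as in the proof of Proposition \ref{dprojgeomprop} identifies the tangent functor with
\[
 T_{[P]}(\Perf/R)(M) \simeq \tau_{\ge 0}\,\oR\HHom_{A^{\op}}(P, P\ten^{\oL}_AM)_{[1]}.
\]
Since $P$ is perfect, the dual $P^*:=\oR\HHom_{A^{\op}}(P,A)$ is a perfect left $A$-module and we have
$\oR\HHom_{A^{\op}}(P, P\ten^{\oL}_AM)\simeq \oR\HHom_{A^{\oL,e}}(P^*\ten^{\oL}_RP, M)$, giving the cotangent complex $\bL_{\Perf, [P]}\simeq (P^*\ten^{\oL}_RP)_{[1]}$ as an $A^{\oL,e}$-module. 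All four hypotheses of Corollary \ref{repcor} are then met, yielding $\infty$-geometricity.

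The main obstacle is not any single step but the verification of homogeneity: one must check that pullbacks along square-zero extensions in $dg_+\Alg(R)$ are preserved when passing from DGAAs to their dg categories of perfect modules and then to their cores. In the non-commutative setting, descent is unavailable (as emphasised in the introduction), so this has to be handled directly through obstruction theory on the perfect modules, using the explicit models $\cP_{\uline{i}}$ of Definition \ref{cPidef} and Proposition \ref{cPiArtinprop} to reduce to assertions about idempotents and differentials, where the relevant obstruction classes live in $\Ext^i_{B^{\oL,e}}(P^*\ten^{\oL}_RP, I)$ for the square-zero kernel $I$.
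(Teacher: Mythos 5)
Your proposal is correct and follows essentially the same route as the paper's own proof: invoke Corollary \ref{repcor}, use Corollary \ref{Perfgeomcor} for $\pi^0\Perf$, and transfer the verification of nilcompleteness, homogeneity and the cotangent complex from the reasoning for $\bar{W}\cP$ in Proposition \ref{dprojgeomprop}, arriving at the same cotangent complex $(P^*\ten^{\oL}_RP)_{[1]}$ with $P^*=\oR\HHom_{A^{\op}}(P,A)$. (Only a small caveat: in your displayed tangent functor the suspension should be a cochain shift $\oR\HHom_{A^{\op}}(P,P\ten^{\oL}_A M)^{[1]}$, equivalently $\oR\HHom_{A^{\op}}(P,P\ten^{\oL}_A M_{[-1]})$ as in the paper, rather than the chain shift $_{[1]}$; your final formula for the cotangent complex is nonetheless correct.)
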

\begin{proof}
 This follows from Corollaries \ref{repcor} and \ref{Perfgeomcor} with identical reasoning to the proof of Proposition \ref{dprojgeomprop}. The cotangent complex at  $[P] \in \Perf(B)$  is again given by the $B^{\oL,e}$-module $\oR\HHom_{B^{\op}}(P,B)\ten^{\oL}_RP_{[1]}$.
\end{proof}

\begin{corollary}\label{dPerfgeomcor2}
 If $S \in \Alg(R)$ is finite and flat as an $R$-module, then the  simplicial functor on $dg_+\Alg(R)$ given by $\Perf(-\ten_RS)$ is Artin $\infty$-geometric.
\end{corollary}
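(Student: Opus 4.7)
The plan is to observe that this is essentially an immediate consequence of two results already established: Proposition \ref{dPerfgeomcor} and Lemma \ref{stweillemma2}. By the definition of the restriction of scalars endofunctor $\Pi_{S/R}$ on $DG^+\Aff^{nc}(R)^{\wedge}$ given in \S \ref{weilrestrn}, the functor $B \mapsto \Perf(B \ten_R S)$ is precisely $\Pi_{S/R}\Perf$.

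First I would invoke Proposition \ref{dPerfgeomcor}, which tells us that $\Perf$ is an $\infty$-geometric derived Artin NC prestack. Then I would apply Lemma \ref{stweillemma2}, whose hypotheses (namely that $S \in \Alg(R)$ is finite and flat as an $R$-module) are exactly the hypotheses of the corollary. Lemma \ref{stweillemma2} asserts that under these conditions $\Pi_{S/R}$ preserves the subcategory of $\infty$-geometric derived Artin NC prestacks. Combining these two facts directly yields that $\Pi_{S/R}\Perf = \Perf(-\ten_R S)$ is Artin $\infty$-geometric, as required.

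There is no real obstacle here; the substantive work was all done in Proposition \ref{dPerfgeomcor} (via Corollary \ref{repcor} and the reasoning in Proposition \ref{dprojgeomprop}) and in Lemma \ref{stweillemma2} (which ultimately rests on Lemma \ref{weillemma2} and exactness of flat tensor products). The only thing worth emphasising in writing it out is the identification of $\Perf(-\ten_R S)$ with $\Pi_{S/R}\Perf$, so that the reader sees immediately why Lemma \ref{stweillemma2} applies. One could optionally remark, for completeness, that flatness of $S$ over $R$ ensures that $-\ten_R S$ preserves quasi-isomorphisms in $dg_+\Alg(R)$, so that $\Pi_{S/R}\Perf$ is homotopy-preserving, but this is already built into the statement of Lemma \ref{stweillemma2}.
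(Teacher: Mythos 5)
Your argument is exactly the paper's proof: the paper likewise deduces the corollary by combining Proposition \ref{dPerfgeomcor} with Lemma \ref{stweillemma2}, after identifying $\Perf(-\ten_R S)$ with $\Pi_{S/R}\Perf$. Your additional remarks about where the substantive work lies and about flatness preserving quasi-isomorphisms are accurate but not strictly needed.
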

\begin{proof}
This follows by combining Proposition \ref{dPerfgeomcor} with Lemma \ref{stweillemma2}. 
\end{proof}

\begin{remark} \label{Perfnicermk}
 It is natural to ask whether we can extend Corollary \ref{dPerfgeomcor2} by taking more general DGAAs or even dg categories  in place of $S$. Constructing atlases is very hard work, but  the reasoning of \cite[\S 2]{dmsch} still ensures that for any dg category $\cA$ over $R$, the derived NC prestack $\Perf_{\cA}:= \Perf(\cA\ten_R^{\oL}-)$  satisfies most of the conditions of  Proposition \ref{derivedatlasprop}, being homogeneous and nilcomplete. 
 
 The tangent functor of $\Perf_{\cA}$ at the point $[P]\in \Perf(\cA\ten_R^{\oL}B)$  corresponding to a perfect $\cA\ten_R^{\oL}B$-module $P$ is given on $B^{\oL,e}$-modules by 
 \[
 M \mapsto \oR\HHom_{\cA \ten_R^{\oL} B}(P, P\ten_B^{\oL}M)_{[-1]} \simeq \oR\HHom_{\cA \ten_R^{\oL} B}(P, P \ten_R^{\oL}B)_{[-1]}\ten_{B^{\oL,e}}^{\oL}M.
 \]
 Thus the cotangent complex at $[P]$ is given by the dual of the $B^{\oL,e}$-module $\oR\HHom_{\cA \ten_R^{\oL} B}(P, P \ten_R^{\oL}B)_{[-1]}$ whenever the latter is perfect, where we regard $P \ten_R^{\oL}B$ as a $B$-module with respect to the right action of $B$ on itself, and as a $B^{\oL,e}$-module by combining the left action on $B$ with the right action on $P$. 
 
 We can then deduce that for perfect cotangent complexes to exist at all points $[P]$ of $\Perf_{\cA}$, it suffices for $\cA$ to be a locally proper dg category over $R$, meaning that the morphism complexes $\cA(X,Y)$ should all be perfect complexes of $R$-modules. This follows because 
 for any objects $X,Y \in \cA$, we know that 
 \begin{align*}
  \oR\HHom_{\cA \ten_R^{\oL} B}((X\ten^{\oL}_RB), (Y\ten^{\oL}_RB) \ten_R^{\oL}B)&\simeq \oR\HHom_{\cA}(X,Y)\ten^{\oL}_RB^{\oL,e},
 \end{align*}
which is perfect over $B^{\oL,e}$, and that $P$ is a retract of a finite extension of modules of the form $X\ten^{\oL}_RB$.

The absence of descent in this NC setting means it is not clear whether hypergroupoid presentations do in fact exist for $\Perf_{\cA}$. However, Lurie's representability theorem in its simplified form as \cite[Theorem 2.17
]{drep} (also see \cite[Theorem 4.12
]{dmsch}) can easily be applied to give presentations for the commutative restriction $\Perf_{\cA}^{\comm}$ once one permits \'etale descent. Explicit hypergroupoid presentations for closely related moduli problems appear in \cite{benzeghliGeometricityMC,benzeghliGP}. 
 \end{remark}

\section{Stacky thickenings of non-commutative derived affines and formal stacks}\label{stackysn}

The shifted double Poisson structures in \cite{NCpoisson} will only satisfy \'etale functoriality, meaning that of the objects we have encountered so far, they cannot be defined on anything more general than derived NC Deligne--Mumford prestacks. Instead of using DGAAs as building blocks, the solution is to introduce a non-commutative analogue of the stacky CDGAs of \cite[\S \ref{poisson-stackyCDGAsn}]{poisson}, which we call stacky DGAAs. These will behave like formal completions of Artin prestacks, and should be thought of as non-commutative analogues of Lie algebroids (unfortunately, associative algebroid already has quite a different meaning). We will then see that derived  NC  Artin prestacks admit \'etale hypercovers by such stacky DGAAs.

The lack of descent in our non-commutative setting also  means that the non-commutative $n$-geometric prestacks of the previous section might not be  as prevalent as we might wish. We would for instance like to study moduli of perfect complexes over proper dg categories as in Remark \ref{Perfnicermk}. However, we will see that for such derived NC prestacks admitting cotangent complexes everywhere, there do exist resolutions by  \'etale stacky DGAAs, which will then permit  well-behaved Poisson structures to be defined and constructed on very general NC moduli functors  in \cite{NCpoisson}.

\subsection{Stacky thickenings of derived affines}

We now adapt some definitions and lemmas from \cite[\S \ref{poisson-Artinsn}]{poisson}. 
Recall that
we   regard the  DGAAs of \S \ref{NCmodulisn}   as chain complexes $\ldots \xra{\delta} A_1 \xra{\delta} A_0 \xra{\delta} \ldots$ rather than cochain complexes --- this will enable us to distinguish easily between derived (chain) and stacky (cochain) structures.  

\begin{definition}
A stacky DGAA over a CDGA $R_{\bt}$ is  an associative  $R$-algebra $A^{\bt}_{\bt}$ in  cochain chain complexes. We write $DGdg\Alg(R)$ for the category of  stacky DGAAs over $R$, and $DG^+dg\Alg(R)$ (resp. $DG^+dg_+\Alg(R)$) for the full subcategory consisting of objects $A$ concentrated in non-negative cochain degrees (resp. non-negative bidegrees).
\end{definition}

When working with chain cochain complexes $V^{\bt}_{\bt}$, we will usually denote the chain differential by $\delta \co V^i_j \to V^i_{j-1}$, and the cochain differential by $\pd \co V^i_j \to V^{i+1}_j$.

\begin{definition}
 Say that a morphism $U \to V$ of chain cochain complexes is a levelwise quasi-isomorphism if $U^i \to V^i$ is a quasi-isomorphism of chain complexes for all $i \in \Z$. Say that a morphism of stacky DGAAs is a levelwise quasi-isomorphism if the underlying morphism of chain cochain complexes is so.
\end{definition}

The following is a consequence of \cite[Theorem 11.3.2]{Hirschhorn}, with essentially the same proof as \cite[Lemma \ref{poisson-bicdgamodel}]{poisson}:
\begin{lemma}\label{bidgaamodel}
There is a cofibrantly generated model structure on stacky DGAAs over $R$ in which fibrations are surjections and weak equivalences are levelwise quasi-isomorphisms. 
\end{lemma}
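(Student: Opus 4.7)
The plan is to follow the same transfer-of-model-structure strategy that the author used earlier for DGAAs (cf.\ the discussion after Definition \ref{dgalgdef}) and for bi-CDGAs in the Poisson paper. The base of the transfer will be the category of $R$-module objects in cochain chain complexes, equipped with the model structure whose fibrations are surjections and whose weak equivalences are levelwise quasi-isomorphisms. This ``cochain-wise projective'' structure is a standard instance of the projective model structure on diagrams in chain complexes (indexed by $\Z_{\ge 0}$ or $\Z$ viewed as a discrete poset in the cochain direction), so it is cofibrantly generated with generating (trivial) cofibrations given by tensoring the generating (trivial) cofibrations of chain complexes with the representable cochain objects.

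Next I would apply \cite[Theorem 11.3.2]{Hirschhorn} to the free-forgetful adjunction
\[
T \co \mathrm{ChCoch}(R) \rightleftarrows DGdg\Alg(R) \co U,
\]
where $T(V):= R\ten_{\Z}T_{\Z}(V)$ is the free associative $R$-algebra on a chain cochain complex $V$. One defines fibrations and weak equivalences of stacky DGAAs to be those morphisms which are such after applying $U$; this gives the desired class of surjections and levelwise quasi-isomorphisms. Smallness of the domains of the generating (trivial) cofibrations transfers along $T$ because $U$ preserves filtered colimits and the generating objects are finite cell complexes in chain cochain complexes.

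The only substantive verification demanded by Hirschhorn's theorem is the acyclicity condition: every transfinite composite of pushouts of maps of the form $T(j)$, for $j$ a generating trivial cofibration in $\mathrm{ChCoch}(R)$, should be a levelwise quasi-isomorphism. This is the step I expect to be the main obstacle, but it is also where the proof is parallel to the argument for DGAAs in \S \ref{setupsn} and for bi-CDGAs in \cite[Lemma \ref{poisson-bicdgamodel}]{poisson}. The key input is that the tensor algebra functor $V \mapsto T_{\Z}(V)$, unlike the symmetric algebra, involves no symmetrisation; consequently $T_{\Z}$ sends levelwise quasi-isomorphisms of bounded-below (in the chain direction) cochain chain complexes of flat $\Z$-modules to levelwise quasi-isomorphisms, with no hypothesis on the characteristic of $R$. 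A standard filtration-by-word-length argument on $T_{\Z}(V\oplus W)\cong \bigoplus_n (V\oplus W)^{\ten n}$ then reduces the acyclicity condition for a pushout $A\to A\amalg_{T(V)}T(W)$ along $T(j\co V\into W)$ to showing that each tensor power of $W/V$ (or of $W$ with a fixed $V$-slot pattern), tensored with finitely many copies of $A$, has the same levelwise homology as before, which follows from the hypothesis that $j$ is a levelwise quasi-isomorphism of complexes whose cokernel is levelwise flat.

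Finally I would record the explicit generating sets, namely $T(I)$ and $T(J)$ where $I$, $J$ are the generating (trivial) cofibrations of $\mathrm{ChCoch}(R)$, so that the resulting model structure is manifestly cofibrantly generated. This gives the statement of Lemma \ref{bidgaamodel}; the remainder of the section can then invoke the model structure to form homotopy fibre products of stacky DGAAs, cofibrant replacements, and so on.
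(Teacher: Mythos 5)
Your proposal is correct and follows essentially the same route as the paper: transfer the levelwise-projective model structure on cochain chain complexes along the free-forgetful adjunction via \cite[Theorem 11.3.2]{Hirschhorn}, with the acyclicity verification handled as in the stacky-CDGA case but made characteristic-free by the absence of symmetrisation in the tensor algebra. The paper itself only cites Hirschhorn and the analogous lemma in the Poisson companion; your write-up supplies the same argument with the details filled in.
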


There is a Dold--Kan denormalisation functor $D$ from non-negatively graded DGAAs to cosimplicial associative algebras; the explicit formulae of  \cite[Definition \ref{ddt1-nabla}]{ddt1} are still valid in the non-commutative setting. This necessarily has a left adjoint, which we denote by $D^*$; for an explicit description, note that the formula of \cite[Definition \ref{DQDG-Dstardef}]{DQDG} is still valid in this more general case. For most practical purposes, the functor $D^*$ can be understood by remembering that it sends the tensor algebra (tensor products taken levelwise) on a cosimplicial space $V$  to the tensor algebra (with graded tensor products) on the cosimplicial normalisation $N_cV$ (given by $(N_cV)^n:= \{v \in V^n ~:~ \sigma^iv=0 ~\forall i\}$ with differential $\pd= \sum_i (-1)^i \pd^i$).


%

For any cosimplicial chain DGAA $A$, we then have a stacky DGAA $D^*A$ concentrated in non-negative cochain degrees. The proof of 
 \cite[Lemma \ref{poisson-Dstarlemma}]{poisson} adapts to show that  
$D^*$ is a left Quillen functor from the Reedy model structure on cosimplicial chain DGAAs to the model structure of Lemma \ref{bidgaamodel}.

Since   $DA$ is a pro-nilpotent extension of $A^0$, when $A \in DG^+dg_+\Alg(R)$ we think of the simplicial presheaf  $\oR \Spec^{nc} DA$ as a stacky derived thickening of the non-commutative derived affine scheme $\oR \Spec^{nc} A^0$. 

\begin{example}\label{DstarBGex}
Given $A \in dg_+\Alg(R)$, we can consider the derived version $[\oR \Spec^{nc} A/\oR\bG_m] $ of the  NC prestack  $[\Spec^{nc} A/\bG_m] $ of Example \ref{repnex}, where $\bG_m$ acts by conjugation. In the model category of derived NC prestacks, this is the homotopy colimit of a simplicial derived NC affine stack given in simplicial degree $n$ by  $\oR \Spec^{nc} A\by \oR\bG_m^n$, so is associated to a cosimplicial chain DGAA $B$ given in cosimplicial degree $n$ by $A\<t_1^{\pm}, \ldots, t_n^{\pm}\>$. 

Then $D^*B$ is isomorphic to the stacky DGAA $A\<s\>$ for $s \in (D^*B)^1_0$, with $\pd s = s^2$ and $\pd a = sa-as$, for $a \in A$. We can loosely think of this as representing $[\Spec^{nc} A/\g_m]$, for $\g_m$  an infinitesimal neighbourhood of $1 \in \bG_m$. In this non-commutative context, such formal group schemes $\g$ correspond to non-unital associative algebras rather than Lie algebras, and $\g_m =R$ with its usual multiplication (whereas the formal neighbourhood $\g_a$ of the additive group $\bG_a$ is $R$ with zero multiplication). 

In general, there is a similar bar construction $[\Spec^{nc} A/\g]$  whenever a non-unital associative algebra $\g$ acts on $A$ in the form of a morphism $\alpha \co \g \to \Der(A, A\ten A)$ satisfying $\alpha(uv)(a)'\ten 1 \ten \alpha(uv)(a)''= \alpha(u)(\alpha(v)(a)')\ten \alpha(v)(a)'' + \alpha(u)' \ten \alpha(v)(\alpha(u)(a)'')$ in sumless Sweedler notation.
\end{example}

\begin{definition}
 Given a chain cochain complex $V$, define the cochain complex $\hat{\Tot} V \subset \Tot^{\Pi}V$ by
\[
(\hat{\Tot} V)^m := (\bigoplus_{i < 0} V^i_{i-m}) \oplus (\prod_{i\ge 0}   V^i_{i-m})
\]
with differential $\pd \pm \delta$. 
\end{definition}
The key property of the semi-infinite total complex $\hat{\Tot}$ is that it sends levelwise quasi-isomorphisms in the chain direction to quasi-isomorphisms; the same is not true in general of the sum and product total complexes $\Tot, \Tot^{\Pi}$, cf. \cite[\S 5.6]{W}.

\begin{definition}
 Given a stacky DGAA $A$ and $A$-modules $M,N$ in chain cochain complexes, we define the chain cochain complex 
$\cHom_A(M,N)$  by 
\[
 \cHom_A(M,N)^i_j=  \Hom_{A^{\#}_{\#}}(M^{\#}_{\#},N^{\#[i]}_{\#[j]}),
\]
with differentials  $\pd f:= \pd_N \circ f \pm f \circ \pd_M$ and  $\delta f:= \delta_N \circ f \pm f \circ \delta_M$,
where $V^{\#}_{\#}$ denotes the bigraded vector space underlying a chain cochain complex $V$. 

We then define the  $\Hom$ complex $\hat{\HHom}_A(M,N)$ by
\[
 \hat{\HHom}_A(M,N):= \hat{\Tot} \cHom_A(M,N).
\]
\end{definition}
Note that $\hat{\Tot}$ is lax monoidal with respect to tensor products, which means in particular that   there is a multiplication $\hat{\HHom}_A(M,N)\ten_R \hat{\HHom}_A(N,P)\to \hat{\HHom}_A(M,P)$  (the same is not true for $\Tot^{\Pi} \cHom_A(M,N)$ in general).

\begin{definition}\label{Omegastackydef}
 Given  a morphism $C \to A$ in $DGdg\Alg(R)$, we define the $A^{e}$-module $\Omega^1_{A/C}$ in double complexes to be the kernel of the multiplication map $A\ten_C A \to A$, and we  denote its differential (inherited from $A$) by $\delta$. 

We denote by $\oL\Omega^1_{A/C}$ the cotangent complex, given by 
\[
 \cocone(A\ten^{\oL}_CA \to A),
\]
regarded as an $A^{\oL,e}$-module in double complexes, where the derived tensor product is taken with respect to levelwise quasi-isomorphisms, and cocone is taken in the chain direction.
\end{definition}

\begin{remarks}\label{cotstackyremarks}
Observe that for an $A$-bimodule $M$, an $A$-bilinear map $\Omega^1_{A/C} \to M$ is essentially the same thing as an $C$-bilinear derivation $A\to M$, via the universal derivation $d \co A \to \Omega^1_{A/C}$ given by $da=a\ten 1 - 1\ten a$. 

Note that if $C \to A$ is a morphism in $DGdg_+\Alg(R)$, then  $\oL\Omega^1_{A/C}$   is quasi-isomorphic to 
 $A\ten_{\tilde{A}}\Omega^1_{\tilde{A}/C}\ten_{\tilde{A}}A$ for any factorisation $C \to \tilde{A} \to A$ with $\tilde{A} \to A$ a quasi-isomorphism and $\tilde{A}_{\#}^{\#}$ flat as a left or right $C_{\#}^{\#}$-module; in particular this applies if $\tilde{A}$ is a cofibrant replacement of $A$ over $C$. 
\end{remarks}

Writing $\Omega^1_A:= \Omega^1_{A/R}$, we have:
\begin{definition}\label{hetdef}
 A morphism  $A \to B$ in $DG^+dg\Alg(R)$ is said to be  homotopy \'etale when the maps 
\[
  (\oL\Omega_{A}^1\ten_{A^{\oL,e}}^{\oL}(B^{\oL,e})^0)^i \to (\oL\Omega_{B}^1\ten_{B^{\oL,e}}^{\oL}{B^{\oL,e}}^0)^i
\]
are quasi-isomorphisms for all $i \gg 0$, and 
\[
\Tot \sigma^{\le q} (\oL\Omega_{A}^1\ten_{A^{\oL,e}}^{\oL}(B^{\oL,e})^0) \to \{\Tot \sigma^{\le q}(\oL\Omega_{B}^1\ten_{B^{\oL,e}}^{\oL}{B^{\oL,e}}^0)
\]
is a quasi-isomorphism for all $q \gg 0$, where $\sigma^{\le q}$ denotes the brutal cotruncation
\[
 (\sigma^{\le q}M)^i := \begin{cases} 
                         M^i & i \le q, \\ 0 & i>q.
                        \end{cases}
\]
\end{definition}


\subsubsection{Modules over stacky DGAAs}\label{modDGAAsn}

The following is adapted from the corresponding results for stacky CDGAs in \cite[\S 3.1.1]{poisson}.

\begin{definition}\label{almostdef}
 As for instance in \cite[Definition \ref{stacks2-delta*}]{stacks2}, define almost cosimplicial diagrams to be functors on the subcategory  $\Delta_*$  of the ordinal number category $\Delta$ containing only  those morphisms $f$ with $f(0)=0$; define almost simplicial diagrams dually. Thus an almost simplicial diagram $X_*$ in $\C$ consists of objects $X_n \in \C$, with all of the operations $\pd_i, \sigma_i$ of a simplicial diagram except $\pd_0$,  satisfying the usual relations. 
\end{definition}

Given a simplicial (resp. cosimplicial) diagram $X$, we write $X_{\#}$ (resp. $X^{\#}$) for the underlying almost simplicial  (resp. almost cosimplicial) diagram.

The denormalisation functor $D$ descends to a functor from graded associative algebras to almost cosimplicial algebras, with $D^*$ thus descending to a functor in the opposite direction. In other words, $(D^*B)^{\#}$ does not depend on $\pd^0_B$, and  $\pd^0_{DA}$ is the only part of the structure on  $DA$ to depend on $\pd_A$. The same is true for the left adjoint $D^*_{\Mod}$ of the denormalisation functor $D$ from cochain $A$-modules to  cosimplicial $DA$-modules. 

The following is now an immediate consequence of the  Dold--Kan correspondence for almost cosimplicial $A^0$-modules.  
\begin{lemma}\label{denormmod}
Take $A \in DG^+dg\Alg(\Q)$ and a right $DA$-module $M$ in cosimplicial chain complexes, such that the underlying   almost cosimplicial (see Definition \ref{almostdef}) graded module $M^{\#}_{\#}$ is isomorphic to $D^{\#}A_{\#}\ten_{A^0_{\#}}L_{\#}$ for an almost cosimplicial graded $A^0_{\#}$-module $L$. Then 
\[
 D^*_{\Mod}(M)^{\#}_{\#} \cong A^{\#}_{\#}\ten_{A^0_{\#}}N_cL_{\#}
\]
as graded $A^{\#}_{\#}$-modules, where $N_c$ denotes Dold--Kan conormalisation. 
\end{lemma}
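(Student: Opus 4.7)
The plan is to reduce the statement to the classical Dold--Kan correspondence applied at the underlying graded level. The key preliminary observation, noted in the paragraph preceding the lemma, is that denormalisation $A\mapsto DA$ involves the cochain differential $\pd_A$ only through the cosimplicial face $\pd^0_{DA}$, which is precisely the operation discarded by the almost cosimplicial structure. Consequently, for any cochain chain $A$-module $N$ the underlying almost cosimplicial graded structure $(DN)^{\#}_{\#}$ depends only on $N^{\#}_{\#}$, so $D$ descends to a functor $D^{\#}$ from graded cochain $A^{\#}_{\#}$-modules to almost cosimplicial graded $D^{\#}A_{\#}$-modules; its left adjoint, which I denote $D^{\#,*}_{\Mod}$, then computes $(D^*_{\Mod}M)^{\#}_{\#}$.

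The main input is the Dold--Kan equivalence for almost cosimplicial modules over a graded ring $R_{\#}$: denormalisation $D^{\#}$ is an equivalence from graded cochain $R_{\#}$-modules to almost cosimplicial $R_{\#}$-modules, with quasi-inverse $N_c$. Applying this with $R_{\#}=A^0_{\#}$ and combining it with the extension-of-scalars adjunctions along the graded ring inclusions $A^0_{\#}\hookrightarrow A^{\#}_{\#}$ and $A^0_{\#}\hookrightarrow D^{\#}A_{\#}$ yields, for any graded cochain $A^{\#}_{\#}$-module $P$, a natural chain of bijections
\begin{align*}
 \Hom_{A^{\#}_{\#}}(A^{\#}_{\#}\otimes_{A^0_{\#}}N_c L_{\#},P)
 &\cong \Hom_{A^0_{\#}}(N_c L_{\#},P)\\
 &\cong \Hom_{A^0_{\#}}(L_{\#},D^{\#}P)\\
 &\cong \Hom_{D^{\#}A_{\#}}(D^{\#}A_{\#}\otimes_{A^0_{\#}}L_{\#},D^{\#}P)
 = \Hom_{D^{\#}A_{\#}}(M^{\#}_{\#},D^{\#}P),
\end{align*}
where the $\Hom$-sets on the middle two lines are taken in graded cochain, respectively almost cosimplicial graded, $A^0_{\#}$-modules. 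Yoneda then identifies $D^{\#,*}_{\Mod}(M^{\#}_{\#})$ with $A^{\#}_{\#}\otimes_{A^0_{\#}}N_cL_{\#}$, as required.

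I do not expect any serious obstacle: the argument is routine manipulation of adjunctions combined with the almost cosimplicial Dold--Kan equivalence. The only point needing genuine care is the initial reduction, that $(D^*_{\Mod}M)^{\#}_{\#}$ is indeed computed by the $\#$-level adjunction $D^{\#,*}_{\Mod}\dashv D^{\#}$; this follows from the fact that the cosimplicial $DA$-module compatibility conditions involve the cochain differential of $A$ only through $\pd^0_{DA}$, so on underlying almost cosimplicial graded structures they reduce precisely to almost cosimplicial $D^{\#}A_{\#}$-linearity.
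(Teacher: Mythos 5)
Your proof is correct and takes essentially the same route as the paper, which simply cites the Dold--Kan correspondence for almost cosimplicial $A^0$-modules together with the observation in the preceding paragraph that $D$ and $D^*_{\Mod}$ descend to the underlying almost cosimplicial / graded level. Your chain of induction--restriction adjunctions combined with the almost Dold--Kan equivalence over $A^0_{\#}$, followed by Yoneda, is exactly the routine unpacking of that appeal.
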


The following is \cite[Lemma \ref{poisson-Homrepmod}]{poisson}, which adapts to the non-commutative setting with exactly the same proof:
\begin{lemma}\label{Homrepmod}
 For $A \in DG^+dg\Alg(\Q)$,  a  levelwise cofibrant $DA$-module $M $ in $cdg\Mod(DA)$, and  $P \in DGdg\Mod( A)$, there is a canonical quasi-isomorphism
\[
 \oR \HHom_{DA}(M, DP) \simeq \Tot^{\Pi}\sigma^{\ge 0}\cHom_{A}(D_{\Mod}^*M, P).
\]
\end{lemma}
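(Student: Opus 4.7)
The plan is to follow the proof of \cite[Lemma \ref{poisson-Homrepmod}]{poisson} essentially verbatim, since every step operates on the module side via Dold--Kan and is insensitive to whether the coefficient algebra is commutative. First, because $M$ is levelwise cofibrant as a $DA$-module, I can compute $\oR\HHom_{DA}(M,DP)$ using $M$ itself without further replacement. The goal is to construct a canonical comparison map
\[
\HHom_{DA}(M, DP) \lra \Tot^{\Pi}\sigma^{\ge 0}\cHom_A(D^*_{\Mod}M, P)
\]
and verify that it is a quasi-isomorphism.

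The comparison is built from the module-level adjunction $D^*_{\Mod} \dashv D$, which at the underived level supplies $\Hom_{DA}(M,DP)\cong \Hom_A(D^*_{\Mod}M,P)$ in each bidegree. Lemma \ref{denormmod} provides the concrete graded description $D^*_{\Mod}M \cong A\ten_{A^0}N_cL$ when $M$ is generated by an almost cosimplicial $A^0$-module $L$, and this is exactly the identification that lets one track how the cosimplicial structure on the left-hand side corresponds, under Dold--Kan conormalisation in the cochain direction, to the cochain differential $\pd$ of $\cHom_A(D^*_{\Mod}M,P)$. The brutal cotruncation $\sigma^{\ge 0}$ appears because $D^*_{\Mod}M$ and $P$ are concentrated in non-negative cochain degrees, so cochain components of negative degree on $\cHom$ cannot contribute to maps $M \to DP$; the product totalisation $\Tot^{\Pi}$ is forced because we take products in the cosimplicial direction.

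For the quasi-isomorphism statement, I would reduce to the case of a levelwise free $DA$-module $M = DA\ten_{A^0}L$ using the cofibrancy hypothesis, via a cellular/filtered colimit argument, where the comparison may be evaluated directly from the adjunction. The main obstacle is bookkeeping rather than novelty: carefully tracking the interaction of the chain differential $\delta$ on $P$ with the cochain differential $\pd$ arising from the cosimplicial structure when passing to $\Tot^{\Pi}$, and confirming that the truncation $\sigma^{\ge 0}$ loses no information after normalisation. Since no step invokes commutativity of $A$ or $P$---the Dold--Kan correspondence, the almost cosimplicial description of Lemma \ref{denormmod}, and the attendant adjunctions all make sense for associative algebras and their bimodules---the commutative proof transfers without modification.
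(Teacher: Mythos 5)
Your proposal takes essentially the same approach as the paper: the paper's entire ``proof'' is the remark that \cite[Lemma \ref{poisson-Homrepmod}]{poisson} adapts to the non-commutative setting with exactly the same argument, which is precisely your observation that the Dold--Kan machinery, the almost cosimplicial description of Lemma~\ref{denormmod}, and the $D^*_{\Mod}\dashv D$ adjunction are all module-level constructions that never invoke commutativity of $A$ or $P$. Your added sketch of the comparison map, the role of $\sigma^{\ge 0}$ and $\Tot^\Pi$, and the reduction to levelwise free modules via cofibrancy is consistent with (and fills in) what the cited proof does.
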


\begin{definition} \label{Cartesiandef}
 Given a stacky DGAA $A\in DG^+dg\Alg(\Q)$, say that a right $A$-module $M$ in cochain chain complexes is homotopy-Cartesian if the natural maps
\[
 A^i\ten^{\oL}_{A^0}M^0 \to M^i
\]
 are quasi-isomorphisms of chain complexes for all $i$. Similarly, we say that an $A$-bimodule is homotopy-Cartesian if it is homotopy-Cartesian as a module over the stacky DGAA $A^{\oL,e}= A\ten^{\oL}_R A^{\op}$.
\end{definition}

If  $C$ is a cosimplicial diagram of DGAAs, and  $M$ a right $C$-module which is homotopy-Cartesian in the sense that   $\pd^i_M \co M^n\ten^{\oL}_{C^n,\pd^i_C}C^{n+1} \to M^{n+1}$ is a  quasi-isomorphism for all $\pd^i$, then 
the map 
\[
 (\oL\eta^*M)^0\ten_{D^*C^0}^{\oL}D^{\#}D^*C \to (\oL\eta^*M)^{\#} 
\]
 of almost cosimplicial chain complexes  induced by  the unit $\eta \co C \to DD^*C$ of the adjunction   is a levelwise quasi-isomorphism. 
%
Applying $\oL D^*_{\Mod}$, it then follows from Lemma \ref{denormmod} 
that the map
$(\oL\eta^*M)^0\ten_{D^*C^0}^{\oL} D^*C^{\#} \to \oL D^*_{\Mod}(\oL\eta^*M)^{\#}$ is a levelwise quasi-isomorphism, so $\oL D^*_{\Mod}(\eta^*M)^{\#}$ is also homotopy-Cartesian.

\subsection{Formal representability for derived NC  prestacks}\label{formalrepsn}

\subsubsection{Functors on stacky DGAAs}

\begin{definition}\label{Dlowerdef}
 Given a functor $F:dg_+\Alg(R) \to s\Set$, we define a functor $D_*F$ on  $DG^+dg_+\Alg(R)$ as the homotopy limit
\[
 D_*F(B):= \ho\Lim_{n \in \Delta} F(D^nB),
\]
for the cosimplicial denormalisation functor $D\co DG^+dg_+\Alg(R) \to dg_+\Alg(R)^{\Delta} $  (cf. \cite[Definition \ref{ddt1-nabla}]{ddt1}).
\end{definition}
Thus a model for $D_*F$ is the derived total space
\[
 \oR \Tot F(D^{\bt}B)  =\{ x \in \prod_n \oR F(D^nB)^{\Delta^n}\,:\, \pd^i x_n = \pd_i^{\Delta}x_{n+1},\,\sigma^i x_n = \sigma_i^{\Delta}x_{n-1}\},
\]
of
\cite[\S VIII.1]{sht}, where $\oR F(D^{\bt}B) $ is a  Reedy fibrant replacement of the cosimplicial space $F(D^{\bt}B) $,  the simplicial sets $Y^{\Delta^n}$ are given by $(Y^{\Delta^n})_m:= \Hom_{s\Set}(\Delta^m \by \Delta^n, Y)$,   and $\pd i^{\Delta}, \sigma_i^{\Delta}$ are defined in terms of the face and degeneracy maps between the simplices $\Delta^n$. 

\begin{example}\label{DstarPerf}
We can apply this definition to the derived NC moduli prestacks $\Perf$  of perfect complexes from  Definition \ref{Perfdef} and $\cP$ of finite projective modules from Proposition \ref{dprojgeomprop}.

 Given a cosimplicial DGAA $C \in dg_+\Alg(R)^{\Delta}$,  the space $\ho\Lim_{n \in \Delta}\Perf(C^n)$ is equivalent to the space of homotopy-Cartesian perfect right $C$-modules. It thus follows from \S \ref{modDGAAsn} (also see \cite[\S \ref{smallet-qcohsn}]{smallet}) that $D_*\Perf(B)$ is equivalent to the space of homotopy-Cartesian right $B$-modules $P$ in double complexes  for which $P^0$ is perfect over $B^0$, with equivalences defined levelwise in the chain direction. When $B$ is the stacky DGAA $\Omega^{\bt}_A$ of Definition \ref{Omegabtdef}, these objects are equivalent to perfect $A$-modules with flat non-commutative connections, in a suitably homotopy-coherent sense.

Similarly,  $D_*\cP(B)$ is equivalent to the space of homotopy-Cartesian right $B$-modules $P$ in double complexes  for which the right $B^0$-module $P^0$ is finite and projective. 

This construction thus gives an efficient characterisation of NC derived moduli of real local systems on a manifold $X$, as $B \mapsto D_*\cP(A^{\bt}(X)\ten_{\R}B)$, where $A^{\bt}(X)$ is the de Rham complex of infinitely differentiable forms on $X$, regarded as a stacky CDGA concentrated in chain degree $0$. This works because projective $A^0(X)$-modules correspond to finite rank vector bundles on $X$, and the additional structure gives the data of an  $\infty$-connection, along the lines of \cite{BlockSmith}. Similarly,   $B \mapsto D_*\Perf(A^{\bt}(X)\ten_{\R}B)$ gives derived moduli of complexes of sheaves with finite-dimensional locally constant homology. 
In this case, commutativity of $A^{\bt}(X)$ manifests itself in a symmetric lax monoidal structure on these functors $F$, with $F(B)\by F(C) \to F(B\ten_{\R}C)$ given by $(\sE,\sE')\mapsto \sE\ten_{A^{\bt}(X)}\sE'$. 
\end{example}

\begin{lemma}\label{DFtgtlemma}
Take a stacky DGAA $B \in DG^+dg_+\Alg(R) $, a $B^0$-bimodule $M$ in double complexes, and a homotopy-preserving homogeneous functor $F\co dg_+\Alg(R) \to s\Set$ with a cotangent complex $L$ at $x \in F(B^0)$ in the sense of Definition \ref{Fcotdef}. Then for any $\tilde{x}\in D_*F(B)$ with image $x \in F(B^0)$, we  have a natural equivalence
\[
 D_*F(B\oplus M\eps) \by^h_{D_*F(B)}\{\tilde{x}\} \simeq \map_{dg\Mod(B^0)}(L, \Tot^{\Pi}N_cM),
\]
where $\eps$ is central and square-zero, $\Tot^{\Pi}$ denotes the product total complex, and $N_c$ the cosimplicial normalisation. 
\end{lemma}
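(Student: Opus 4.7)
The plan is to unpack the definition of $D_*F$, commute the homotopy fibre past the homotopy limit, and then compute each cosimplicial level using homogeneity of $F$ and the cotangent complex at $x$.

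First, since the cosimplicial denormalisation $D$ preserves products (being a right adjoint), applying it to the square-zero extension $B\oplus M\eps\to B$ gives $D^n(B\oplus M\eps)\cong D^nB \oplus (D^nM)\eps$ for each $n$, where $D^nM$ is a $D^nB$-bimodule in chain complexes. Since homotopy limits commute with homotopy fibres,
\[
 D_*F(B\oplus M\eps)\by^h_{D_*F(B)}\{\tilde{x}\} \;\simeq\; \ho\Lim_{[n]\in \Delta}\bigl(F(D^nB \oplus D^nM\eps) \by^h_{F(D^nB)}\{x_n\}\bigr),
\]
where $x_n \in F(D^nB)$ denotes the image of $\tilde{x}$ in cosimplicial degree $n$.

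For each $n$, the next step is to reduce to the tangent functor at the single point $x$. Let $\iota\co B^0 \hookrightarrow D^nB$ be the canonical inclusion arising from the Dold--Kan summand $[n]\twoheadrightarrow [0]$; the pullback of the square-zero extension $D^nB \oplus D^nM\eps \twoheadrightarrow D^nB$ along $\iota$ is the DGAA $B^0 \oplus D^nM\eps$, where $D^nM$ is restricted to a $B^0$-bimodule along $\iota$. Applying homogeneity of $F$ (Definition \ref{hhgsdef}) to the resulting Cartesian square
\[
\begin{CD}
 B^0 \oplus D^nM\eps @>>> D^nB \oplus D^nM\eps\\
 @VVV @VVV \\
 B^0 @>\iota>> D^nB
\end{CD}
\]
gives $F(B^0\oplus D^nM\eps)\simeq F(B^0)\by^h_{F(D^nB)}F(D^nB \oplus D^nM\eps)$, and taking homotopy fibres over $x$ (whose image under $F(\iota)$ is $x_n$) yields
\[
 F(B^0\oplus D^nM\eps)\by^h_{F(B^0)}\{x\} \;\simeq\; F(D^nB \oplus D^nM\eps)\by^h_{F(D^nB)}\{x_n\}.
\]
The left-hand side is precisely the tangent functor $T_x(F/R)(D^nM)$, so by Definition \ref{Fcotdef} it is equivalent to $\map_{dg\Mod(B^0)}(L, D^nM)$, naturally in $n$.

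Finally, continuity of the mapping space in its target entry gives
\[
\ho\Lim_{n\in\Delta}\map_{dg\Mod(B^0)}(L, D^nM) \;\simeq\; \map_{dg\Mod(B^0)}\bigl(L, \ho\Lim_{n\in \Delta} D^nM\bigr),
\]
and the cosimplicial Dold--Kan correspondence identifies the homotopy limit of the cosimplicial chain complex $D^{\bt}M$ with the product total complex $\Tot^{\Pi}N_c M$ of the associated double complex, yielding the claimed equivalence.

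The step I expect to be most delicate is the levelwise reduction in the second paragraph: one must verify that the displayed square is a genuine pullback in $dg_+\Alg(R)$ (the key point is that $D^nM$ restricted along $\iota$ carries the same underlying abelian group as before), that the right vertical map is a bona fide square-zero extension so that homogeneity applies with $\iota$ playing the role of the ``any map'' in Definition \ref{hhgsdef}, and that all the identifications are natural in $[n]\in\Delta$ so that the subsequent commutation of $\ho\Lim_n$ with $\map(L,-)$ is justified; once these naturality checks are in place, the remaining manipulations are formal.
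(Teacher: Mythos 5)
Your proof is correct and follows essentially the same strategy as the paper: unwind $D_*F$ into a homotopy limit, use homogeneity levelwise to reduce from $D^nB$ to $B^0$, invoke the cotangent complex at $x$, and finish with the cosimplicial Dold--Kan identification $\ho\Lim_n D^nM \simeq \Tot^{\Pi}N_c M$. The one place where your route genuinely diverges is the homogeneity step. You pull the square-zero extension $D^nB\oplus D^nM\eps \to D^nB$ back along a chosen coface $\iota\co B^0 \to D^nB$. The paper instead observes that $B\oplus M\eps$ factors as $(B^0\oplus M\eps)\by_{B^0}B$ \emph{at the stacky DGAA level}, using the canonical surjection $B\to B^0$; applying $D^n$ then gives the pullback of the square-zero extension $B^0\oplus D^nM\eps\to B^0$ along the codegeneracy $D^nB\to B^0$. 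Since the codegeneracy $[n]\twoheadrightarrow[0]$ is unique, the resulting levelwise equivalences are manifestly compatible with the $\Delta$-structure, whereas your $\iota$ involves a choice among $n+1$ cofaces, and those choices cannot be made functorially in $[n]\in\Delta$ (e.g. $\pd^0$ does not preserve the vertex $0$). You flag this naturality concern explicitly, and it does resolve: the equivalence you produce via $\iota$ is a homotopy inverse to the codegeneracy-induced one (because $\sigma\circ\iota=\id$, and the $B^0$-action on $D^nM$ restricted along any $\iota$ agrees with the direct one for the same reason), so it is independent of the choice of $\iota$ and the levelwise identifications do assemble. But the paper's factorisation at the stacky level makes this issue disappear at the source, so if you want a cleaner write-up you should run the argument through $B\to B^0$ rather than through a coface.
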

\begin{proof}
Since we have $B\oplus M\eps = (B^0\oplus M\eps)\by_{B^0}B$ with $D^iB \to B^0$ a nilpotent (in fact $i$-nilpotent) surjection, we have 
 \begin{align*}
 D_*F(B\oplus M\eps) \by^h_{D_*F(B)}\{\tilde{x}\} &\simeq D_*F(B^0\oplus M\eps) \by^h_{F(B^0)}\{x\}\\
 &\simeq  \ho\Lim_{n \in \Delta} F(B^0\oplus D^nM\eps)\by^h_{F(B^0)}\{x\}\\
 &\simeq  \ho\Lim_{n \in \Delta} \map_{dg\Mod(B^0)}(L, D^nM)\\
 &\simeq   \map_{dg\Mod(B^0)}(L, \ho\Lim_{n \in \Delta}D^nM)\\
& \simeq \map_{dg\Mod(B^0)}(L, \Tot^{\Pi}N_cM). \qedhere
 \end{align*}
\end{proof}


\begin{definition}\label{Omegabtdef}
 Given $B \in DG^+dg_+\Alg(R)$ (or more generally in ``$G^+dg_+\Alg(R)$'' as the construction is independent of $\pd$), define $\Omega^{\bt}_B \in DG^+dg_+\Alg(R)$ to be the bigraded associative algebra given by the tensor algebra construction
 \[
  (\Omega^{\bt}_B)^{\#}:= \bigoplus_{p \ge 0} (\Omega^p_B)^{[-p]} =  \bigoplus_{p \ge 0} (\underbrace{\Omega^1_B\ten_B\Omega^1_B\ten_B\ldots \ten_b\Omega^1_B} )^{[-p]},
 \]
equipped with the de Rham differential $\pd:=d \co \Omega^p_B \to \Omega^{p+1}_B$ in the cochain direction, and the usual structural differentials $\delta$ in the chain direction.
 \end{definition}
 Note that we could alternatively characterise this construction by the property that it is left adjoint to the functor which forgets the cochain differential $\pd$.

\begin{proposition}\label{cosimplicialresnprop}
 If a homotopy-preserving homogeneous  functor $F:dg_+\Alg(R) \to s\Set$ has a cotangent complex at a point  $x \in F(A)$ which is homologically bounded below, then the functor 
 \[
\hat{F}_x \co   B \mapsto D_*F(B)\by_{F(B^0),x^*}^h\map_{dg_+\Alg(R)}(A,B^0)
 \]
on the category $DG^+dg_+\Alg(R) $ 
is representable by a cosimplicial homotopy  \'etale diagram $C(-) \co \Delta \to DG^+dg_+\Alg(R) $, in the sense that
\[
 \hat{F}_x(B) \simeq \ho\LLim_{j \in \Delta} \map_{DG^+dg_+\Alg(R)}(C(j),B),
\]
naturally in $B$. 
\end{proposition}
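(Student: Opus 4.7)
The plan is to construct $C(\bullet)$ by an inductive Postnikov--style argument in the spirit of Proposition \ref{derivedatlasprop}, but promoted to the bigraded (stacky DGAA) setting so as to accommodate the bounded-below cotangent complex. First, observe that $\hat{F}_x$ inherits homotopy--preservation and homogeneity from $F$: homogeneity, for instance, uses that the cosimplicial denormalization $D^{\bullet}$ carries square-zero extensions of stacky DGAAs to levelwise square-zero extensions of cosimplicial DGAAs, combined with the homogeneity of $F$ at each level. Moreover, by Lemma \ref{DFtgtlemma}, $\hat{F}_x$ has a cotangent complex at each point lying over $x$, given by a semi-infinite total complex built from $\bL_{F/R,x}$. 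Thus $\hat{F}_x$ satisfies analogues of the hypotheses of Proposition \ref{derivedatlasprop}, and it remains to upgrade the conclusion from a single DGAA to a cosimplicial stacky diagram.

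Next, one adapts the inductive construction underlying Proposition \ref{derivedatlasprop}. Rather than producing a single tower $\{\tilde{A}(n)\}_n$ of DGAAs whose limit represents $\hat{F}_x$, one builds a doubly-indexed tower: at each chain-degree Postnikov step one lifts by an obstruction class in $\Ext^*_{\text{bimod}}(\bL, -)$ exactly as before; but simultaneously, to keep each stage homotopy \'etale over $\hat{F}_x$ in the sense of Definition \ref{hetdef}, one kills any surviving positive-cochain-degree piece of the relative cotangent complex by freely attaching new generators in the cochain direction. Because $\bL_{F/R,x}$ is homologically bounded below and the relevant truncations $\sigma^{\le q}$ appear in Definition \ref{hetdef}, this cochain-direction process is controlled. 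The cosimplicial face and degeneracy maps on $C(\bullet)$ come from the corresponding cosimplicial structure of $D^{\bullet}B$, via the adjunction $D^{*}\dashv D$ applied to the cosimplicial pieces of the tower; the resulting structure maps $C(m)\to C(n)$ are all homotopy \'etale, again by the cotangent criterion of Definition \ref{hetdef}.

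Finally, the representability statement $\hat{F}_x(B)\simeq\ho\LLim_{j\in\Delta}\map(C(j),B)$ is verified by comparing both sides on each cosimplicial level: the right-hand side, as the geometric realization of the simplicial space $\map(C(\bullet),B)$, has its Bousfield--Kan spectral sequence governed by $\Ext^{*}(\bL, N_cB)$, which matches the spectral sequence for the totalization $\ho\Lim_n F(D^nB)$ defining $D_{*}F(B)$ (via Lemma \ref{DFtgtlemma} at each stage of the Postnikov tower of $B$ together with nilcompleteness). The main obstacle will be the simultaneous management of both gradings in the construction of Step 2: ensuring that one can carry out the obstruction-killing process in the cochain direction without disrupting the chain-direction Postnikov step already performed, and that the resulting structure maps between successive $C(n)$ satisfy the somewhat delicate truncated quasi-isomorphism condition of Definition \ref{hetdef}. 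The bounded-below assumption on $\bL_{F/R,x}$ is precisely what bounds the cochain direction and makes this bookkeeping feasible.
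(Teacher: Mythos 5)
Your plan diverges from the paper's proof in its basic strategy, and several of your steps would fail or are missing essential ideas.

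The paper's induction runs \emph{only} over cochain degree: it decomposes $\hat F_x(B)\simeq\ho\Lim_m\hat F_x^{(m)}(B)$ where $\hat F_x^{(m)}(B)=D_*F(B^{\le m})\by^h_{F(B^0)}\map(A,B^0)$, exploiting the fact that $D^iB$ depends only on $B^{\le i}$. There is no chain-direction Postnikov tower of $B$ anywhere in the proof; the chain direction is absorbed into the choice of a cofibrant, strictly bounded-below model $L$ for the cotangent complex. Your ``doubly-indexed tower'' mixing a chain-degree Postnikov step with a cochain-degree attachment step is not what happens, and it is unclear the two inductions could be decoupled cleanly: the paper's single cochain-degree filtration is precisely what makes the obstruction calculus (Lemma \ref{obs} applied to $D^i(B^n)\to D^i(B^{\le n})\to D^i(B^{\le n-1})$) well organised.

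Three specific gaps. First, you never explain where the cosimplicial index $j\in\Delta$ actually comes from. The paper builds the $\Delta$-diagram by taking tensor products $L\ten\Delta^j$ with simplicial chains, which is a Reedy cofibrant resolution of $L$ in $dg_+\Mod(A)^\Delta$; this is what allows the obstruction class $o_e(u)\in\ho\Lim_j\map_{dg\Mod(A)}(L,C^{(n-1)}(j)^n_{[n-1]})$ to be represented by honest maps of cosimplicial modules. Your suggestion that the cosimplicial structure comes ``from the corresponding cosimplicial structure of $D^\bullet B$ via $D^*\dashv D$'' does not produce such a diagram; indeed, if it did, one could drop the $\ho\LLim_j$ from the statement, but a single stacky DGAA is not enough. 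Second, you omit the base case $C^{(0)}(j)=\Omega^\bullet_A$ (the non-commutative de Rham double complex), which is the crucial starting point and the source of all subsequent pushouts $\Omega^\bullet_{A\langle\cdots\rangle}$. Third, you invoke nilcompleteness, which is a hypothesis of Proposition \ref{derivedatlasprop} but emphatically \emph{not} of this proposition — the whole point of \S\ref{stackysn} is to cover prestacks (such as $\Perf_{\cA}$ over a proper dg category) where geometric atlases and the associated hypotheses are not available. The paper's use of the bounded-below hypothesis on $L$ is also different from yours: it enters only at the end, to show that the truncated total complexes $\Tot^\Pi\sigma^{\le n}(\Omega^1_{C(j)}\ten A^{\oL,e})$ stabilise to $L$ for $n>M$ and hence that the maps $C(j)\to C(k)$ satisfy Definition \ref{hetdef}, not to ``control a cochain-direction process.''
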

\begin{proof}
  We will construct the representing object $C(-)$ by an inductive process, as a filtered colimit $\LLim_i C^{(i)}(-)$, with $C^{(i)}(-)$ representing the functor $\hat{F}_x^{(i)}\co B \mapsto D_*F(B^{\le i})\by_{F(B^0),x^*}^h\map(A,B^0)$. Since the DGAA $D^iB$ 
 depends only on $B^{\le i}$, we automatically have  
 \begin{align*}
  D_*F(B) &\simeq \ho\Lim_{n \in \Delta} F(D^nB)\\
  &\simeq   \ho\Lim_{n \in \Delta}\ho\Lim_m F(D^n(B^{\le m}))\\
   &\simeq   \ho\Lim_m\ho\Lim_{n \in \Delta} F(D^n(B^{\le m}))\\
   &\simeq \ho\Lim_m  D_*F(B^{\le m}),
 \end{align*}
 and hence $\hat{F}_x(B) \simeq  \ho\Lim_i \hat{F}_x^{(i)}$.
 
Without loss of generality, we may assume that $A$ is cofibrant. Then $\hat{F}_x^{(0)}( B)$ is  represented by the non-commutative de Rham double complex $\Omega^{\bt}_{A/R}$ of Definition \ref{Omegabtdef}.
We thus set $C^{(0)}(j)=\Omega^{\bt}_A$ for all $j \in \Delta$. 

Before proceeding further, observe that since the cotangent complex of $F$ at $x$ exists and is homologically bounded below, we may choose a model $L\in dg\Mod(A)$ for it which is cofibrant and strictly bounded below, so there exists some $M\ge 0$ such that $L_i=0$ for all $i<-M$. 

Now assume that we have constructed $C^{(n-1)}(-)$ with $(C^{(n-1)})^0(-)$ the constant functor $A$. We now wish to represent $\hat{F}_x^{(n)}$, which we can rewrite as $\hat{F}_x^{(n)}(B) =  D_*F(B^{\le n})\by_{F(B^{\le n-1})}^h\hat{F}_x^{(n-1)}(B)$. 
Applying  Lemma \ref{obs}, we can then write 
\[
D_*F(B^{\le n})\simeq D_*F(B^{\le n-1})\by^h_{o_e, D_*F(B^0 \oplus (B^n)^{[-n]}_{[-1]})}F(B^0)
\]
for some obstruction map $o_e\co D_*F(B^{\le n-1}) \to D_*F(B^0 \oplus (B^n)^{[-n]}_{[-1]})$  associated to the square-zero extensions $D^ie \co D^i(B^n) \to D^i(B^{\le n}) \to D^i(B^{\le n})$. This in turn induces an obstruction map
\[
 \hat{F}_x^{(n-1)}(B)\to D_*F(B^0 \oplus (B^n)^{[-n]}_{[-1]})\by_{F(B^0),x^*}^h\map(A,B^0),
\]
so 
we may apply   Lemma \ref{DFtgtlemma} to rewrite this  as
\[
 \hat{F}_x^{(n-1)}(B)\by^h_{\map_{dg_+\Alg(R)}(A,B^0)}\{f\} \to \map_{dg\Mod(A)}(L, f_*B^n_{[n-1]}),
\]
with homotopy fibre $\hat{F}_x^{(n)}(B) \by^h_{\map_{dg_+\Alg(R)}(A,B^0)}\{f\} $ over $0$.

Since $C^{(n-1)}(-)$ represents $\hat{F}_x^{(n-1)}$, we have
a universal element 
\[
u \in \ho\Lim_{j \in \Delta}\hat{F}_x^{(n-1)}(C^{(n-1)}(j))
\]
mapping to the identity in 
\[
\ho\Lim_{j \in \Delta}\map_{dg_+\Alg(R)}(A,C^{(n-1)}(j)^0)= \map_{dg_+\Alg(R)}(A,A).
\]
The obstruction above therefore gives us an element
\[
 o_e(u) \in \ho\Lim_{j \in \Delta}\map_{dg\Mod(A)}(L,C^{(n-1)}(j)^n_{[n-1]}).
\]
Writing $L\ten \Delta^j:= L\ten_{\Z}C_{\bt}(\Delta^j,\Z)$ as the tensor product with simplicial chains, we can 
 represent $o_e(u)$ as a morphism $\{L \ten\Delta^j \to C^{(n-1)}(j)^n_{[n-1]}\}_{j \in \Delta}$ of cosimplicial $A$-modules in chain complexes, since $L$ is cofibrant, and hence $j\mapsto L \ten \Delta^j$ is a cofibrant resolution in the Reedy model category $dg_+\Alg(R)^{\Delta}$. 
We then rewrite this as a morphism $ \{ \sigma_{\ge 0}(L_{[1-n]} \ten \Delta^j) \to C^{(n-1)}(j)^n\}_{j \in \Delta}   $, where $\sigma$ denotes brutal truncation.

We then set $C^{(n)}(j)$ to be the pushout of the diagram
\[
  C^{(n-1)}(j) \la \Omega^{\bt}_{A\<\sigma_{\ge 0}(L_{[1-n]}\ten \Delta^j )^{[-n]}\> } \to  \Omega^{\bt}_{A\<\cone\sigma_{\ge 0}( (L_{[1-n]}\ten \Delta^j ))^{[-n]}\>}, 
\]
the cone being taken in the chain direction, so that $\Hom_{DG^+dg_+\Alg(R)}( C^{(n)}(j),B)$ is the fibre product of the diagram
\[
\xymatrix@R=2ex{
    \Hom_{DG^+dg_+\Alg(R)}( C^{(n-1)}(j),B)\ar_{o_e(u)}[d] & \Hom_{dg_+\Mod(A)}(\cone\sigma_{\ge 0}(L_{[1-n]}\ten \Delta^j ),B^n)\ar[ld]\\
    \Hom_{dg_+\Mod(A)}( \sigma_{\ge 0}L_{[1-n]} \ten \Delta^j,  B^n ) . 
}
    \]
Since the second map in the fibre product is surjective and cones are acyclic, by substituting a simplicial fibrant resolution of $B$ to calculate function complexes as in \cite[\S 5.4]{Hovey},  this gives us a homotopy fibre sequence 
\[
 \xymatrix@R=2ex{\ho\LLim_{j \in \Delta} \map_{\Omega_A^{\bt}}(C^{(n)}(j),B)\ar[r] & \ho\LLim_{j \in \Delta} \map_{\Omega_A^{\bt}}(C^{(n-1)}(j),B)\ar[d]\\ &  \map_{dg\Mod(A)}(L, B^n_{[n-1]}),
 }
\]
where we write $\map_{\Omega_A^{\bt}}:=\map_{\Omega_A^{\bt}\da DG^+dg_+\Alg(R)} $.
Hence by induction, for the given morphism $f \co \Omega^{\bt}_A \to B$, we have
\[
 \ho\LLim_{j \in \Delta} \map_{\Omega_A^{\bt}\da DG^+dg_+\Alg(R)}(C^{(n)}(j),B)\simeq \hat{F}_x^{(n)}(B) \by^h_{\map_{dg_+\Alg(R)}(A,B^0)}\{f^0\},  
\]
and thus
\[
\ho\LLim_{j \in \Delta} \map_{DG^+dg_+\Alg(R)}(C^{(n)}(j),B)\simeq \hat{F}_x^{(n)}(B). 
\]

It only remains to show that the maps $C(j) \to C(k)$ are all homotopy  \'etale. We begin by observing that $\Omega^1_{C^{(0)}(j)}\ten_{C^{(0)}(j)^{\oL,e}}A^{\oL,e}$ is the double complex $\Omega^1_A \oplus  (\Omega^1_A)^{[-1]}$ for all $j$, with cochain differential given by the identity. Then we see that by construction $\Omega^1_{C^{(n)}(j)}\ten_{C^{(n)}(j)^{\oL,e}}A^{\oL,e}$ is an extension of $\Omega^1_{C^{(n-1)}(j)}\ten_{C^{(n-1)}(j)^{\oL,e}}A^{\oL,e}$ by the cochain chain complex $\sigma_{\ge 1-n}(L \ten \Delta^j)_{[-n]}^{[-n]} \xra{\id} \sigma_{\ge 1-n}(L \ten \Delta^j)_{[-n]}^{[-1-n]}$. 

By looking at how the constructions above behave for $B$ of the form $A \oplus M\eps$, we see that the extensions in each cochain degree are given by the obvious surjective maps $\sigma_{\ge 1-n}(L \ten \Delta^j) \to \sigma_{\ge 2-n}(L \ten \Delta^j)$, so it follows that we have  quasi-isomorphisms
\[
 (\Omega^1_{C(j)}\ten_{C(j)^{\oL,e}}A^{\oL,e} )^n \simeq \begin{cases}
                                \Omega^1_A & n=0\\
                                \cone(\sigma_{\ge 0}(L\ten \Delta^j) \to  \Omega^1_A) & n=1\\
                                ((L \ten \Delta^j)_{1-n})_{[-1]} & n \ge 2,   
                               \end{cases}
\]
of chain complexes,
with $\pd \co \H_1( (\Omega^1_{C(j)})^n) \to \H_1( (\Omega^1_{C(j)})^{n+1})$ being given by the structural differential $\delta \co (L \ten \Delta^j)_{1-n} \to (L \ten \Delta^j)_{-n}$. Since product total complexes respect quasi-isomorphisms of bounded above chain complexes, we then have 
\[
 \Tot^{\Pi}\sigma^{\le n}(\Omega^1_{C(j)}\ten_{C(j)^{\oL,e}}A^{\oL,e} ) \simeq \sigma_{\ge 1-n}(L \ten \Delta^j) 
\]
for $n \ge 1$, and hence $ \Tot^{\Pi}\sigma^{\le n}(\Omega^1_{C(j)}\ten_{C(j)^{\oL,e}}A^{\oL,e} ) \simeq L$ for all $n > M$. This gives the quasi-isomorphism $\Tot^{\Pi}\sigma^{\le n}(\Omega^1_{C(j)}\ten_{C(j)^{\oL,e}}A^{\oL,e} ) \simeq \Tot^{\Pi}\sigma^{\le n}(\Omega^1_{C(j)}\ten_{C(k)^{\oL,e}}A^{\oL,e} )$ required, since the inverse systems have stabilised.
\end{proof}

\begin{remark}
If $F$ is represented by a Reedy cofibrant cosimplicial diagram $O(X) \in dg_+\Alg(R)^{\Delta}$, as in \S \ref{derivedgeomprestacksn}, then $D_*F$ is represented by the cosimplicial diagram  $j \mapsto D^*O(X^{\Delta^j})$, so another representing cosimplicial diagram for the functor $\hat{F}_x$ of Proposition \ref{cosimplicialresnprop} is given by $j \mapsto D^*O(X^{\Delta^j})\coprod^{\oL}_{\Omega^{\bt}_{O(X_j)}}\Omega^{\bt}_A=:C'(j)$. 

This looks quite different from the diagram $C(\bt)$ constructed in the proof of  Proposition \ref{cosimplicialresnprop}: whereas
 $U':=\Omega^1_{C'(j)}\ten_{C'(j)^{\oL,e}}\H_0A$ has $\H_n(U')^m=0$ for $m >0$ and $n \ne 0$, the double complex 
$U=\Omega^1_{C(j)}\ten_{C(j)^{\oL,e}}\H_0A$ has $\H_n(U)^m=0$ for $m>1$ and $n\ne 1$. The extra $m=1$ terms enable the construction in Proposition \ref{cosimplicialresnprop} to avoid trying to  embed $\Spec A$ in a derived affine scheme submersive over $F$.
\end{remark}

%


\begin{definition}\label{Frigdef}
 Given a stacky DGAA $B \in DG^+dg_+\Alg(R)$  for which the chain complexes
$
( \oL\Omega^1_B\ten^{\oL}_{B^{\oL,e}}(B^0)^{\oL,e})^i
$
are acyclic for all $i > q$, and  a  homogeneous functor $F\co dg_+\Alg(R) \to s\Set$   with a cotangent complex $L_F(B^0,x)$ at a point $x \in F(B^0)$, we say that a point $y \in D_*F(B)$ lifting $x \in F(B^0)$ is \emph{rigid} if the induced morphism
 \[
  L_F(B^0,x)\to \Tot \sigma^{\le q} \oL\Omega^1_B\ten^{\oL}_{B^{\oL,e}}(B^0)^{\oL,e}
 \]
is a quasi-isomorphism of $B^0$-bimodules. 

We denote by $(D_*F)_{\rig}(B) \subset D_*F(B)$ the space of rigid points (a union of path components).
\end{definition}

\begin{remark}
 Note that for  a point $y \in D_*F(B)$ to be  rigid is the same as saying that it does not deform: for any nilpotent surjection $e \co C \to B$ with a point $z \in D_*F(C)$ lifting $y$, the map $e$  has an essentially unique section $s$ with $s(y) \simeq z$. 
\end{remark}

\begin{definition}
Define the $\infty$-category $\oL dg^+DG^+\Aff(R)$ by localising $DG^+dg_+\Alg(R)^{\op}$ at levelwise weak equivalences, and let $\oL dg^+DG^+\Aff(R)^{\et}$ be the full $2$-sub-$\infty$-category of $ \oL dg^+DG^+\Aff(R) $ with the same objects but only spaces of homotopy  \'etale morphisms (so $\map_{ \oL dg^+DG^+\Aff(R)^{\et}}(A,B) \subset \map_{ \oL dg^+DG^+\Aff(R)}(A,B)$ is a union of path components). We then denote the $\infty$-category of simplicial presheaves on $\oL dg^+DG^+\Aff(R)$  by $\oL (dg^+DG^+\Aff(R)^{\et})^{\wedge}$.
  \end{definition}

Now observe that $(D_*F)_{\rig}$ defines a functor on  $\oL dg^+DG^+\Aff(R)^{\et}$, since rigidity is preserved by homotopy  \'etale morphisms.

\begin{corollary}\label{etsitecor}
  If a homotopy-preserving homogeneous functor $F\co dg_+\Alg(R) \to s\Set$   has bounded below cotangent complexes $L_F(A,x)$ at all points $x \in F(A)$ for all $A$, then
for any homotopy-preserving functor $G \co DG^+dg_+\Alg(R) \to s\Set$, we have a natural weak equivalence
\begin{align*}
 \map_{\oL dg^+DG^+\Aff(R)^{\wedge}}(D_*F,G) &\simeq \map_{\oL (dg^+DG^+\Aff(R)^{\et})^{\wedge}}((D_*F)_{\rig}, \theta_*G)\\
\int_{B \in \cS t}^h \map_{s\Set}(D_*F(B),G(B))  &\simeq \int_{B \in \cS t^{\et}}^h \map_{s\Set}((D_*F)_{\rig}(B),G(B)),
  \end{align*}
  for $\cS t:=\oL DG^+dg_+\Alg(R)$ and  $\theta \co  DG^+dg_+\Alg(R)^{\et} \to DG^+dg_+\Alg(R)$ the inclusion functor.
   \end{corollary}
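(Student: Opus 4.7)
The plan is to exhibit $D_*F$ as a homotopy colimit of representables $\map_{\cS t}(C_{A,x}(j),-)$ coming from a family of homotopy \'etale stacky DGAAs $C_{A,x}(j)$ whose tautological points are rigid, and simultaneously to realize $(D_*F)_{\rig}$ as the same colimit restricted to $\cS t^{\et}$. Both sides of the claimed equivalence will then reduce to the common expression $\ho\Lim_{(A,x,j)} G(C_{A,x}(j))$, yielding the result by standard coend manipulation.

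First, for each cofibrant $A \in dg_+\Alg(R)$ and each point $x \in F(A)$, I apply Proposition \ref{cosimplicialresnprop} (whose hypotheses are precisely our assumptions on $F$) to obtain a cosimplicial homotopy \'etale diagram $C_{A,x}\co \Delta \to DG^+dg_+\Alg(R)$ representing the formal neighborhood
\[
\hat{F}_x(B) := D_*F(B)\by^h_{F(B^0)}\map_{dg_+\Alg(R)}(A,B^0).
\]
Since every point $y \in D_*F(B)$ is canonically a point of $\hat{F}_x(B)$ for $x = y|_{B^0}$ and a chosen cofibrant lift $A \to B^0$, the universal property of Proposition \ref{cosimplicialresnprop} assembles (via homogeneity of $F$, which allows us to vary $A$ over cofibrant replacements) into a presentation
\[
  D_*F \;\simeq\; \ho\LLim_{(A,x,j)} \map_{\cS t}(C_{A,x}(j), -)
\]
of functors on $\cS t$, indexed by the evident category of triples $(A,\, x \in F(A),\, j \in \Delta)$.

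Next, I verify that the tautological point $y_{A,x,j} \in D_*F(C_{A,x}(j))$ (classifying the identity morphism) is rigid in the sense of Definition \ref{Frigdef}. This is exactly the content of the last paragraph of the proof of Proposition \ref{cosimplicialresnprop}, which establishes
\[
 \Tot^{\Pi}\sigma^{\le n}\bigl(\Omega^1_{C_{A,x}(j)}\ten^{\oL}_{C_{A,x}(j)^{\oL,e}} A^{\oL,e}\bigr) \;\simeq\; L_F(A,x)
\]
for all $n$ sufficiently large. Since $C_{A,x}(j)^0 = A$ by construction, this is precisely the rigidity condition. Consequently, each representable $\map_{\cS t}(C_{A,x}(j), -)$ factors through $\cS t^{\et}$ and its restriction lands inside $(D_*F)_{\rig}$, providing a parallel presentation
\[
  (D_*F)_{\rig} \;\simeq\; \ho\LLim_{(A,x,j)} \map_{\cS t^{\et}}(C_{A,x}(j), -)
\]
in the $\infty$-category of presheaves on $\cS t^{\et}$.

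Combining both presentations, for any homotopy-preserving $G$ we obtain
\[
  \map_{\oL dg^+DG^+\Aff(R)^{\wedge}}(D_*F, G)\;\simeq\; \ho\Lim_{(A,x,j)} G(C_{A,x}(j))\;\simeq\; \map_{\oL (dg^+DG^+\Aff(R)^{\et})^{\wedge}}((D_*F)_{\rig}, \theta_*G),
\]
as desired. The main obstacle is justifying the colimit presentation of $D_*F$: one needs a cofinality argument ensuring that every point of $D_*F(B)$ is captured, coherently up to higher simplicial data, by some $C_{A,x}(j) \to B$ in the triple indexing category, without spurious higher homotopies. This should follow from Proposition \ref{cosimplicialresnprop} together with homogeneity of $F$ applied to vary the cofibrant replacement $A$ of $B^0$; once that is settled, rigidity of the tautological points symmetrizes the picture between $\cS t$ and $\cS t^{\et}$, so that passing to the \'etale subsite loses no information.
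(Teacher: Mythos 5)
Your outline matches the paper's approach closely: use Proposition \ref{cosimplicialresnprop} to produce the homotopy \'etale cosimplicial resolutions, observe that the tautological points are rigid, and present both $D_*F$ and $(D_*F)_{\rig}$ as homotopy colimits of these representables, so that $\map$-spaces into any $G$ agree. The paper then finishes slightly differently, via the derived adjunction: letting $\theta^*$ denote restriction to the \'etale subcategory of presheaves, with derived left adjoint $\oL\theta^*$, it shows $\oL\theta^* \hat F_{x,\rig} \simeq \hat F_x$ on representables and passes to colimits to deduce $\oL\theta^*(D_*F)_{\rig} \simeq D_*F$, from which the desired $\map$-equivalence follows by adjunction.

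However, you correctly identify the main obstacle --- justifying the colimit presentation $D_*F \simeq \ho\LLim_{(A,x,j)}\map_{\cS t}(C_{A,x}(j),-)$ --- but the resolution you gesture at (``homogeneity of $F$ applied to vary the cofibrant replacement $A$'') is not the ingredient the paper uses, and it is not clear how homogeneity alone would close the gap. The paper instead deduces
\[
\ho\LLim_{(x,A)}\hat F_x(B)\;\simeq\; D_*F(B)\by^h_{F(B^0)}\ho\LLim_{(x,A)}\map_{dg_+\Alg(R)}(A,B^0)\;\simeq\; D_*F(B)\by^h_{F(B^0)}F(B^0)\;\simeq\; D_*F(B),
\]
where the first equivalence pulls the indexing colimit through the homotopy fibre product using \emph{universality of homotopy colimits in simplicial sets} (colimits commute with pullbacks), and the second uses the co-Yoneda/density theorem that a presheaf is the homotopy colimit of representables over its category of elements. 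Neither of these invokes homogeneity of $F$; they are purely formal facts about simplicial presheaves. Without this universality-plus-density argument your claimed colimit presentation of $D_*F$ (and hence of $(D_*F)_{\rig}$) is not established, so this step should be made explicit rather than left as something that ``should follow.''
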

\begin{proof}
The resolution of Proposition \ref{cosimplicialresnprop} lies in $dg^+DG^+\Aff(R)^{\et}$,   and the canonical elements $u_j \in D_*F(C(j))$ are all rigid. That proposition gives us an expression $ \hat{F}_x \simeq \ho\LLim_{j \in \Delta} \oR \Spec C(j)$, and similarly setting 
\[
\hat{F}_{x,\rig}(B):= (D_*F)_{\rig}(B)\by^h_{F(B^0),x^*}\map(A,B^0)  
\]
gives $
 \hat{F}_{x,\rig} \simeq \ho\LLim_{j \in \Delta} (\oR \Spec C(j))_{\rig}$,
 for  $(\oR \Spec C(j))_{\rig} \in \oL (dg^+DG^+\Aff(R)^{\et})^{\wedge}$ the prestack represented by $C(j)$.

Now $\theta^*\co \oL dg^+DG^+\Aff(R)^{\wedge} \to \oL (dg^+DG^+\Aff(R)^{\et})^{\wedge}$ has a derived left adjoint $\oL \theta^*$ which sends $(\oR \Spec C)_{\rig}$ to $\oR \Spec C$, so we deduce that $\oL \theta^*  \hat{F}_{x,\rig} \simeq \hat{F}_x  $.

 Since homotopy colimits in $s\Set$ are universal (immediate for coproducts, while homtopy pushouts follow from \cite{puppeRmkHtpy}), 
we have that 
\begin{align*}
 \ho\LLim_{(x,A)} \hat{F}_x(B) &\simeq D_*F(B)\by_{F(B^0)}^h\ho\LLim_{(x,A)}\map_{dg_+\Alg(R)}(A,B^0)\\
&\simeq D_*F(B)\by_{F(B^0)}^hF(B^0)\\
&\simeq D_*F(B),
 \end{align*}
 where $(x,A)$ runs over objects of $dg^+DG^+\Aff(R)\da F$,
and similarly 
\[
\ho\LLim_{(x,A)} \hat{F}_{x,\rig}(B)\simeq (D_*F)_{\rig}(B).
\]
Since derived left adjoints commute with homotopy colimits, this gives
\[
 D_*F \simeq \oL\theta^*(D_*F)_{\rig}, 
\]
which is equivalent to the desired statement.
\end{proof}
 
\begin{proposition}\label{replaceprop}
 Given a  derived $\infty$-geometric    Artin  NC prestack $F$ and any homotopy-preserving functor $G \co dg_+\Alg(R) \to s\Set$, we have a natural weak equivalence
 \[
  \map_{\oL DG^+\Aff(R)^{\wedge}}(F,G) \to \map_{\oL dg^+DG^+\Aff(R)^{\wedge}}(D_*F,D_*G).
 \]
\end{proposition}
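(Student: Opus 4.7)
The plan is to construct a homotopy inverse to the given map via restriction along the inclusion $i\co dg_+\Alg(R) \hookrightarrow DG^+dg_+\Alg(R)$ (viewing a DGAA as a stacky DGAA concentrated in cochain degree $0$), and then to verify inverseness after reducing to the affine case via the $\infty$-geometric structure of $F$.

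For $A \in dg_+\Alg(R)$, the cosimplicial DGAA $D(iA)$ is constant on $A$, so $D_*H(iA) = \ho\Lim_n H(A) \simeq H(A)$ for any homotopy-preserving $H$; hence $i^* D_* \simeq \id$. This provides a restriction $i^*\co \map(D_*F, D_*G) \to \map(F, G)$ satisfying $i^* \circ D_* = \id$, so the map in the statement is automatically a section, and the remaining task is to show $i^*$ is itself an equivalence. Using the $\infty$-geometric structure, I will present $F \simeq \ho\LLim_\alpha \oR\Spec^{nc}\! A_\alpha$ via its atlas of open morphisms of sqc $n$-geometric Artin prestacks and hypergroupoid realisations (Definitions \ref{inftygeomdef2} and \ref{geomdef2}). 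The key technical claim will be that $D_*$ is compatible with this presentation: for any $B$,
\[
D_*F(B) = \ho\Lim_{n\in\Delta} F(D^n B) = \ho\Lim_{n\in\Delta}\ho\LLim_\alpha (\oR\Spec^{nc}\! A_\alpha)(D^n B),
\]
and I will need to interchange $\ho\Lim_\Delta$ and $\ho\LLim_\alpha$. This should hold because each hypergroupoid layer has higher homotopy groups vanishing above a fixed degree (cf.\ Remarks \ref{geomrmks2}), so the cosimplicial $\ho\Lim$ reduces to a finite limit at each Postnikov stage and then commutes with filtered colimits and hypergroupoid realisations in $s\Set$. Granting this, both sides of the desired map become $\ho\Lim_\alpha G(A_\alpha)$ and $\ho\Lim_\alpha \map(D_*\oR\Spec^{nc}\! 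A_\alpha, D_*G)$ respectively, reducing the problem to the affine case.

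For $F = \oR\Spec^{nc}\! A$ with $A$ cofibrant, I will exploit that $\map(A,-)$ preserves homotopy limits, so $D_*F(B) \simeq \map(A, \oR\Tot DB)$. A natural transformation $\phi\co D_*F \to D_*G$ restricts to $\phi_{iA}\co \map(A,A) \to G(A)$ and yields an element $g = \phi_{iA}(\id_A) \in G(A) = \map(F,G)$. To recover $\phi$ from $g$, I will invoke naturality of $\phi$ with respect to maps $\alpha\co iA \to B$ of stacky DGAAs: writing $f_\alpha := \oR\Tot D(\alpha)\co A \to \oR\Tot DB$ for the induced element of $D_*F(B)$, naturality forces $\phi_B(f_\alpha) = D_*G(\alpha)(g)$; together with cosimplicial compatibility on the target this should determine $\phi_B$ on arbitrary elements of $D_*F(B)$, giving $\phi \simeq D_*\psi$ with $\psi$ the Yoneda transform of $g$.

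The main obstacle will be the interchange of $\ho\Lim_\Delta$ and $\ho\LLim_\alpha$ in the geometric reduction: this fails in general and relies crucially on the truncation properties of $n$-geometric hypergroupoids (bounded Postnikov height) to reduce the cosimplicial limit effectively to a finite one at each stage. A subsidiary technicality will be verifying that the naturality argument in the affine case recovers $\phi_B$ on \emph{all} of $D_*F(B)$ and not merely on elements factoring through some $iA \to B$; this should follow by a cofibrancy argument exploiting the explicit cosimplicial structure of $DB$ to express an arbitrary $f \co A \to \oR\Tot DB$ as a homotopy colimit of such factoring elements.
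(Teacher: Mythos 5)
Your high-level strategy (restrict along $i\co dg_+\Alg(R)\into DG^+dg_+\Alg(R)$, note $i^*D_*\simeq\id$, then reduce to affines via the geometric structure) is reasonable in spirit, but there are two genuine gaps, and neither is closed by the mechanisms you propose; the paper handles both points by quite different techniques. First, the interchange you flag as the main obstacle: you want to pass $\ho\Lim_{n\in\Delta}$ both past the filtered colimit over $\alpha$ and past the hypergroupoid (simplicial) realisation, invoking bounded Postnikov height. But $n$-geometric prestacks $F$ only have bounded homotopy on algebras concentrated in degree $0$; for $A=D^nB$ the homological support (and hence the Postnikov height of $F(D^nB)$) grows with $n$, so the $\Tot$-tower does not stabilise for that reason. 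The paper instead truncates the \emph{input} $B$ in the cochain direction, using that $D^mB$ depends only on $B^{\le m}$, so that $D_*F(B^{\le n})$ is governed by an essentially finite cosimplicial limit --- that is what commutes with the filtered colimit over $\alpha$. Commuting $\ho\Lim_\Delta$ past the hypergroupoid realisation is not attempted at all: instead, for sqc $F$ with hypergroupoid $X_\bt$, the paper uses that $D_*F$ is represented by the cosimplicial stacky DGAA $j\mapsto D^*O(X^{\Delta^j})$, together with the nontrivial reconstruction result $\oR\Spec^{nc} DD^*O(X^{\Delta^\bullet})\simeq X$ from the companion paper \cite{poisson}. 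Your proof never identifies this representing object, which is the key to the sqc case.

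Second, the affine Yoneda argument: having $i^*D_*F=F$ does not make $D_*F$ a representable presheaf on stacky DGAAs, so a natural transformation $\phi\co D_*F\to D_*G$ is not determined by $\phi_{iA}(\id_A)$; your subsidiary ``cofibrancy argument'' would essentially need to show that every point of $D_*F(B)=\map(A,\oR\Tot DB)$ is a homotopy colimit of points factoring through $iA\to B$, which is not true for a general stacky $B$ (the de Rham-type thickening is not built from ordinary DGAAs that way). The paper avoids this by working with the cosimplicial stacky DGAA representation of $D_*F$ and computing $\map(D_*F,D_*G)$ directly as an iterated homotopy limit, then collapsing the double-$\Delta$ diagram using the reconstruction lemma. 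In short, your decomposition identifies the right obstacles but neither the bounded-Postnikov justification for the interchange nor the Yoneda determination in the affine case would actually close.
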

\begin{proof}
If $F$ is sqc, then it is represented by a hypergroupoid $X_{\bt}$, and then $D_*F$ is represented by the cosimplicial stacky DGAA $j \mapsto D^*O(X^{\Delta^j})$, so we have
\begin{align*}
  \map_{\oL dg^+DG^+\Aff(R)^{\wedge}}(D_*F,D_*G) &\simeq \ho \Lim_{j \in \Delta} (D_*G)(D^*O(X^{\Delta^j}))\\
  &\simeq \ho \hskip -2.7ex \Lim_{(i,j)  \in \Delta\by \Delta} G(D^iD^*O(X^{\Delta^j}))\\
  &\simeq \map_{\oL DG^+\Aff(R)^{\wedge}}( \ho \hskip -2.7ex\LLim_{(i,j)  \in \Delta\by \Delta} \oR\Spec^{nc} D^iD^*O(X^{\Delta^j}),G),
\end{align*}
and the argument of \cite[Proposition \ref{poisson-replaceprop}]{poisson} gives a weak equivalence $\oR\Spec^{nc} DD^*O(X^{\Delta^{\bt}}) \simeq X$ of simplicial presheaves, from which the equivalence follows.

In general, we write $F$ as a filtered colimit $\LLim_{\alpha}F_{\alpha}$ of sqc  derived $n$-geometric    Artin  NC prestacks, and the key observation to make is that if $B$ is bounded in the cochain direction, then the homotopy limit defining $D_*F(B)$ is effectively finite, so $D_*F(B)\simeq \LLim_{\alpha}D_*F_{\alpha}(B)$. We then have
\begin{align*}
 \map(D_*F,D_*G)&\simeq \int_{B \in \oL DG^+dg_+\Alg(R)}^h \map_{s\Set}(D_*F(B),G(B))\\
 &\simeq \ho\Lim_n \int_{B \in \oL DG^+dg_+\Alg(R)}^h \map_{s\Set}(D_*F(B),G(B^{\le n}))\\
  &\simeq \ho\Lim_n \int_{B \in \oL DG^+dg_+\Alg(R)}^h \map_{s\Set}(D_*F(B^{\le n}),G(B^{\le n}))\\
  &\simeq\ho\Lim_n\ho\Lim_{\alpha}\int_{B \in \oL DG^+dg_+\Alg(R)}^h \map_{s\Set}(D_*F_{\alpha}(B^{\le n}),G(B^{\le n}))\\
  &\simeq\ho\Lim_n\ho\Lim_{\alpha} \map(F_{\alpha},G)\\
   & \simeq\map(F,G).\qedhere
\end{align*}
\end{proof}
 
The significance of  Corollary \ref{etsitecor} and Proposition \ref{replaceprop} arises when $G$ classifies some structure (for example the shifted pre-bisymplectic structures of \cite{NCpoisson}), with maps  from $F$ to $G$ then corresponding to the space of such structures on  $F$.  Proposition \ref{replaceprop} allows us to reinterpret this as a structure on $D_*F$, with Corollary \ref{etsitecor} allowing us to reduce further to the rigid functor $(D_*F)_{\rig}$. Consequently, the full functoriality of $G$ is not needed to formulate structures on $F$, since we only need homotopy \'etale functoriality of $D_*G$ on stacky DGAAs. This then permits comparison with structures (such as the shifted double Poisson structures of \cite{NCpoisson}) which are only functorial with respect to homotopy \'etale morphisms of stacky DGAAs.
 
\bibliographystyle{alphanum}
\bibliography{references.bib}

\end{document}